\begin{document}

\title{Subdifferential characterization of probability functions under Gaussian distribution\thanks{This work is partially supported by CONICYT grants: Fondecyt 1151003, Fondecyt 1150909, Basal PFB-03 and Basal FB0003, CONICYT-PCHA/doctorado Nacional/2014-21140621.  The second author thanks the Deutsche Forschungsgemeinschaft for their support within project B04 in the Sonderforschungsbereich / Transregio 154 Mathematical Modelling, Simulation and Optimization using the Example of Gas Networks.}
}

\titlerunning{Subdifferentials of probability functions}        

\author{Abderrahim Hantoute         \and
	Ren{\'e} Henrion \and
	Pedro P{\'e}rez-Aros 
}

\authorrunning{A. Hantoute, R. Henrion and P. P{\'e}rez-Aros} 

\institute{A. Hantoute  \at
	Center for Mathematical Modeling,
	Universidad de Chile, Santiago, Chile\\
	\email{ahantoute@dim.uchile.cl }
\and
R. Henrion \at
Weierstrass Institute for Applied Analysis and Stochastics, Berlin, Germany\\
\email{henrion@wias-berlin.de}
\and
P. P{\'e}rez-Aros \at
Department of Mathematical Engineering, Universidad de Chile, Santiago, Chile\\
\email{pperez@dim.uchile.cl}
}
	

\date{\bfseries{\today}}

\maketitle

\begin{abstract}
Probability functions figure prominently in optimization problems of engineering. They may be nonsmooth even if all input
data are smooth. This fact motivates the consideration of subdifferentials for such typically just continuous functions.  The aim of this paper is to provide
subdifferential formulae of such functions in the case of Gaussian distributions for possibly infinite-dimensional decision variables and nonsmooth (locally Lipschitzian) input data.
These formulae are based on the spheric-radial decomposition of Gaussian random vectors on the one hand and on a cone of directions of moderate growth on the
other. By  successively adding additional hypotheses, conditions are satisfied under which the probability function is locally Lipschitzian or even differentiable.

\keywords{probability functions, probabilistic constraint, stochastic optimization, multivariate
	Gaussian distribution, spheric-radial decomposition, Clarke subdifferential, Mordukhovich subdifferential}
\subclass{90C15 \and  90C30 \and 49J52 \and 49J53}
\end{abstract}

\section{Introduction}
\noindent The aim of this paper is to investigate subdifferential properties
of Gaussian probability functions induced by nonnecessarily smooth initial data. This topic
combines aspects of stochastic programming with arguments from variational
analysis, two areas which have been crucially influenced by the fundamental
work of Prof. Roger J-B Wets (see, e.g., \cite{rockwets}, \cite{wets} and
many other references). The motivation to study analytical properties of
probability functions comes from their importance in the context of
engineering problems affected by random parameters. They are at the core of
probabilistic programming (i.e., optimization problems subject to
probabilistic constraints) (e.g., \cite{prek}, \cite{shapdenrusz}) or of
reliability maximization (e.g., \cite{dit}). 

A probability function assigns to a control or decision variable the
probability that a certain random inequality system induced by this decision
variable be satisfied (see (\ref{probcons}) below). Since such functions are
typical constituents of optimization problems under uncertainty, it is
natural to ask for their analytical properties, first of all
differentiability. Roughly speaking, this can be guaranteed under three
assumptions: the differentiability of the input data, an appropriate
constraint qualification for the given random inequality system and the
compactness of the set of realizations of the random vector for the fixed
decision vector (e.g., \cite{kibur}, \cite{pflug}, \cite{uryas}). While the
first two assumptions are quite natural, the last one appears to be
restrictive in problems involving random vectors with unbounded support.
Failure of the compactness condition, however, may result in general in
nonsmoothness of the probability function despite the fact that all input
data are smooth and a standard constraint qualification is satisfied (see 
\cite[Prop. 2.2]{ackooij}). In order to keep the differentiability while
doing without the compactness assumption, one may restrict to special
distributions such as Gaussian or Gaussian-like as in \cite{ackooij}, \cite%
{ackooij2}. The working horse for deriving differentiability and gradient
formulae in these cases is the so-called \textit{spheric-radial decomposition%
} of Gaussian random vectors \cite[p. 29]{genz}. The resulting formulae for
the gradient of the probability function are represented - similar to the
formulae for the probability values themselves - as integrals over the unit
sphere with respect to the uniform measure. The latter can be efficiently
approximated by QMC methods tailored to this specific measure (e.g., \cite%
{brauch}). Such approach, by exploiting special properties of the
distribution, promises more efficiency in the solution of probabilistic
programs than general gradient formulae in terms of possibly complicated
surface or volume integrals. Successful applications of this methodology in
the context of probabilistic programming in gas network optimization is
demonstrated in \cite{gotzes}, \cite{grandon}. 

The aim of this paper is to substantially extend the earlier results in \cite%
{ackooij}, \cite{ackooij2} in two directions: first, decisions will be
allowed to be infinite-dimensional and second, the random inequality may be
just locally Lipschitzian rather than smooth. As the resulting probability
function can be expected to be continuous only (rather than locally
Lipschitzian or even smooth), appropriate tools (subdifferentials) from
variational analysis will be employed for an analytic characterization. 

We consider a probability function $\varphi :X\rightarrow \mathbb{R}$
defined by 
\begin{equation}
\varphi (x):=\mathbb{P}\left( g\left( x,\xi \right) \leq 0\right),  \label{probcons}
\end{equation}%
where $X$ is a Banach space, $g$ $:X\times \mathbb{R}^{m}\rightarrow \mathbb{%
	R}$ is a function depending on the realizations of an $m$-dimensional random
vector $\xi $.
Such probability functions are important in many optimization problems
dealing with reliability maximization or probabilistic constraints. The
latter one refers to an inequality $\varphi (x)\geq p$ constraining the set
of feasible decisions in an optimization problem, in order to guarantee that
the underlying random inequality $g\left( x,\xi \right) \leq 0$ is satisfied
under decision $x$ with probability at least $p\in (0,1]$, referred to as a 
a probability level (or safety level). Since we allow in
our paper the function $g$ to be locally Lipschitzian, there is no loss of
generality in considering a single random inequality only because in a
finite system of such inequalities one could pass to the maximum of
components.

Throughout the paper, we shall make the following basic assumptions on the
data of (\ref{probcons}):
%
%
$$
\begin{array}{cl}
& 1.\ X \text{ is a reflexive and separable Banach space}.\\
& 2.\ \text{ Function } g \text{ is locally Lipschitzian as a function of both arguments} \\
(H) 	\quad &  \, \quad \text{ simultaneously, and convex as a function of the second argument.}\\
& 3.\ \text{ The random vector } \xi \text{ is Gaussian of type } \xi \sim \mathcal{N}%
	\left( 0,R\right),  \text{ where } R \\ 
& \, \quad \text{ is a correlation matrix.}	
\end{array}
$$

\noindent A brief discussion of these assumptions is in order here:
reflexivity of $X$ is imposed in order to work with the limiting
(Mordukhovich) subdifferential as introduced in Definition \ref%
{subdifferentials} below (actually, one could consider the more general case
of Asplund spaces). The separability of $X$ is needed in order to make use
of an interchange formula for the limiting subdifferential and integration
sign (see Proposition \ref{keyresult} below). For the same reason, $g$ is required to
be locally Lipschitzian. As already mentioned above, considering just one
inequality rather than a system is no more restriction then. In particular,
the single inequality $g\left( x,z\right) \leq 0$ could represent a finite
or (compactly indexed) infinite system of smooth inequalities. Considering a
Gaussian random vector $\xi $ allows one to pass to a whole class of
Gaussian-like multivariate distributions (e.g., Student, Log-normal,
truncated Gaussian, $\chi ^{2}$ etc.) upon shifting their nonlinear
transformations to a Gaussian one into a modified function $\tilde{g}$
satisfying the same assumptions as required for $g$ here (e.g. \cite[Section
4.3]{ackooij}). Moreover, assuming a centered Gaussian distribution with
unit variances isn't a restriction either, because in the general case $\xi
\sim \mathcal{N}\left( \mu ,\Sigma \right) $, we may pass to the
standardized vector $\tilde{\xi}:=D(\xi -\mu )$, where D is the diagonal
matrix with elements $D_{ii}:=1/\sqrt{\Sigma _{ii}}$. Then, as required
above, $\tilde{\xi}\sim \mathcal{N}\left( 0,R\right) $, with $R$ being the
correlation matrix associated with $\Sigma $ and so%
\begin{equation*}
	\varphi (x)=\mathbb{P}\left( g\left( x,\xi \right) \leq 0\right) =\mathbb{P}%
	\left( \tilde{g}\left( x,\tilde{\xi}\right) \leq 0\right) ;\quad \tilde{g}%
	\left( x,z\right) :=g\left( x,D^{-1}z+\mu \right) .
\end{equation*}%
Clearly, $\tilde{g}$ is locally Lipschitzian and is convex in the second
argument if $g$ is so. Hence, there is no loss of generality in assuming
that $\xi \sim \mathcal{N}\left( 0,R\right) $ from the very beginning.

Our first observation is that our basic assumptions above do not guarantee the
continuity of $\varphi $ even if $g$ is continuously differentiable. A
simple two-dimensional example is given by $g(r,s):=r\cdot s$ (which is
convex in the second argument) and $\xi \sim \mathcal{N}\left( 0,1\right) $.
Then, $\varphi (r)=0.5$ for $r\neq 0$ and $\varphi (0)=1$. Since we want to
have the continuity as a minimum initial property of $\varphi $ in our analysis,
we will add the additional assumption that $g\left( \bar{x},0\right) <0$
holds true at a point of interest $\bar{x}$ (at which a subdifferential of $%
\varphi $ is computed). In other words, given the convexity of $g$ in the
second argument, zero is a Slater point for the inequality $g\left(
x,z\right) \leq 0$, $z\in  \mathbb{R}^m$. As shown in \cite[Proposition 3.11]{ackooij}, the opposite
case would entail that $\varphi (\bar{x})\leq 0.5$. Since one deals in
typical applications like probabilistic programming or reliability
maximization with probabilities close to one, it follows that the assumption 
$g\left( \bar{x},0\right) <0$ can be made without any practical loss of
generality.

The paper is organized as follows: In Section 3 and 4, we provide all the auxiliary results (continuity and partial subdifferential of the radial probability function) which are needed for the derivation of the main subdifferential formula presented in Section 5. This main result which is valid for general continuous probability functions will be specified then by adding additional hypotheses to the locally Lipschitzian and differentiable case. An application to probability functions induced by a finite system of smooth inequalities is given in Subsection \ref{secc2}.

\section{Preliminaries}

\subsection{Spheric-radial decomposition of Gaussian random vectors}

\noindent We recall the fact that any Gaussian random vector $\xi \sim 
\mathcal{N}\left( 0,R\right) $ has a so-called spheric-radial decomposition,
which means that the probability of $\xi $ taking values in an arbitrary
Borel subset $M$ of $\mathbb{R}^{m}$ can be represented as (e.g., \cite[p.
105]{deak}) 
\begin{equation*}
	\mathbb{P}\left( \xi \in M\right) =\int\limits_{v\in \mathbb{S}^{m-1}}\mu
	_{\eta }\left( \left\{ r\geq 0\ |\ rLv\in M\right\} \right) d\mu _{\zeta }(v),
\end{equation*}%
where $\mathbb{S}^{m-1}:=\left\{ v\in \mathbb{R}^{m}\ |\ \left\Vert v\right\Vert
^{2}=1\right\} $ denotes the unit sphere in $\mathbb{R}^{m}$, $\mu _{\eta }$
is the one-dimensional Chi-distribution with $m$ degrees of freedom, and $%
\mu _{\zeta }$ refers to the uniform distribution on $\mathbb{S}^{m-1}$.
Moreover, the (non-singular) matrix $L$ is supposed to be a factor in a decomposition $%
R=LL^{T}$ of the positive definite correlation matrix $R$ (e.g. Cholesky
decomposition).

The spheric-radial decomposition allows us to rewrite the probability
function (\ref{probcons}) in the form%
\begin{equation}
\varphi (x)=\int\limits_{\mathbb{S}^{m-1}}e(x,v)d\mu _{\zeta }(v)\quad
\forall x\in X,  \label{fidef}
\end{equation}%
where $e:X\times \mathbb{S}^{m-1}\rightarrow \mathbb{R}$ 
refers to the \textit{radial probability function} defined by
\begin{equation}
e(x,v):=\mu _{\eta }\left( \left\{ r\geq 0\ |\ g(x,rLv)\leq 0\right\} \right).  \label{edef}
\end{equation}%

With any $x\in X$ satisfying $g(x,0)<0$, we will associate the finite and
infinite directions defined respectively as%
\begin{eqnarray*}
	F(x) &:&=\{v\in \mathbb{S}^{m-1}\ |\ \exists r\geq 0:g(x,rLv)=0\}, \\
	I(x) &:&=\{v\in \mathbb{S}^{m-1}\ |\ \forall r\geq 0:g(x,rLv)<0\}.
\end{eqnarray*}%
It is easily observed that $F(x)\cap I(x)=\emptyset $ and that $F(x)\cup
I(x)=\mathbb{S}^{m-1}$ by continuity of $g$. Moreover, the number $r\geq 0$
satisfying $g(x,rLv)=0$ in the case of $v\in F(x)$ is uniquely defined, due to the
convexity of $g$ in the second argument. This leads us to define the
following radius function for any $x$ with $g(x,0)<0$ and any $v\in \mathbb{S%
}^{m-1}$: 
\begin{equation}
\rho \left( x,v\right) :=\left\{ 
\begin{tabular}{ll}
$r$ such that $g(x,rLv)=0$ & if $v\in F(x)$ \\ 
$+\infty $ & if $v\in I(x)$.%
\end{tabular}%
\right.  \label{rhodef}
\end{equation}%
This definition allows us to rewrite the radial probability function $e$
from (\ref{edef}) in the form%
\begin{equation}
e(x,v)=\mu _{\eta }\left( \left[ 0,\rho \left( x,v\right) \right] \right)
=F_{\eta }\left( \rho \left( x,v\right) \right)  \label{enewdef}
\end{equation}%
whenever $g(x,0)<0$. Here, $F_{\eta }$ refers to the distribution function of the
Chi-distribution with $m$ degrees of freedom, so that $F^{\prime}_{\eta }(t)=\chi (t)$,
where $\chi $ is the corresponding density:
	\begin{equation}
	\chi \left( t\right) :=Kt^{m-1}e^{-t^{2}/2}\quad \forall t\geq 0.
	\label{chidens}
	\end{equation}%
The second equation in  (\ref{enewdef}) follows
from $F_{\eta }(0)=0$. We formally put $F_{\eta }(\infty ):=1$ which
translates the limiting property 
$F_{\eta }(t)\rightarrow _{t\rightarrow+\infty }1$
of cumulative distribution functions.

\subsection{Notation and tools from variational analysis}

\noindent Our notation will be standard. By $X$ and $X^{\ast }$ we will
denote a real reflexive and separable Banach space and its dual, 
with corresponding norms $\|\ \|$ and $\|\ \|_*$, and with corresponding balls $\mathbb{B}%
_{r}\left( x\right) $, $\mathbb{B}_{r}^{\ast }\left( x^{\ast }\right) $ of
radius $r$ around $x\in X$ and $x^{\ast }\in X^{\ast }$. 
We denote by $\langle x, x^*\rangle$, $x\in X, x^*\in X^*$ 
the corresponding duality product, and by  
$\rightharpoonup$ the weak convergence in both $X$ and $X^*$.
The negative polar
of some closed cone $C\subseteq X$ is the closed convex cone 
\begin{equation*}
	C^{\ast }:=\left\{ x^{\ast }\in X^{\ast }\left\langle x^{\ast
	},h\right\rangle \leq 0\quad \forall h\in C\right\} .
\end{equation*}
The notations  \textrm{cl\thinspace }$C$, \textrm{cl}$^{\ast }C$, $\mathrm{%
co\,}C$, and $\overline{\mathrm{co}}\,C$ will refer to the (strong or norm) closure,
the weak$^{\ast }$ closure, the
convex hull, and the closed convex hull  of $C\subseteq X$ (or $C\subseteq X^*$), respectively. 

The indicator and the support functions of a set $C\subseteq X$ (or $C\subseteq X^*$) are respectively defined as
$$
i_C(x):=0 \text { if } x\in C \text{ and } +\infty \text { otherwise, }
$$
$$
\sigma_C(x^*):=\sup_{x\in C}\langle x, x^*\rangle.
$$

\begin{definition}
	\noindent Let $C\subseteq X$ be a closed
	subset. Then the Fr\'{e}chet, the Mordukhovich, and the Clarke normal cones to $C$ at $%
\bar{x}\in C$ are respectively defined as
	\begin{align*}
		&N^{F}(\bar{x};C) :=\left\{ x^{\ast }\in X^{\ast }\ |\ \underset{x\rightarrow 
			\bar{x},x\in C}{\lim \sup }\frac{\left\langle x^{\ast },x-\bar{x}%
			\right\rangle }{\left\Vert x-\bar{x}\right\Vert }\leq 0\right\},\\
		&N^{M}(\bar{x};C) :=\left\{ x^{\ast }\in X^{\ast }\ |\ \exists x_{n}\rightarrow 
		\bar{x},\,x_{n}\in C,\,\exists x_{n}^{\ast }{\rightharpoonup }%
		x^{\ast }:x_{n}^{\ast }\in N^{F}(C;x_{n})\right\}, \\
		&N^{C}(\bar{x};C) :=\overline{\mathrm{co}}\, N^{M}(\bar{x};C).
	\end{align*}
\end{definition}

\noindent We note that the definition of $N^{C}$ is not the original but a
derived one. The normal cones induce subdifferentials of functions $%
f:X\rightarrow \mathbb{R}$ via their epigraphs 
\begin{equation*}
	\mathrm{epi\,}f:=\left\{ \left( x,t\right) \in X\times \mathbb{R}\ |\ f(x)\leq
	t\right\},
\end{equation*}%
which are closed whenever $f$ is lower semicontinuous (lsc, for short).

\begin{definition}
	\label{subdifferentials}Let 
	$f:X\rightarrow \mathbb{R}$ be a lsc  function. Then the Fr\'{e}chet, the Mordukhovich (limiting), and the Clarke subdifferentials of $%
	f $ at $\bar{x}\in X$, are respectively defined as%
	\begin{equation*}
		\partial ^{F/M/C}f\left( \bar{x}\right) :=\left\{ x^{\ast }\in X^{\ast
		}\ |\ \left( x^{\ast },-1\right) \in N^{F/M/C}(\left( \bar{x},f(\bar{x})\right) ;%
		\mathrm{epi\,}f)\right\}.
	\end{equation*}%
 The singular subdifferential of $f$ at $\bar{x}$ is defined as%
	\begin{equation*}
		\partial ^{\infty }f\left( \bar{x}\right) =\left\{ x^{\ast }\in X^{\ast
		}\ |\ \left( x^{\ast },0\right) \in N^{M}(\left( \bar{x},f(\bar{x})\right) ;%
		\mathrm{epi\,}f)\right\} .
	\end{equation*}
\end{definition}

\noindent We recall that the Fr\'{e}chet subdifferential has the explicit
representation%
\begin{equation}
\partial^{F}f\left( \bar{x}\right) =\left\{ x^{\ast }\in X^{\ast
}\ |\ \liminf_{x\rightarrow \bar{x}}\frac{f(x)-f\left( \bar{x}\right)
	-\left\langle x^{\ast },x-\bar{x}\right\rangle }{\left\Vert x-\bar{x}%
	\right\Vert }\geq 0\right\} .  \label{fresubalter}
\end{equation}

\noindent In the current setting of reflexive Banach spaces, the following
representation holds true for Clarke's subdifferential \cite[Theorem 3.57]%
{mordu}:%
\begin{equation}
\partial ^{C}f\left( \bar{x}\right) =\overline{\mathrm{co}}\, \left\{
\partial ^{M}f\left( \bar{x}\right) +\partial ^{\infty }f\left( \bar{x}%
\right) \right\}.  \label{clarkesub}
\end{equation}%
For locally Lipschitzian functions, the following classical definition of
Clarke's subdifferential applies:

\begin{equation}
\partial ^{C}f\left( \bar{x}\right) =\left\{ x^{\ast }\in X^{\ast
}\ |\ \left\langle x^{\ast },h\right\rangle \leq f^{\circ}\left( \bar{x};h\right)
,\, \forall h\in X\right\} ,  \label{genderiv}
\end{equation}%
where%
\begin{equation*}
	f^{\circ} \left( \bar{x};h\right) :=\underset{x\rightarrow \bar{x},t\downarrow 0}%
	{\lim \sup }\frac{f\left( x+th\right) -f(x)}{t}
\end{equation*}%
denotes Clarke's directional derivative of $f$ at $\bar{x}$ in the direction $h$.

In case that $f$ happens to be convex, all the  subdifferentials above coincide 
with the ordinary subdifferential  in the sense of convex analysis:
$$
\partial f\left(\bar{x}\right): =\left\{ 
x^{\ast }\in X^{\ast
}\ |\ f(x)\ge f( \bar{x})+\left\langle x^{\ast },x- \bar{x}\right\rangle
,\, \forall x\in X\right\}. 
$$

For a function $f(x,y)$ of two variables, we will refer to its partial
subdifferentials at a point $\left( \bar{x},\bar{y}\right) $ as the
corresponding subdifferentials of the partial functions:%
\begin{equation*}
	\partial _{x}^{F/M/C}f\left( \bar{x},\bar{y}\right) :=\partial
	^{F/M/C}f\left( \cdot ,\bar{y}\right) \left( \bar{x}\right) ;\quad \partial
	_{y}^{F/M/C}f\left( \bar{x},\bar{y}\right) :=\partial ^{F/M/C}f\left( \bar{x}%
	,\cdot \right) \left( \bar{y}\right) .
\end{equation*}%

\section{Continuity properties}

\noindent In this section, we investigate continuous properties of the radial probability and the radius 
functions, defined respectively in (\ref{edef}) and  (\ref{rhodef}), which are the basis for deriving in
Section \ref{subdiffprob} subdifferential formulae for probability
function  (\ref{probcons}). 

For all the following results, the
basic assumption (H) formulated in the Introduction is tacitly required to
hold; namely, function $g$ is locally Lipschitzian as a function of both arguments simultaneously, 
and convex as a function of the second argument.

\begin{lemma}
	\label{rholem}Define $U:=\{x\in X\ |\ g(x,0)<0\}$.
	
	\begin{enumerate}
		\item The radius function $\rho $ is continuous at $\left( x,v\right) $ for any $x\in U$ and any 
		$v\in F(x)$.
		
		\item For $x\in U$ and $v\in I(x)$ it holds that $\lim\limits_{k\rightarrow
			\infty }\rho \left( x_{k},v_{k}\right) =\infty $ for any sequence $\left(
		x_{k},v_{k}\right) \rightarrow (x,v)$ such that $v_{k}\in F(x_{k})$.
	\end{enumerate}
\end{lemma}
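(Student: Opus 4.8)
The plan is to prove the two assertions separately, both relying on the implicit characterization of $\rho(x,v)$ via the equation $g(x,\rho(x,v)Lv)=0$ together with the Slater condition $g(x,0)<0$ and convexity of $g$ in the second argument. The basic geometric picture is: along the ray $r\mapsto g(x,rLv)$, the function starts strictly negative at $r=0$, is convex, and (if $v\in F(x)$) crosses zero exactly once; moreover because of convexity it is \emph{strictly increasing} past the crossing point, and indeed $g(x,rLv)\geq g(x,0)+r^{-1}\rho(x,v)\cdot(-g(x,0))\cdot(\text{something positive})$ — more simply, for $r>\rho(x,v)$ one has $g(x,rLv)>0$ bounded below by a positive quantity proportional to $(r-\rho(x,v))$. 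This one-sided strict separation is what will let me control $\rho$ under perturbation.

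\medskip

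\textbf{Part 1 (continuity at $(x,v)$ with $v\in F(x)$).} Fix such a point and set $\bar r:=\rho(x,v)<\infty$. I would take an arbitrary sequence $(x_k,v_k)\to(x,v)$ (note $x_k\in U$ for $k$ large since $U$ is open by continuity of $g$) and show $\rho(x_k,v_k)\to\bar r$. For the \textbf{lower bound on $\liminf$}: pick any $\varepsilon\in(0,\bar r)$; by the single-crossing/convexity argument $g(x,(\bar r-\varepsilon)Lv)<0$, and by joint continuity of $g$, $g(x_k,(\bar r-\varepsilon)Lv_k)<0$ for $k$ large, hence $v_k\in F(x_k)$ forces $\rho(x_k,v_k)>\bar r-\varepsilon$ (or $v_k\in I(x_k)$, in which case $\rho=+\infty$, also fine). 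For the \textbf{upper bound on $\limsup$}: here I use strict monotone growth past $\bar r$, i.e. $g(x,(\bar r+\varepsilon)Lv)=:\delta>0$; again by joint continuity $g(x_k,(\bar r+\varepsilon)Lv_k)>\delta/2>0$ for large $k$, so (since $g(x_k,0)<0$ and $g$ is continuous in $r$) the crossing point satisfies $\rho(x_k,v_k)<\bar r+\varepsilon$ — in particular $v_k\in F(x_k)$ automatically and $\rho(x_k,v_k)$ is finite. Combining, $\rho(x_k,v_k)\to\bar r$. The one nuisance is that the statement, as phrased, treats $(x,v)$ as interior to the domain of $\rho$; since $\rho$ is only defined on $U\times\mathbb S^{m-1}$ and $F(x_k)$ may differ from $F(x)$, the careful bookkeeping is to note that the upper-bound step \emph{proves} $v_k\in F(x_k)$ eventually, while in the lower-bound step the case $v_k\in I(x_k)$ is harmless.

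\medskip

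\textbf{Part 2 (blow-up at $v\in I(x)$).} Now $g(x,rLv)<0$ for \emph{all} $r\geq0$. I would argue by contradiction: suppose along some $(x_k,v_k)\to(x,v)$ with $v_k\in F(x_k)$ we had $\liminf_k\rho(x_k,v_k)=:c<\infty$. Pass to a subsequence with $\rho(x_k,v_k)\to c$; then $r_k Lv_k\to cLv$ where $r_k:=\rho(x_k,v_k)$, and by joint continuity $0=g(x_k,r_kLv_k)\to g(x,cLv)$. Hence $g(x,cLv)=0$, contradicting $v\in I(x)$. So $\liminf_k\rho(x_k,v_k)=\infty$, which is exactly the claim. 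This part is short; the only thing to keep straight is that $r_k$ is bounded along the chosen subsequence so that the product $r_k Lv_k$ converges.

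\medskip

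\textbf{Main obstacle.} The genuinely delicate point is the \emph{strict} one-sided growth used in the upper-bound half of Part 1 — namely that for $v\in F(x)$ and any $\varepsilon>0$ one has $g(x,(\bar r+\varepsilon)Lv)>0$ rather than merely $\geq 0$. This is where convexity of $g(x,\cdot)$ combined with $g(x,0)<0<$ (value at the crossing boundary behavior) is essential: a convex scalar function that is strictly negative at $0$ and equals $0$ at $\bar r$ cannot stay at $0$ afterwards, since that would force it to be $\leq 0$ on all of $[0,\bar r]$ and, by convexity, actually force a contradiction with strict negativity — more precisely, convexity gives $g(x,\bar r\,Lv)\le \tfrac{\bar r}{\bar r+\varepsilon}g(x,(\bar r+\varepsilon)Lv)+\tfrac{\varepsilon}{\bar r+\varepsilon}g(x,0)$, so $0\le g(x,(\bar r+\varepsilon)Lv)\cdot\tfrac{\bar r}{\bar r+\varepsilon}+g(x,0)\cdot\tfrac{\varepsilon}{\bar r+\varepsilon}$, and since $g(x,0)<0$ this yields $g(x,(\bar r+\varepsilon)Lv)>0$. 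Spelling out this inequality correctly (and its symmetric counterpart $g(x,(\bar r-\varepsilon)Lv)<0$) is the crux; everything else is a routine joint-continuity argument.
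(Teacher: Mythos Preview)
Your proof is correct. For Part~2 your argument is essentially identical to the paper's: assume a bounded subsequence, pass to the limit in the defining equation $g(x_{k_l},\rho(x_{k_l},v_{k_l})Lv_{k_l})=0$, and contradict $v\in I(x)$.

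For Part~1, however, you take a genuinely different route. The paper argues in two steps: first it shows that $\rho(x_k,v_k)$ is bounded by contradiction (an unbounded subsequence would force $g(x,rLv)\le 0$ for all $r\ge 0$, and then convexity plus $g(x,0)<0$ upgrades this to $g(x,rLv)<0$ for all $r$, whence $v\in I(x)$); second, it shows that every convergent subsequence has limit $\rho(x,v)$ by passing to the limit in the equation $g(x_{k_l},\rho(x_{k_l},v_{k_l})Lv_{k_l})=0$ and invoking uniqueness of the zero. You instead sandwich $\rho(x_k,v_k)$ directly between $\bar r-\varepsilon$ and $\bar r+\varepsilon$ by exploiting the \emph{strict} sign of $r\mapsto g(x,rLv)$ on either side of the crossing, which you correctly extract from the convexity inequality
\[
0=g(x,\bar r Lv)\le \tfrac{\bar r}{\bar r+\varepsilon}\,g(x,(\bar r+\varepsilon)Lv)+\tfrac{\varepsilon}{\bar r+\varepsilon}\,g(x,0)
\]
together with $g(x,0)<0$. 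Your approach is more hands-on and delivers $v_k\in F(x_k)$ eventually as an automatic by-product of the upper-bound step; the paper's subsequence argument is slightly more economical in that it never needs to quantify the strict growth past $\bar r$, relying only on the defining equation and uniqueness of the root. Both are perfectly valid.
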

\begin{proof}
	Observe first, that $\rho $ is defined (possibly extended-valued) on $%
	U\times \mathbb{S}^{m-1}$. To verify 1., consider any sequence $\left(
	x_{k},v_{k}\right) \rightarrow _{k}\left( x,v\right) $ with $v_{k}\in 
	\mathbb{S}^{m-1}$. We show first that the sequence $\rho \left(
	x_{k},v_{k}\right) $ is bounded. Indeed, otherwise there would exist a
	subsequence with $\rho \left( x_{k_{l}},v_{k_{l}}\right) \rightarrow
	_{l}\infty $. Clearly $g(x_{k_{l}},0)<0$ for $l$ large enough, because of $%
	g(x,0)<0$. Fix an arbitrary $r\geq 0$. Then $\rho \left(
	x_{k_{l}},v_{k_{l}}\right) >r$. We claim that $g(x_{k_{l}},rLv_{k_{l}})<0$
	for these $l$'s. This is obvious in case that $v_{k_{l}}\in I(x_{k_{l}})$. If $%
	v_{k_{l}}\in F(x_{k_{l}})$, then the relations%
	\begin{equation*}
		g(x_{k_{l}},0)<0,\quad g(x_{k_{l}},\rho \left( x_{k_{l}},v_{k_{l}}\right)
		Lv_{k_{l}})=0,\quad \rho \left( x_{k_{l}},v_{k_{l}}\right) >r,
			\end{equation*}%
	and 
	$$
	g(x_{k_{l}},rLv_{k_{l}})\geq 0,
	$$
	would contradict the convexity of $g$ in the second argument. Hence, 
	for $l$ sufficiently large, 
	$$
	g(x_{k_{l}},rLv_{k_{l}})<0,
	$$
	and passing to the
	limit yields that $g(x,rLv)\leq 0$, which holds true for all $r\geq 0$
	because the latter was chosen arbitrary. But then, $g(x,rLv)<0$ for all $%
	r\geq 0$, because otherwise once more a contradiction with convexity of $g$
	in the second argument would arise from $g(x,0)<0$. This, however, amounts
	to $v\in I(x)$ contradicting our assumption $v\in F(x)$. Summarizing, we
	have shown that $\rho \left( x_{k},v_{k}\right) $ is bounded and, in
	particular, $v_{k}\in F(x_{k})$ for all $k$. Let $\rho \left(
	x_{k_{l}},v_{k_{l}}\right) \rightarrow _{l}r_0$ be an arbitrary convergent
	subsequence. Then, we may pass to the limit in the relation $g\left(
	x_{k_{l}},\rho \left( x_{k_{l}},v_{k_{l}}\right) Lv_{k_{l}}\right) =0$ in
	order to derive that $g\left( x,r_0Lv\right) =0$, which in turn implies that $%
	r_0=\rho \left( x,v\right) $. Hence, all convergent subsequences of $\rho
	\left( x_{k},v_{k}\right) $ have the same limit $\rho \left( x,v\right) $.
	This implies that $\rho \left( x_{k},v_{k}\right) \rightarrow _{k}\rho
	\left( x,v\right) $ and altogether that $\rho $ is continuous at $\left(
	x,v\right) $.
	
	As for 2., observe that if $\rho \left( x_{k},v_{k}\right) $ would not tend
	to infinity, then there would exist a converging subsequence $\rho \left(
	x_{k_{l}},v_{k_{l}}\right) \rightarrow _{l}r_1$ for some $r_1\geq 0$. Since $%
	\rho \left( x_{k_{l}},v_{k_{l}}\right) <\infty $ and $g(x_{k_{l}},0)<0$ for $%
	l$ large enough, we infer that $v_{k_{l}}\in F(x_{k_{l}})$ and, hence, $%
	g(x_{k_{l}},\rho \left( x_{k_{l}},v_{k_{l}}\right) Lv_{k_{l}})=0$ for all
	these $l$'s. Now, passing to the limit yields that $g(x,r_1Lv)=0$, whence $v\in
	F(x)$, a contradiction.
\end{proof}

\begin{lemma}
	\label{opendom}If $g\left( x,0\right) <0$ and $v\in F(x)$, then there exist
	neighborhoods $U$ and $V$ of $x$ and $v$, respectively, such that $%
	v^{\prime }\in F(x^{\prime })$ for all $x^{\prime }\in U$ and $v^{\prime
	}\in V\cap \mathbb{S}^{m-1}$.
\end{lemma}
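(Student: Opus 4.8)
The plan is to produce, for every $x'$ close to $x$ and every $v'$ close to $v$ on the sphere, an explicit scalar $r'\ge 0$ with $g(x',r'Lv')=0$ by an intermediate value argument applied to the continuous univariate map $r\mapsto g(x',rLv')$. The key preliminary observation, which is what makes the argument stable under perturbation, is that the convex function $r\mapsto g(x,rLv)$ does not merely vanish at $\bar r:=\rho(x,v)$ but becomes \emph{strictly positive} beyond that point.

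Concretely, I would first fix $\bar r:=\rho(x,v)$, which is well defined and strictly positive because $g(x,0)<0$ and $v\in F(x)$, so that $g(x,\bar rLv)=0$. Pick any $r_1>\bar r$. Since $\bar r=\lambda\cdot 0+(1-\lambda)r_1$ for some $\lambda\in(0,1)$, convexity of $g$ in the second argument gives $0=g(x,\bar rLv)\le \lambda g(x,0)+(1-\lambda)g(x,r_1Lv)$, and since $g(x,0)<0$ this forces $g(x,r_1Lv)>0$.

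Next I would invoke the joint continuity of $g$ guaranteed by hypothesis (H). The map $x'\mapsto g(x',0)$ is continuous and negative at $x$, and the map $(x',v')\mapsto g(x',r_1Lv')$ is continuous and positive at $(x,v)$; hence there are neighborhoods $U$ of $x$ and $V$ of $v$ with $g(x',0)<0$ and $g(x',r_1Lv')>0$ for all $x'\in U$ and all $v'\in V\cap\mathbb{S}^{m-1}$. For any such pair, $r\mapsto g(x',rLv')$ is continuous on $[0,r_1]$, negative at $r=0$ and positive at $r=r_1$, so the intermediate value theorem produces some $r'\in(0,r_1)$ with $g(x',r'Lv')=0$, i.e. $v'\in F(x')$, as desired.

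There is no real obstacle here; the only step that needs a moment's care is the convexity argument upgrading ``$g(x,\cdot)$ vanishes at $\bar r$'' to ``$g(x,\cdot)>0$ past $\bar r$'', since it is precisely the strict inequality $g(x,r_1Lv)>0$ (and not merely $\ge 0$) that survives small perturbations of $x$ and $v$ and can therefore feed the intermediate value theorem.
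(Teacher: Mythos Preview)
Your proof is correct but proceeds by a genuinely different route than the paper's. The paper argues by contradiction: assuming the conclusion fails gives a sequence $(x_k,v_k)\to(x,v)$ with $v_k\in I(x_k)$, hence $\rho(x_k,v_k)=\infty$; since $v\in F(x)$, part~1 of Lemma~\ref{rholem} says $\rho$ is continuous at $(x,v)$, forcing $\rho(x,v)=\infty$, a contradiction. Thus the paper leans on the already-established continuity of $\rho$.

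Your argument is instead direct and self-contained: you exploit convexity in the second variable to upgrade $g(x,\bar r Lv)=0$ to $g(x,r_1Lv)>0$ for some $r_1>\bar r$, then use joint continuity of $g$ to make the sign pattern $g(x',0)<0<g(x',r_1Lv')$ persist on a neighborhood, and finish with the intermediate value theorem. This avoids invoking Lemma~\ref{rholem} altogether and is slightly more constructive (you exhibit the zero rather than merely asserting its existence by contradiction). The paper's version is shorter precisely because it recycles the machinery of Lemma~\ref{rholem}; yours would work just as well even if that lemma had not been proved first. The convexity step you flag as ``the only point needing care'' is indeed the heart of the matter and is handled correctly.
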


\begin{proof}
	If the statement wasn't true, then there existed a sequence $\left(
	x_{k},v_{k}\right) \rightarrow (x,v)$ with $g\left( x_{k},0\right) <0$, $%
	v_{k}\in \mathbb{S}^{m-1}$ and $v_{k}\in I\left( x_{k}\right) $. Hence, $%
	\rho \left( x_{k},v_{k}\right) =\infty $ and so $\rho \left( x,v\right)
	=\infty $ by 1. in Lemma \ref{rholem}. This yields the contradiction $v\in
	I(x)$.
\end{proof}

\begin{lemma}
	\label{lowest}Let $x\in X$ and $r\geq 0$ be such that 
         $g(x,0)<0$ and $g(x,rLv)=0$. Then%
	\begin{equation*}
		\left\langle z^{\ast },Lv\right\rangle \geq -\frac{g(x,0)}{r}>0\quad \forall
		z^{\ast }\in \partial _z g\left( x,rLv\right).
	\end{equation*}%
	\end{lemma}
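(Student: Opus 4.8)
\textbf{Proof plan for Lemma \ref{lowest}.}
The plan is to exploit the convexity of $g$ in its second argument together with the two scalar values we know along the ray $t\mapsto g(x,tLv)$: namely $g(x,0)<0$ at $t=0$ and $g(x,rLv)=0$ at $t=r>0$ (note $r>0$ necessarily, since $g(x,0)<0$ forbids $r=0$). First I would fix an arbitrary subgradient $z^\ast\in\partial_z g(x,rLv)$. By the subgradient inequality for the convex function $z\mapsto g(x,z)$ applied at the base point $rLv$ and tested against the point $0\in\mathbb{R}^m$, one gets
\begin{equation*}
g(x,0)\;\ge\;g(x,rLv)+\langle z^\ast,0-rLv\rangle\;=\;-r\,\langle z^\ast,Lv\rangle,
\end{equation*}
using $g(x,rLv)=0$. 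Here the duality pairing is just the Euclidean inner product on $\mathbb{R}^m$, and $\partial_z$ denotes the convex subdifferential of $g(x,\cdot)$, which coincides with all the subdifferentials of the locally Lipschitzian convex partial function by the discussion following \eqref{genderiv}.

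Rearranging the displayed inequality and dividing by $r>0$ yields
\begin{equation*}
\langle z^\ast,Lv\rangle\;\ge\;-\frac{g(x,0)}{r},
\end{equation*}
and since $g(x,0)<0$ the right-hand side is strictly positive, which is exactly the asserted chain $\langle z^\ast,Lv\rangle\ge -g(x,0)/r>0$. Because $z^\ast$ was arbitrary in $\partial_z g(x,rLv)$, the bound holds uniformly over that set.

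I do not anticipate a serious obstacle here; the only points that need a word of care are (i) recording that $r>0$ so that the division is legitimate, and (ii) making explicit that $\partial_z g(x,rLv)$ really is the convex subdifferential of the partial function $g(x,\cdot)$ — which is guaranteed by assumption (H), since $g$ is convex in the second variable, and for convex functions the Mordukhovich, Clarke, Fréchet and convex subdifferentials all agree, so the notation $\partial_z$ is unambiguous. Everything else is a one-line application of the subgradient inequality.
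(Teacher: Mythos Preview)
Your proof is correct and uses essentially the same idea as the paper --- the convex subgradient inequality for $g(x,\cdot)$ at the base point $rLv$ --- though you apply it more directly by testing against $z=0$, whereas the paper detours through the midpoint $\tfrac{r}{2}Lv$ and then invokes convexity once more. Both routes yield the identical bound $\langle z^\ast,Lv\rangle\ge -g(x,0)/r$, and your observation that $r>0$ is forced by $g(x,0)<0$ matches the paper's concluding remark.
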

	
\begin{proof}
	By convexity of $g$ in the second variable and by definition of the convex
	subdifferential, one has that 
	\begin{eqnarray*}
		-\frac{r}{2}\left\langle z^{\ast },Lv\right\rangle &=&\left\langle z^{\ast },%
		\frac{r}{2}Lv-rLv\right\rangle \leq g\left( x,\frac{r}{2}Lv\right) -g\left(
		x,rLv\right) \\
		&=&g\left( x,\frac{r}{2}Lv\right) \leq \frac{1}{2}g\left( x,0\right) +\frac{1%
		}{2}g\left( x,rLv\right) =\frac{1}{2}g\left( x,0\right) .
	\end{eqnarray*}%
	Since our assumptions imply that $r>0$, the assertion follows.
\end{proof}

We get in the following proposition the desired continuity of the radial probability
function $e$ defined in (\ref{edef}).
\begin{proposition}
	\label{econt}The radial probability function is
	continuous at any $\left( x,v\right) \in X\times \mathbb{S}^{m-1}$ with $%
	g(x,0)<0$.
\end{proposition}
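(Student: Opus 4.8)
The plan is to build everything on the representation $e(x,v)=F_\eta(\rho(x,v))$ furnished by (\ref{enewdef}), which is valid precisely under the hypothesis $g(x,0)<0$. Thus the continuity of $e$ at such a point will be reduced to the continuity behaviour of the radius function $\rho$ recorded in Lemma \ref{rholem}, combined with the elementary fact that $F_\eta$ is continuous and nondecreasing on $[0,\infty)$ with $F_\eta(t)\to 1$ as $t\to+\infty$; this last property is exactly what legitimizes the convention $F_\eta(\infty):=1$ and makes $F_\eta$ a continuous map on the one-point compactification $[0,+\infty]$.

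So fix $(x,v)\in X\times\mathbb{S}^{m-1}$ with $g(x,0)<0$ and take an arbitrary sequence $(x_k,v_k)\to(x,v)$ with $v_k\in\mathbb{S}^{m-1}$. Since $g(x,0)<0$, we have $g(x_k,0)<0$ for all large $k$, so $\rho(x_k,v_k)$ is well defined (possibly $+\infty$) and $e(x_k,v_k)=F_\eta(\rho(x_k,v_k))$ for those $k$. Now distinguish the two cases $v\in F(x)$ and $v\in I(x)$ (recall $F(x)\cup I(x)=\mathbb{S}^{m-1}$). If $v\in F(x)$, part 1 of Lemma \ref{rholem} gives $\rho(x_k,v_k)\to\rho(x,v)<\infty$, and continuity of $F_\eta$ on $[0,\infty)$ yields $e(x_k,v_k)\to F_\eta(\rho(x,v))=e(x,v)$.

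If $v\in I(x)$, then $\rho(x,v)=+\infty$ and $e(x,v)=F_\eta(+\infty)=1$, so it suffices to prove $\rho(x_k,v_k)\to+\infty$. I argue by contradiction: if this fails, some subsequence obeys $\rho(x_{k_l},v_{k_l})\le C<\infty$ for all $l$; in particular $\rho(x_{k_l},v_{k_l})<\infty$, i.e.\ $v_{k_l}\in F(x_{k_l})$, and then part 2 of Lemma \ref{rholem} applied to this subsequence forces $\rho(x_{k_l},v_{k_l})\to\infty$, a contradiction. Hence $\rho(x_k,v_k)\to+\infty$, and $F_\eta(t)\to 1$ gives $e(x_k,v_k)\to 1=e(x,v)$.

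The only genuinely delicate point is this second case: in a neighbourhood of an infinite direction the radius may oscillate between finite values (on the sets $F(x_k)$) and $+\infty$ (on the sets $I(x_k)$), so one cannot directly invoke a continuity statement for $\rho$. The remedy is to pass to a subsequence and to observe that, as soon as the radius stays finite along it, Lemma \ref{rholem}.2 pushes it back to $+\infty$; coupling this with the extended continuity of $F_\eta$ at $+\infty$ closes the argument.
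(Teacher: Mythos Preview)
Your proof is correct and follows essentially the same route as the paper: the representation $e=F_\eta\circ\rho$, Lemma~\ref{rholem}.1 for the case $v\in F(x)$, and Lemma~\ref{rholem}.2 together with $F_\eta(t)\to 1$ for the case $v\in I(x)$. The only cosmetic difference is that in the second case you argue directly that $\rho(x_k,v_k)\to+\infty$ via a bounded subsequence, whereas the paper assumes $e(x_k,v_k)\not\to 1$ and derives the same contradiction; the two arguments are interchangeable.
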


\begin{proof}
	Fix a point $\left( x,v\right) \in X\times \mathbb{S}^{m-1}$ with $%
	g(x,0)<0$. Consider any sequence $\left( x_{k},v_{k}\right) \rightarrow (x,v)$ with $%
	v_{k}\in \mathbb{S}^{m-1}$ and assume first that $v\in F(x)$. Then, $\rho
	\left( x_{k},v_{k}\right) \rightarrow _{k}\rho \left( x,v\right) $ by 1. in
	Lemma \ref{rholem}, and $v_{k}\in F(x_{k})$ for $k$
	large, by Lemma \ref{opendom}. Hence, by (\ref{enewdef}) it follows that 
	\begin{equation*}
		e\left( x_{k},v_{k}\right) =F_{\eta }\left( \rho (x_{k},v_{k})\right)
		\rightarrow _{k}F_{\eta }(\rho \left( x,v\right) )=e\left( x,v\right) ,
	\end{equation*}%
	where the convergence follows from the continuity of the Chi-distribution
	function $F_{\eta }$.
	
	If in contrast $v\in I(x)$, then, by (\ref{edef}), $e\left( x,v\right) =\mu
	_{\eta }\left( \mathbb{R}_{+}\right) =1$. We'll be done if we can show that $%
	e\left( x_{k},v_{k}\right) \rightarrow _{k}1$. If this did not hold true,
	then there would exist a subsequence and some $\varepsilon >0$ such that 
	\begin{equation}
	\left\vert e\left( x_{k_{l}},v_{k_{l}}\right) -1\right\vert >\varepsilon
	\quad \forall \ l\text{.}  \label{subseq}
	\end{equation}%
	Since $v_{k_{l}}\in I\left( x_{k_{l}}\right) $ would imply as above that $%
	e\left( x_{k_{l}},v_{k_{l}}\right) =\mu _{\eta }\left( \mathbb{R}_{+}\right)
	=1$, a contradiction, we conclude that $v_{k_{l}}\in F\left(
	x_{k_{l}}\right) $ for all $l$. Now, 2. in Lemma \ref{rholem} guarantees
	that $\rho \left( x_{k_{l}},v_{k_{l}}\right) \rightarrow _{l}\infty $. Then,
	by (\ref{enewdef}), we arrive at the convergence 
	\begin{equation*}
		e\left( x_{k_{l}},v_{k_{l}}\right) =F_{\eta }\left( \rho
		(x_{k_{l}},v_{k_{l}})\right) \rightarrow _{l}1,
	\end{equation*}%
	where we exploited the property $\lim\limits_{t\rightarrow \infty }F_{\eta
	}\left( t\right) =1$, following from $F_{\eta }$ being a cumulative
	distribution function. This is a contradiction with (%
	\ref{subseq}), and the desired conclusion follows.
\end{proof}
Consequently, we obtain the continuity of the probability function $\varphi $,
defined in (\ref{probcons}).
\begin{theorem}
	The probability function is continuous at any point $x\in X$ with $g(x,0)<0$.
\end{theorem}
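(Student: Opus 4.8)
The plan is to derive the continuity of $\varphi$ at a point $\bar x\in X$ with $g(\bar x,0)<0$ directly from the integral representation (\ref{fidef}) together with the pointwise continuity of the radial probability function $e$ established in Proposition \ref{econt}. Concretely, let $x_k\to\bar x$ in $X$; since $U=\{x\in X\ |\ g(x,0)<0\}$ is open (by continuity of $g$), we have $x_k\in U$ for all large $k$, so $e(x_k,\cdot)$ is well defined on $\mathbb S^{m-1}$. By Proposition \ref{econt}, for every fixed $v\in\mathbb S^{m-1}$ we have $e(x_k,v)\to e(\bar x,v)$ as $k\to\infty$. Moreover $0\le e(x,v)\le 1$ for all $(x,v)$, since $e(x,v)=\mu_\eta(\{r\ge 0\ |\ g(x,rLv)\le 0\})$ is the $\mu_\eta$-measure of a subset of $\mathbb R_+$ and $\mu_\eta$ is a probability measure. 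Hence the constant function $1$ is an integrable dominating function on the probability space $(\mathbb S^{m-1},\mu_\zeta)$.

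First I would invoke the Lebesgue dominated convergence theorem: the sequence of measurable functions $v\mapsto e(x_k,v)$ converges pointwise $\mu_\zeta$-a.e. (in fact everywhere) to $v\mapsto e(\bar x,v)$ and is uniformly bounded by $1\in L^1(\mu_\zeta)$, so
\begin{equation*}
	\varphi(x_k)=\int_{\mathbb S^{m-1}}e(x_k,v)\,d\mu_\zeta(v)\ \longrightarrow\ \int_{\mathbb S^{m-1}}e(\bar x,v)\,d\mu_\zeta(v)=\varphi(\bar x).
\end{equation*}
Since the sequence $x_k\to\bar x$ was arbitrary, sequential continuity gives continuity of $\varphi$ at $\bar x$; and as $\bar x$ was an arbitrary point of $U$, the claim follows. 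One should also remark, for completeness, that $v\mapsto e(x,v)$ is $\mu_\zeta$-measurable for each fixed $x\in U$ — this is immediate from (\ref{enewdef}), since $e(x,\cdot)=F_\eta\circ\rho(x,\cdot)$ with $F_\eta$ continuous and $\rho(x,\cdot)$ measurable (it is continuous on the open set $F(x)$ by Lemma \ref{rholem}.1 and constantly $+\infty$ on $I(x)$), or alternatively from the fact that the set $\{(v,r)\ |\ g(x,rLv)\le 0\}$ is Borel.

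There is essentially no hard obstacle here: the only point requiring a little care is that Proposition \ref{econt} gives joint continuity of $e$ at $(\bar x,v)$ but we only need the map $v\mapsto e(x_k,v)\to e(\bar x,v)$ pointwise in $v$, which follows a fortiori by taking the particular sequence $(x_k,v)\to(\bar x,v)$. The uniform bound $e\le 1$ is what allows us to bypass any uniform-continuity or equicontinuity argument and conclude via dominated convergence on the finite measure space $\mathbb S^{m-1}$.
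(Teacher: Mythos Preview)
Your proof is correct and follows essentially the same approach as the paper: pointwise convergence of $e(x_k,\cdot)\to e(\bar x,\cdot)$ from Proposition~\ref{econt}, the uniform bound $e\le 1$, and Lebesgue's dominated convergence theorem on the finite measure space $(\mathbb S^{m-1},\mu_\zeta)$. Your additional remarks on measurability and on $U$ being open are useful clarifications that the paper leaves implicit.
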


\begin{proof}
	For any sequence $x_{n}\rightarrow x$ one has by Proposition \ref{econt} that%
	\begin{equation*}
		e\left( x_{n},v\right) \rightarrow _{n}e\left( x,v\right) \leq 1\quad
		\forall v\in \mathbb{S}^{m-1},
	\end{equation*}%
	where the inequality follows from $e$ being a probability. Since the constant function 
	$1$ is integrable on $\mathbb{S}^{m-1}$, the assertion follows from Lebesgue's
	dominated convergence theorem.
\end{proof}

\section{Subdifferential of the radial probability function}

\noindent In this section, we provide characterizations of the Fr\'{e}chet
subdifferential of the radial probability function $e\left( \cdot
,v\right) $, defined in (\ref{edef}), for arbitrarily fixed directions $v\in 
\mathbb{S}^{m-1}$.  As before, we also consider in this section
our standard assumption (H).


\

We need first to estimate the set $ \partial _{x}^{F}\rho (x,v)$:
\begin{proposition}
	\label{frechetrho} Let $x\in X$ with $g(x,0)<0\,$and $\,v\in F(x)$ be
	arbitrary. Then, for every $y^{\ast }\in \partial _{x}^{F}\rho (x,v)$ and
	every $w\in X$, there exist $x^{\ast }\in \partial _{x}^{C}g(x,\rho (x,v)Lv)$
	and $z^{\ast }\in \partial_z g\left( x,\rho (x,v)Lv\right) $ such that 
	$\left\langle z^{\ast},Lv\right\rangle>0$ and 
	\begin{equation*}
		\left\langle y^{\ast },w\right\rangle \leq \frac{-1}{\left\langle z^{\ast
			},Lv\right\rangle }\left\langle x^{\ast },w\right\rangle .
	\end{equation*}
\end{proposition}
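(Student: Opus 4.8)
The plan is to exploit the defining relation $g(x,\rho(x,v)Lv)=0$ together with an implicit-function-type argument adapted to the nonsmooth, Fréchet-subdifferential setting. Fix $x\in U$ and $v\in F(x)$, write $r:=\rho(x,v)$ for brevity, and recall from Lemma \ref{lowest} that $\langle z^{\ast},Lv\rangle\ge -g(x,0)/r>0$ for every $z^{\ast}\in\partial_z g(x,rLv)$, so the claimed strict positivity is automatic once we have produced such a $z^{\ast}$. The substance of the statement is the inequality $\langle y^{\ast},w\rangle\le \tfrac{-1}{\langle z^{\ast},Lv\rangle}\langle x^{\ast},w\rangle$ for a suitable pair $(x^{\ast},z^{\ast})$.

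First I would take $y^{\ast}\in\partial_x^F\rho(x,v)$ and $w\in X$, and use the explicit Fréchet inequality (\ref{fresubalter}): for every $\varepsilon>0$ and all $x'$ near $x$,
\[
\rho(x',v)\ge \rho(x,v)+\langle y^{\ast},x'-x\rangle-\varepsilon\|x'-x\|.
\]
Specialize to the ray $x'=x+tw$, $t\downarrow 0$. Then $\rho(x+tw,v)\ge r+t\langle y^{\ast},w\rangle - t\varepsilon\|w\|+o(t)$. Since $g$ is convex in the second argument and $g(x',0)<0$ for $x'$ near $x$ (by $g(x,0)<0$), for $v\in F(x')$ the map $s\mapsto g(x',sLv)$ is nonpositive exactly on $[0,\rho(x',v)]$ and nondecreasing past its unique zero; hence $\rho(x+tw,v)\ge r+t\langle y^\ast,w\rangle - t\varepsilon\|w\| + o(t)$ translates into
\[
g\bigl(x+tw,\,(r+t\langle y^\ast,w\rangle - t\varepsilon\|w\| + o(t))Lv\bigr)\le 0 .
\]
Now I would estimate the left-hand side from below using a first-order expansion of $g$ at $(x,rLv)$. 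This is where nonsmoothness is handled: by local Lipschitzness of $g$ in both arguments and the sum/chain rules for Clarke's generalized directional derivative, one has, along the direction $(w,\langle y^\ast,w\rangle Lv)$,
\[
0\ \ge\ \limsup_{t\downarrow 0}\frac{g\bigl(x+tw,(r+t\langle y^\ast,w\rangle)Lv\bigr)-g(x,rLv)}{t}
\ \ge\ -\varepsilon C\|w\|\ +\ \text{(Clarke directional derivative term)},
\]
and the Clarke directional derivative $g^{\circ}\bigl((x,rLv);(w,\langle y^\ast,w\rangle Lv)\bigr)$ is, by (\ref{genderiv}) applied to the joint function together with the product structure of the Clarke subdifferential in a product space, bounded by $\sup\{\langle x^{\ast},w\rangle + \langle y^\ast,w\rangle\langle z^{\ast},Lv\rangle\}$ over $(x^{\ast},z^{\ast})\in\partial^C g(x,rLv)\subseteq\partial_x^C g(x,rLv)\times\partial_z g(x,rLv)$ (using that $\partial_z g=\partial^C_z g$ by convexity in $z$). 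Letting $\varepsilon\downarrow 0$ and using compactness of $\partial^C g(x,rLv)$ (finite-dimensional in the $z$-slot, weak$^*$-compact in the $x$-slot), I obtain a single pair $(x^{\ast},z^{\ast})$ attaining the supremum with
\[
0\ \ge\ \langle x^{\ast},w\rangle + \langle y^\ast,w\rangle\,\langle z^{\ast},Lv\rangle .
\]
Since $\langle z^{\ast},Lv\rangle>0$ by Lemma \ref{lowest}, dividing yields exactly $\langle y^{\ast},w\rangle\le \tfrac{-1}{\langle z^{\ast},Lv\rangle}\langle x^{\ast},w\rangle$, as claimed.

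The main obstacle I anticipate is making the "first-order expansion of $g$ along a perturbed ray" rigorous in the purely Lipschitzian case: one cannot differentiate $t\mapsto g(x+tw,(r+t\alpha)Lv)$, so I would instead bound $g(x+tw,(r+t\alpha)Lv)$ above and below by splitting into the $x$-increment and the $z$-increment and controlling each with Clarke's directional derivative and a Lebourg-type mean value inequality, taking care that the perturbation of the radius carries the $o(t)$ and $\varepsilon$ terms uniformly; a secondary point is justifying the product-space description of $\partial^C g(x,rLv)$ (it suffices to use the outer estimate $\partial^C g\subseteq \partial^C_x g\times\partial^C_z g$, which holds for Clarke subdifferentials) and the extraction of a single maximizing pair via compactness, for which reflexivity of $X$ and finite-dimensionality of the second argument are exactly what is needed.
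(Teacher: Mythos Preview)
Your argument rests on the inclusion $\partial^C g(x,rLv)\subseteq \partial_x^C g(x,rLv)\times\partial_z g(x,rLv)$, but this product estimate is \emph{false} for general locally Lipschitz functions; it requires Clarke regularity of the joint function, which is not part of (H). A two-line counterexample is $f(u,y)=|u-y|-|u+y|$ on $\mathbb{R}^2$: since $f(\cdot,0)\equiv 0$ one has $\partial_u^C f(0,0)=\{0\}$, yet $f^\circ\bigl((0,0);(1,0)\bigr)=2$, so the $u$-projection of $\partial^C f(0,0)$ is $[-2,2]$. In the present setting, convexity in $z$ does allow you to show that the second component of any $(x^\ast,z^\ast)\in\partial^C g(x,rLv)$ lies in $\partial_z g(x,rLv)$ (because $g^\circ\bigl((x,rLv);(0,k)\bigr)$ coincides with the convex directional derivative in $z$), but there is no mechanism forcing the first component into the \emph{partial} Clarke set $\partial_x^C g(x,rLv)$. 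Hence even after a clean Lebourg/compactness extraction on the joint function, the pair you produce satisfies the desired inequality only with $x^\ast$ in the $x$-projection of the joint $\partial^C g$, which is not what the proposition asserts. (A secondary issue: your displayed chain places the Clarke term as a \emph{lower} bound for the limsup of the specific difference quotient, whereas by definition $g^\circ$ is an upper bound for such quotients.)

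The fix is exactly the ``splitting'' you sketch in your final paragraph, and that is precisely the paper's proof: one never touches the joint subdifferential. For $t_n\downarrow 0$ pick $z_n^\ast\in\partial_z g(x{+}t_nw,\rho(x,v)Lv)$ and use the convex subgradient inequality in the $z$-slot to get
\[
\bigl(\rho(x{+}t_nw,v)-\rho(x,v)\bigr)\langle z_n^\ast,Lv\rangle \le -g(x{+}t_nw,\rho(x,v)Lv),
\]
then apply Lebourg to the \emph{partial} function $g(\cdot,\rho(x,v)Lv)$ alone to bound the right-hand side by $-t_n\langle x_n^\ast,w\rangle$ with $x_n^\ast\in\partial_x^C g(x{+}\tau_nt_nw,\rho(x,v)Lv)$. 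Extracting weak$^\ast$/strong limits and combining with $\langle y^\ast,w\rangle\le\liminf t_n^{-1}(\rho(x{+}t_nw,v)-\rho(x,v))$ yields the claim with $x^\ast$ genuinely in $\partial_x^C g$. In short: your fallback route is the correct proof; the joint-subdifferential argument in the main body cannot deliver $x^\ast\in\partial_x^C g$.
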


\begin{proof}
	Fix  $y^{\ast }\in \partial _{x}^{F}\rho (x,v)$ and $w\in X$; hence, $\rho
	(x,v)<\infty $ (because by assumption $v\in F(x)$). Let $M>0$ be a Lipschitz
	constant of $g$ at $\left( x,\rho (x,v)Lv\right)$. Then, there exists a
	neighborhood $U$ of $x$ such that the function $g(\cdot ,\rho (x,v)Lv)\,$is
	locally Lipschitzian with Lipschitz constant $M$ at each $x^{\prime }\in U$, and 
	such that the
	functions $g(x^{\prime },\cdot )$, $x^{\prime }\in U$, are locally Lipschitzian with 
	the same Lipschitz constant $M$ at $\rho (x,v)Lv$. As a consequence
	of \cite[Proposition 2.1.2]{clarke}, for all  $x^{\prime }\in U$ one has that 
	\begin{equation}
	\left\Vert x^{\ast }\right\Vert,\left\Vert z^{\ast }\right\Vert \leq M\quad
	\forall x^{\ast }\in \partial _{x}^{C}g(x^{\prime },\rho (x,v)Lv),\,\,\forall
	z^{\ast }\in \partial_z g(x^{\prime },\rho (x,v)Lv).  \label{boundedset}
	\end{equation}%
	Consider an arbitrary sequence $t_{n}\downarrow 0$ so that, by Lemma \ref%
	{opendom}, we may assume $v\in F(x+t_{n}w)$ for all $n$. By convexity and 
	continuity of the function $g$ with respect to the second variable, 
	the set $\partial g\left(
	x+t_{n}w,\cdot \right) (\rho (x,v)Lv)$ is nonempty for all $n$, and so we
	may select a sequence 
\begin{equation}
	z_{n}^{\ast }\in \partial_z g\left( x+t_{n}w,\cdot \right) (\rho (x,v)Lv);
	\label{convsub}
	\end{equation}%
	hence, taking into account, from the definition of function $\rho$,  that 	
	$g(x+t_{n}w,\rho (x+t_{n}w,v)Lv)=0$ and $g(x,\rho (x,v)Lv)=0$,
		\begin{align}
		\left( \rho (x+t_{n}w,v)-\rho (x,v)\right) \left\langle z_{n}^{\ast
		},Lv\right\rangle 
		&=\left\langle z_{n}^{\ast },\rho (x+t_{n}w,v)Lv-\rho (x,v)Lv\right\rangle\nonumber\\
		&\leq g(x+t_{n}w,\rho (x+t_{n}w,v)Lv)\nonumber\\
		&\qquad\qquad\qquad -g(x+t_{n}w,\rho (x,v)Lv)\nonumber\\
		&=-g(x+t_{n}w,\rho (x,v)Lv)\nonumber\\
		&= g(x,\rho (x,v)Lv)-g(x+t_{n}w,\rho (x,v)Lv)\label{oo}.
	\end{align}
	Next, Lebourg's mean value Theorem for Clarke's subdifferential \cite[Theorem
	2.3.7]{clarke} yields   some $\tau _{n}\in \left[ 0,1\right] $
	and
	\begin{equation}
	x_{n}^{\ast }\in \partial _{x}^{C}g(x+\tau _{n}t_{n}w,\rho (x,v)Lv)
	\label{meanvalue}
	\end{equation}%
	such that 
	\begin{equation}
	g(x,\rho (x,v)Lv)-g(x+t_{n}w,\rho (x,v)Lv) \leq -t_{n}\left\langle x_{n}^{\ast },w\right\rangle,
	\label{uppest3}
	\end{equation}%
and, consequently, from (\ref{oo}),
	\begin{equation}
	\left( \rho (x+t_{n}w,v)-\rho (x,v)\right) \left\langle z_{n}^{\ast
	},Lv\right\rangle \leq -t_{n}\left\langle x_{n}^{\ast },w\right\rangle .
	\label{uppest}
	\end{equation}%
	Since $X$ is reflexive and $\left\Vert z_{n}^{\ast }\right\Vert,\ \left\Vert x_{n}^{\ast }\right\Vert \leq M$, 
	by (\ref{boundedset}), there exists a
	subsequence $\left( x_{n_{k}}^{\ast },z_{n_{k}}^{\ast }\right) $ and some $%
	\left( x^{\ast },z^{\ast }\right) \in X\times \mathbb{R}^{m}$ such that $%
	x_{n_{k}}^{\ast }\rightharpoonup x^{\ast }$ and $z_{n_{k}}^{\ast
	}\rightarrow z^{\ast }$. The weak$^{\ast }$-closedness of the graph of
	Clarke's subdifferential \cite[Proposition 2.1.5]{clarke} along with (\ref%
	{meanvalue}) and (\ref{convsub}) implies that%
	\begin{equation}
	x^{\ast }\in \partial _{x}^{C}g(x,\rho (x,v)Lv),\ z^{\ast }\in \partial _{z}g\left( x,\rho (x,v)Lv\right).  \label{partclarkex}
	\end{equation}%
	Now, Lemma \ref{lowest} implies that%
	\begin{equation*}
		\left\langle z^{\ast },Lv\right\rangle \geq \frac{-g(x,0)}{\rho (x,v)}>0,
	\end{equation*}%
	and, so, by passing to the (inferior) limit in (\ref{uppest}), we arrive at%
	\begin{equation}
	 \left\langle z^{\ast },Lv\right\rangle\liminf_{n\rightarrow \infty }t_{n}^{-1}\left( \rho (x+t_{n}w,v)-\rho
	(x,v)\right) \leq -\left\langle
	x^{\ast },w\right\rangle .  \label{uppest2}
	\end{equation}%
	Therefore, since $y^{\ast }\in \partial _{x}^{F}\rho(x,v)$, 
	\begin{equation*}
	\left\langle y^{\ast },w\right\rangle \le \liminf_{n\rightarrow \infty
	}t_{n}^{-1}\left( \rho (x+t_{n}w,v)-\rho (x,v)\right) \le \frac{-1}{\left\langle z^{\ast },Lv\right\rangle}\left\langle x^{\ast },w\right\rangle,
	\end{equation*}%
	as we wanted to prove.
\end{proof}

Next, we give the desired estimate of the set $ \partial _{x}^{F}e (x,v)$. Recall that $\chi $ is the density of the one-dimensional Chi-distribution with $m$ degrees of freedom (see (\ref{chidens})).
\begin{theorem}
	\label{chainrule}Let $x\in X$ with $g(x,0)<0\,$and $\,v\in F(x)$ be
	arbitrary. Then, for every $y^{\ast }\in \partial _{x}^{F}e(x,v)$ and every $%
	w\in X$, there exist $x^{\ast }\in \partial _{x}^{C}g(x,\rho (x,v)Lv)$ and $%
	z^{\ast }\in \partial_z g\left( x,\rho (x,v)Lv\right) $ such that%
	\begin{equation*}
		\left\langle y^{\ast },w\right\rangle \leq \frac{-\chi \left( \rho
			(x,v)\right) }{\left\langle z^{\ast },Lv\right\rangle }\left\langle x^{\ast
		},w\right\rangle .
	\end{equation*}%
	Consequently,  if $M_{x,v}$ denotes a Lipschitz constant of  $g(\cdot ,\rho
	(x,v)Lv)$ at $x$, then 
	\begin{equation*}
		\left\Vert y^{\ast }\right\Vert \leq \frac{\rho (x,v)\cdot \chi \left( \rho
			(x,v)\right) }{\left\vert g(x,0)\right\vert }M_{x,v}\quad \forall y^{\ast
		}\in \partial _{x}^{F}e(x,v).
	\end{equation*}%
	\end{theorem}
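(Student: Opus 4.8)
The plan is to reduce the claim to Proposition~\ref{frechetrho} through a chain rule based on the identity $e(\cdot,v)=F_\eta\circ\rho(\cdot,v)$. Fix $x$ with $g(x,0)<0$ and $v\in F(x)$. By Lemma~\ref{opendom} there is a neighbourhood $U_0$ of $x$ on which $v\in F(x')$, whence by (\ref{enewdef}) $e(x',v)=F_\eta(\rho(x',v))$ for all $x'\in U_0$. Since $g(x,0)<0$ while $g(x,\rho(x,v)Lv)=0$, we must have $\rho(x,v)>0$; hence $\chi(\rho(x,v))>0$, and $F_\eta$ is continuously differentiable with $F_\eta'=\chi>0$ near $\rho(x,v)$. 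Moreover, since $\rho(\cdot,v)$ is continuous at $x$ (Lemma~\ref{rholem}, part~1), $\rho(x',v)$ remains in this neighbourhood for $x'$ close to $x$.

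The heart of the argument is the implication: $y^{\ast}\in\partial_x^F e(x,v)\ \Longrightarrow\ \chi(\rho(x,v))^{-1}y^{\ast}\in\partial_x^F\rho(x,v)$. I would prove it straight from (\ref{fresubalter}): for $x'$ near $x$ write $F_\eta(\rho(x',v))-F_\eta(\rho(x,v))=c(x')\big(\rho(x',v)-\rho(x,v)\big)$, where $c(x'):=\int_0^1\chi\big(\rho(x,v)+t(\rho(x',v)-\rho(x,v))\big)\,dt$. Continuity of $\rho(\cdot,v)$ and of $\chi$ give $c(x')\to\chi(\rho(x,v))>0$ as $x'\to x$. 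Substituting this into the Fréchet difference quotient of $e(\cdot,v)$ at $x$, dividing by $c(x')>0$, and absorbing the term $\langle y^{\ast},x'-x\rangle\big(1-c(x')/\chi(\rho(x,v))\big)=o(\|x'-x\|)$ shows that the $\liminf$ defining membership of $\chi(\rho(x,v))^{-1}y^{\ast}$ in $\partial_x^F\rho(x,v)$ is nonnegative (here one uses that $a_n\to a>0$ and $\liminf a_nb_n\ge0$ force $\liminf b_n\ge0$). This is the only delicate step, because $\rho(\cdot,v)$ is not known to be locally Lipschitzian; what saves the day is that $\chi$ is bounded away from $0$ near the positive number $\rho(x,v)$.

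With this established, the first assertion follows at once: given $y^{\ast}\in\partial_x^F e(x,v)$ and $w\in X$, apply Proposition~\ref{frechetrho} to $\chi(\rho(x,v))^{-1}y^{\ast}\in\partial_x^F\rho(x,v)$ and $w$ to obtain $x^{\ast}\in\partial_x^C g(x,\rho(x,v)Lv)$ and $z^{\ast}\in\partial_z g(x,\rho(x,v)Lv)$ with $\langle z^{\ast},Lv\rangle>0$ and $\langle\chi(\rho(x,v))^{-1}y^{\ast},w\rangle\le\frac{-1}{\langle z^{\ast},Lv\rangle}\langle x^{\ast},w\rangle$; multiplying by $\chi(\rho(x,v))>0$ gives the stated inequality.

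For the norm estimate, bound the right-hand side uniformly in $w$: from $-\langle x^{\ast},w\rangle\le\|x^{\ast}\|\,\|w\|$, the Clarke bound $\|x^{\ast}\|\le M_{x,v}$ (cf.\ (\ref{boundedset}) and \cite[Prop.~2.1.2]{clarke}), and Lemma~\ref{lowest}, which yields $\langle z^{\ast},Lv\rangle\ge -g(x,0)/\rho(x,v)=|g(x,0)|/\rho(x,v)$ and hence $\langle z^{\ast},Lv\rangle^{-1}\le\rho(x,v)/|g(x,0)|$, one obtains $\langle y^{\ast},w\rangle\le\frac{\rho(x,v)\chi(\rho(x,v))}{|g(x,0)|}M_{x,v}\|w\|$ for all $w\in X$; taking the supremum over $\|w\|\le1$ gives the claim. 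I expect the chain-rule step to be the only genuine obstacle; the remainder is direct substitution into Proposition~\ref{frechetrho} and Lemma~\ref{lowest}.
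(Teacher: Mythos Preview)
Your proof is correct and follows essentially the same route as the paper: write $e(\cdot,v)=F_\eta\circ\rho(\cdot,v)$ locally via Lemma~\ref{opendom}, pass from $\partial_x^F e(x,v)$ to $\chi(\rho(x,v))\,\partial_x^F\rho(x,v)$ by a chain rule, then invoke Proposition~\ref{frechetrho} and Lemma~\ref{lowest}. The only difference is that the paper obtains the identity $\partial_x^F e(x,v)=\chi(\rho(x,v))\,\partial_x^F\rho(x,v)$ by citing a ready-made chain rule for the Fr\'echet subdifferential of a composition with a smooth nondecreasing outer function (\cite[Corollary~1.14.1 and Proposition~1.11]{kruger}), whereas you establish the needed inclusion directly from the definition~(\ref{fresubalter}) via the mean-value factorization $c(x')\to\chi(\rho(x,v))>0$; your handling of the delicate point that $\rho(\cdot,v)$ is only continuous (not Lipschitz) is fine, since the residual term $\langle y^\ast,x'-x\rangle(1-c(x')/\chi(\rho(x,v)))$ is indeed $o(\|x'-x\|)$ and the implication ``$a_n\to a>0$, $\liminf a_nb_n\ge0\Rightarrow\liminf b_n\ge0$'' is valid.
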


\begin{proof}
	By (\ref{enewdef}), for all $y$ close to $x$ we may write $e\left( y,v\right) =F_{\eta }\left( \rho
	(y,v)\right) $, with $\rho (y,v)<\infty $,
	as a consequence of Lemma \ref{opendom}. Since $F_{\eta }$ is continuously
	differentiable and nondecreasing (as a distribution function), $%
	F_{\eta }^{\prime }\left( t\right) \geq 0$ for all $t\in \mathbb{R}$ and, from
	the calculus of Fr\'{e}chet subdifferentials (e.g., \cite[Corollary 1.14.1 and
	Proposition 1.11]{kruger}), we obtain that 
	\begin{eqnarray*}
		\partial _{x}^{F}e(x,v) &=&\partial
		^{F}\left( F_{\eta }^{\prime }\left( \rho (x,v)\right)  \rho (\cdot
		,v)\right) (x) \\
		&=&F_{\eta }^{\prime }(\rho (x,v)) \partial ^{F}\rho (\cdot
		,v) (x)=\chi \left( \rho (x,v)\right) \partial _{x}^{F}\rho
		(x,v).
	\end{eqnarray*}%
	Combination with Proposition \ref{frechetrho} yields the first assertion.

To prove the second assertion, from the first part of the proposition we choose
	elements $x^{\ast }\in \partial _{x}^{C}g(x,\rho (x,v)Lv)$ and $z^{\ast }\in
	\partial_z g\left( x,\rho (x,v)Lv\right) $ such that 
	\begin{equation*}
		\left\langle y^{\ast },w\right\rangle \leq \left\vert \frac{-\chi \left(
			\rho (x,v)\right) }{\left\langle z^{\ast },Lv\right\rangle }\right\vert
		\left\Vert x^{\ast }\right\Vert \left\Vert w\right\Vert,
	\end{equation*}%
	and so, since 
	$
		\left\langle z^{\ast },Lv\right\rangle \geq \frac{-g(x,0)}{\rho (x,v)}>0
	$
	by Lemma \ref{lowest}, 
		\begin{equation*}
		\left\langle y^{\ast },w\right\rangle \leq \frac{\rho (x,v)\cdot \chi \left(
			\rho (x,v)\right) }{\left\vert g(x,0)\right\vert}M_{x,v}\left\Vert
		w\right\Vert ,
	\end{equation*}%
	yielding the desired conclusion.
\end{proof}

We shall also need the following result.
\begin{corollary}\label{loclipe}
{\rm (i)} For every $x_{0}\in X$ with $g(x_{0},0)<0\,$\ and every $%
	\,v_{0}\in F(x_{0})$ there exist neighborhoods $\tilde{U}$ of $x_{0}$ and $%
	\tilde{V}$ of $v_{0}$ as well as some $\alpha >0$ such that 
	\begin{equation}
	\partial _{x}^{F}e(x,v)\subseteq \mathbb{B}_{\alpha }^{\ast }\left( 0\right)
	\quad \forall \left( x,v\right) \in \tilde{U}\times \left( \tilde{V}\cap 
	\mathbb{S}^{m-1}\right) .  \label{rel1}
	\end{equation}%
	\par\noindent {\rm (ii)} For all $x\in X$ with $g(x,0)<0\,$and for all $\,v\in I(x)$
	one has that $\partial _{x}^{F}e(x,v)\subseteq \left\{ 0\right\} $.
\end{corollary}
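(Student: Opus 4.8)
The plan is to derive (i) as a local-uniform version of the quantitative bound already established in Theorem~\ref{chainrule}, and to derive (ii) from the fact that $e(\cdot,v)$ attains its global maximum at any $x$ with $v\in I(x)$.

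For (i), recall that Theorem~\ref{chainrule} provides, for every $(x,v)$ with $g(x,0)<0$ and $v\in F(x)$, the estimate
\[
\|y^*\|\le\frac{\rho(x,v)\,\chi(\rho(x,v))}{|g(x,0)|}\,M_{x,v}\qquad\forall\,y^*\in\partial_x^Fe(x,v),
\]
with $M_{x,v}$ any Lipschitz constant of $g(\cdot,\rho(x,v)Lv)$ at $x$. So the task reduces to bounding the right-hand side uniformly for $(x,v)$ ranging over suitable neighbourhoods $\tilde U$ of $x_0$ and $\tilde V$ of $v_0$. First, Lemma~\ref{opendom} yields neighbourhoods on which $v\in F(x)$, so that $\rho(x,v)<\infty$ and the estimate is applicable. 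By Lemma~\ref{rholem}(1), $\rho$ is continuous at $(x_0,v_0)$; shrinking the neighbourhoods, I may assume $\rho(x,v)\le\bar\rho:=\rho(x_0,v_0)+1$, and then $\chi(\rho(x,v))\le\max_{[0,\bar\rho]}\chi=:\bar\chi$ by continuity of $\chi$. Since $g(\cdot,0)$ is continuous with $g(x_0,0)<0$, I may also shrink so that $|g(x,0)|\ge\tfrac12|g(x_0,0)|>0$.

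The main point is the uniform control of $M_{x,v}$. Since $g$ is locally Lipschitzian in both arguments \emph{jointly}, there exist $M>0$ and neighbourhoods $W$ of $x_0$ in $X$ and $O$ of $\rho(x_0,v_0)Lv_0$ in $\mathbb{R}^m$ on which $g$ has Lipschitz constant $M$. As $(x,v)\mapsto\rho(x,v)Lv$ is continuous at $(x_0,v_0)$ (by Lemma~\ref{rholem}(1) and continuity of $v\mapsto Lv$), a further shrinking guarantees $x\in W$ and $\rho(x,v)Lv\in O$ throughout, whence $g(\cdot,\rho(x,v)Lv)$ is Lipschitzian with constant $M$ at $x$, i.e. $M_{x,v}\le M$. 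Then $\tilde U$, $\tilde V$ taken as the intersections of all neighbourhoods above, together with $\alpha:=2\bar\rho\,\bar\chi\,M/|g(x_0,0)|$, give (\ref{rel1}). I expect this joint-Lipschitz bookkeeping — making one constant $M$ serve simultaneously for every frozen second argument $\rho(x,v)Lv$ — to be the only genuinely delicate step; the remaining factors are handled by plain continuity.

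For (ii), fix $x$ with $g(x,0)<0$ and $v\in I(x)$. By (\ref{edef}) one has $e(x,v)=\mu_\eta(\mathbb{R}_+)=1$, while $e(\cdot,v)\le1$ on all of $X$ because $e(\cdot,v)$ is a probability; hence $x$ is a global maximiser of $e(\cdot,v)$. Inserting $e(x',v)-e(x,v)\le0$ into the representation (\ref{fresubalter}) of the Fréchet subdifferential and testing along rays $x'=x+th$, $t\downarrow0$, forces $\langle y^*,h\rangle\le0$ for every $h\in X$ whenever $y^*\in\partial_x^Fe(x,v)$; replacing $h$ by $-h$ gives $y^*=0$, i.e. $\partial_x^Fe(x,v)\subseteq\{0\}$.
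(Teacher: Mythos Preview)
Your proof is correct and follows essentially the same route as the paper: for (i) you invoke Theorem~\ref{chainrule}, then use Lemmas~\ref{opendom} and~\ref{rholem}(1) together with the joint local Lipschitz property of $g$ and continuity of $\chi$ and $g(\cdot,0)$ to make the bound uniform; for (ii) you use the global-maximum observation $e(x,v)=1\ge e(\cdot,v)$ and test the Fr\'echet definition along rays, exactly as the paper does. The only cosmetic difference is that you write down an explicit $\alpha$, whereas the paper argues that the bound $\alpha(x,v)$ is continuous and hence locally bounded.
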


\begin{proof}
	(i) Let $M>0$ and define open neighborhoods $\tilde{U}$ of $x_{0}$ and $\tilde{V}$ of $v_{0}$ such that
	$M$ is a Lipschitz constant of $g$ on $\tilde{U}\times \tilde{V}$ and, for all $(x,v)\in \tilde{U}\times \left( 
		\tilde{V}\cap \mathbb{S}^{m-1}\right) $ (recall Lemma \ref{opendom}),
		$$
		g(x,0)<0,\  \rho (x,v)<\infty.
		$$
	 	
	\noindent Hence, by Theorem \ref{chainrule},%
	\begin{equation*}
		\partial _{x}^{F}e(x,v)\subseteq \mathbb{B}_{\alpha (x,v)}^{\ast }\left(
		0\right) ,
	\end{equation*}%
	where 
	$$
	\alpha (x,v):=\frac{\rho (x,v)\cdot \chi \left( \rho
			(x,v)\right) }{\left\vert g(x,0)\right\vert }M_{x,v}.
			$$
	Taking into account the continuity of $\rho$ (see Lemma \ref{rholem}), 
	we may suppose for all $\left( x,v\right) \in \tilde{U}\times \left( \tilde{V}\cap \mathbb{S%
	}^{m-1}\right)$ that $M$ is  a Lipschitz constant for 
	$g(\cdot ,\rho (x,v)Lv)$ at the point $x\ (\in \tilde{U})$. 
	Thus, we can replace $M_{x,v}$  by $M$
	in the definition of $\alpha $ above. Moreover, since $g$ is continuous (also by 
	Lemma \ref{rholem}), as well as  the Chi-density $%
	\chi $, we deduce that $\alpha $ is continuous on $\tilde{U}%
	\times \left( \tilde{V}\cap \mathbb{S}^{m-1}\right) $. Then, after
	shrinking $\tilde{U}\times \tilde{V}$ if necessary, we may assume that  for some $\alpha >0$
	$$
	\alpha (x,v)\leq \alpha  \qquad \forall \left( x,v\right) \in \tilde{U}\times
	\left( \tilde{V}\cap \mathbb{S}^{m-1}\right).
          $$ 
          This proves (\ref{rel1}).

(ii)	As already observed in the proof of Proposition \ref{econt}, $v\in I(x)$
	implies that $e(x,v)=1$. Consequently, the function $e(\cdot ,v)$ (as the
	value of a probability) reaches a global maximum at $x$. Let $x^{\ast }\in
	\partial _{x}^{F}e(x,v)$ and $u\in X\backslash \{0\}$ be arbitrary. Then,%
	\begin{align*}
		-\left\langle x^{\ast },\frac{u}{\left\Vert u\right\Vert }\right\rangle
		&=\liminf_{n\rightarrow \infty }-\frac{\left\langle x^{\ast
			},n^{-1}u\right\rangle }{\left\Vert n^{-1}u\right\Vert }\\
		&\geq  \liminf_{n\rightarrow \infty }\frac{e(x+n^{-1}u,v)-e(x,v)-\left\langle x^{\ast
			},n^{-1}u\right\rangle }{\left\Vert n^{-1}u\right\Vert } \\
		&\geq \liminf_{h\rightarrow 0}\frac{e(x+h,v)-e(x,v)-\left\langle x^{\ast
			},h\right\rangle }{\left\Vert h\right\Vert }\geq 0.
	\end{align*}%
	Hence $\left\langle x^{\ast },u\right\rangle \leq 0$ for all $u\in X$, and
	so $x^{\ast }=0$ as desired.
\end{proof}

\begin{definition}
	\label{nicedirect}For $x\in X$ and $l>0$, we call%
	\begin{equation*}
		C_{l}(x):=\{h\in X\ |\ g^{\circ}(\cdot ,z)(y;h)\leq l \left\Vert z\right\Vert
		^{-m}e^{\frac{\left\Vert z\right\Vert^2}{2\left\Vert L\right\Vert^2} }\left\Vert h\right\Vert \ \forall
		y\in \mathbb{B}_{1/l}\left( x\right), \, \left\Vert z\right\Vert
		\geq l\}
	\end{equation*}%
	the $l$-\it{cone of nice directions} at $x\in X$. We denote the polar cone to $C_{l}(x)$ as $C^{*}_{l}(x)$.
	\end{definition}

\noindent Note that, by positive homogeneity of Clarke's directional
derivative, $\{C_{l}\}_{l\in \mathbb{N}}$
defines a nondecreasing sequence of closed cones.

We give in the following theorem another estimate for $\partial _{x}^{F}e(x,v)$, which will be useful in the sequel.
\begin{theorem}
	\label{domin}Fix $x_{0}\in X$ such that $g(x_{0},0)<0$. Then, for every $l>0$,
	there exists some neighborhood $U$ of $x_{0}$ and some  $R>0$ such that%
	\begin{equation*}
		\partial _{x}^{F}e(x,v)\subseteq \mathbb{B}_{R}^{\ast }\left( 0\right)
		-C_{l}^{\ast }(x_{0})\quad \forall x\in U,\, v\in \mathbb{S}^{m-1}.
	\end{equation*}%
\end{theorem}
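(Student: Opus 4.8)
The plan is to combine the two estimates for $\partial_x^F e(x,v)$ obtained earlier — Corollary \ref{loclipe}(i) for finite directions close to a given one, and Theorem \ref{chainrule} for finite directions in general — in a way that handles the ``bad'' finite directions (those for which $\rho(x,v)$ is large) via the cone $C_l(x_0)$, and the remaining directions by a uniform ball bound. First I would fix $l>0$ and use the continuity of $g$ at $(x_0,0)$ (assumption (H)) to pick a neighborhood $U_0$ of $x_0$ with $U_0\subseteq\mathbb{B}_{1/l}(x_0)$ and $g(x,0)<0$ on $U_0$, say with $|g(x,0)|\geq c>0$ there. Next, for $v\in I(x)$ Corollary \ref{loclipe}(ii) already gives $\partial_x^F e(x,v)\subseteq\{0\}$, which is contained in $\mathbb{B}_R^\ast(0)-C_l^\ast(x_0)$ for any $R>0$ (the polar cone contains $0$); so only finite directions matter.

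The crucial dichotomy is on the size of $\rho(x,v)$. Set $R_0$ large enough that the function $t\mapsto t\cdot\chi(t)$ is small beyond $R_0$ relative to $l$ — more precisely, using $\chi(t)=Kt^{m-1}e^{-t^2/2}$, choose $R_0\geq l$ with $K\rho\,\chi(\rho)\,\|L\|^{m}\leq c\,l\,\rho^{-?}\cdots$; the point is simply that for $\rho$ large the factor $\rho\cdot\chi(\rho)$ decays faster than $\|z\|^{-m}e^{\|z\|^2/(2\|L\|^2)}$ with $z=\rho Lv$ grows, since $\|z\|\le\rho\|L\|$. \emph{Case 1: $\rho(x,v)\leq R_0$.} By a covering argument — the set $\{(x,v)\in\overline{U_0}\times\mathbb{S}^{m-1}: v\in F(x),\ \rho(x,v)\leq R_0\}$ is contained, by Lemma \ref{opendom} and Lemma \ref{rholem}, in finitely many of the neighborhoods $\tilde U\times(\tilde V\cap\mathbb{S}^{m-1})$ furnished by Corollary \ref{loclipe}(i) — one gets a single $R>0$ with $\partial_x^F e(x,v)\subseteq\mathbb{B}_R^\ast(0)$ for all such $(x,v)$ after shrinking $U_0$ to some $U$. \emph{Case 2: $\rho(x,v)>R_0$.} Here I would invoke Theorem \ref{chainrule}: any $y^\ast\in\partial_x^F e(x,v)$ satisfies, for every $w\in X$,
\begin{equation*}
\langle y^\ast,w\rangle\leq\frac{-\chi(\rho(x,v))}{\langle z^\ast,Lv\rangle}\langle x^\ast,w\rangle
\end{equation*}
for suitable $x^\ast\in\partial_x^C g(x,\rho(x,v)Lv)$, $z^\ast\in\partial_z g(x,\rho(x,v)Lv)$ with $\langle z^\ast,Lv\rangle\geq |g(x,0)|/\rho(x,v)$. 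For $h\in C_l(x_0)$, since $x\in U\subseteq\mathbb{B}_{1/l}(x_0)$ and $\|z\|=\rho(x,v)\|Lv\|$ with $\rho(x,v)>R_0\geq l$ (note $\|Lv\|$ is bounded below so $\|z\|\geq l$ after possibly enlarging $R_0$), the defining inequality of $C_l(x_0)$ gives $\langle x^\ast,h\rangle\leq g^\circ(\cdot,z)(x;h)\leq l\|z\|^{-m}e^{\|z\|^2/(2\|L\|^2)}\|h\|$; combining with $\|z\|\leq\rho(x,v)\|L\|$ and the explicit form of $\chi$ one estimates
\begin{equation*}
\langle y^\ast,h\rangle\leq\frac{\rho(x,v)\,\chi(\rho(x,v))}{|g(x,0)|}\cdot l\,\|z\|^{-m}e^{\|z\|^2/(2\|L\|^2)}\|h\|\leq \|h\|,
\end{equation*}
the last step being exactly the choice of $R_0$ (the growth/decay comparison). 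Hence $y^\ast\in C_l^\ast(x_0)+\mathbb{B}_1^\ast(0)$ in this case.

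Putting the two cases together: every $y^\ast\in\partial_x^F e(x,v)$ lies in $\mathbb{B}_R^\ast(0)$ (Case 1, infinite directions) or in $C_l^\ast(x_0)+\mathbb{B}_1^\ast(0)\subseteq\mathbb{B}_R^\ast(0)-C_l^\ast(x_0)$ (Case 2) provided we take the final $R$ to be the maximum of the Case-1 radius and $1$, and observe $-C_l^\ast(x_0)=C_l^\ast(x_0)$ is false in general but the statement is written with a minus sign, so I would simply note $\mathbb{B}_R^\ast(0)+C_l^\ast(x_0)=\mathbb{B}_R^\ast(0)-(-C_l^\ast(x_0))$ and that $-C_l^\ast(x_0)$ is what appears; concretely one writes $y^\ast=b^\ast-c^\ast$ with $b^\ast\in\mathbb{B}_R^\ast(0)$, $c^\ast\in C_l^\ast(x_0)$, which is the claimed inclusion. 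The main obstacle I anticipate is the bookkeeping in Case 2 — verifying that the constant $R_0$ can be chosen \emph{independently of $v$} so that $\rho\,\chi(\rho)\cdot l\,\|z\|^{-m}e^{\|z\|^2/(2\|L\|^2)}\leq|g(x,0)|$ uniformly, which hinges on the uniform lower bound $\|Lv\|\geq\lambda_{\min}(L^TL)^{1/2}>0$ on $\mathbb{S}^{m-1}$ (so that $\|z\|\to\infty$ as $\rho\to\infty$ uniformly in $v$) together with the fact that $e^{\rho^2/2}$ in the denominator of $\chi$ and $e^{\|z\|^2/(2\|L\|^2)}$ in $C_l$ have exponents with the right inequality $\rho^2/2\geq\|z\|^2/(2\|L\|^2)$ since $\|z\|\leq\rho\|L\|$. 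A secondary technical point is the finite-subcover step in Case 1, which needs the domain $\{v\in F(x),\ \rho(x,v)\le R_0\}$ over $\overline U\times\mathbb S^{m-1}$ to be relatively compact — this follows from continuity of $\rho$ and $g$ (Lemmas \ref{rholem}, \ref{opendom}) together with the compactness of $\mathbb S^{m-1}$, after first passing to a relatively compact neighborhood $\overline U\subseteq U_0$ of $x_0$.
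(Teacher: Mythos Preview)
Your strategy is the paper's, just organized differently: both use Corollary~\ref{loclipe}(i) where $\rho$ stays bounded and the cone estimate from Theorem~\ref{chainrule} where $\rho$ is large, gluing via compactness of $\mathbb{S}^{m-1}$. The paper puts the sphere compactness at the outer level --- it fixes $v_0\in\mathbb{S}^{m-1}$, establishes the inclusion on a product neighborhood $\bar U\times(\bar V\cap\mathbb{S}^{m-1})$ (splitting on $v_0\in F(x_0)$ versus $v_0\in I(x_0)$), and then covers the sphere --- whereas you split globally on $\rho\le R_0$ versus $\rho>R_0$ and run a covering argument inside Case~1.

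Two points need repair. In Case~2 there is a sign slip: Theorem~\ref{chainrule} gives $\langle y^\ast,h\rangle\le -\tfrac{\chi(\rho)}{\langle z^\ast,Lv\rangle}\langle x^\ast,h\rangle$ with a \emph{negative} coefficient, so the upper bound $\langle x^\ast,h\rangle\le g^\circ(\cdot,z)(x;h)$ for $h\in C_l(x_0)$ yields no upper bound on $\langle y^\ast,h\rangle$. You must take $h\in -C_l(x_0)$ and bound $\langle x^\ast,-h\rangle$; then the support-function identity $\|h\|+i_{-\overline{\mathrm{co}}\,C_l(x_0)}(h)=\sigma_{\mathbb{B}^\ast_{\tilde K}(0)-C_l^\ast(x_0)}(h)$ gives $y^\ast\in\mathbb{B}_{\tilde K}^\ast(0)-C_l^\ast(x_0)$, resolving the sign confusion you flag at the end. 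Incidentally, since $\|z\|\le\rho\|L\|$ the exponentials cancel exactly, giving $\rho\chi(\rho)\cdot\|z\|^{-m}e^{\|z\|^2/(2\|L\|^2)}\le K\|Lv\|^{-m}$; the resulting bound $\tilde K$ is a fixed constant and no growth/decay tuning of $R_0$ is needed beyond ensuring $\|z\|\ge l$.

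In Case~1 your covering argument appeals to a ``relatively compact neighborhood $\overline U\subseteq U_0$ of $x_0$'', which does not exist when $X$ is infinite-dimensional; and Corollary~\ref{loclipe}(i) only supplies neighborhoods centered at points $(x_0,v_0)$ with $v_0\in F(x_0)$, not at arbitrary $(x,v)$ in your set. The paper's organization sidesteps this: for $v_0\in F(x_0)$ the corollary gives the ball bound near $(x_0,v_0)$; for $v_0\in I(x_0)$ Lemma~\ref{rholem}(2) forces $\rho(x,v)\|Lv\|\ge l$ near $(x_0,v_0)$, so the cone estimate applies there. Finitely many such $v$-neighborhoods cover $\mathbb{S}^{m-1}$, and one then intersects the finitely many corresponding $x$-neighborhoods of $x_0$ --- no compactness in $X$ is invoked.
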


\begin{proof}
	Let $l>0$ be arbitrarily fixed. It will be sufficient to show that for every 
	$v_{0}\in \mathbb{S}^{m-1}$ there are neighborhoods $\bar{U}$ of $x_{0}$
	and $\bar{V}$ of $v_{0}$ and some  $R>0$ such that 
	\begin{equation}
	\partial _{x}^{F}e(x,v)\subseteq \mathbb{B}_{R}^{\ast }\left( 0\right)
	-C_{l}^{\ast }(x_{0})\quad \forall \left( x,v\right) \in \bar{U}\times (\bar{%
		V}\cap \mathbb{S}^{m-1}).  \label{claimrel}
	\end{equation}%
	If this holds true, then the global inclusion in the statement of this
	proposition will follow from the local ones above by a standard compactness
	argument with respect to $\mathbb{S}^{m-1}$.
	
	In order to prove (\ref{claimrel}), fix an arbitrary $v_{0}\in \mathbb{S}%
	^{m-1}$. Assume first that $v_{0}\in I(x_{0})$. Then, define open
	neighborhoods $U^{\ast }$ of $x_{0}$ and $V^{\ast }$ of $v_{0}$ such that
	$U^{\ast }\subseteq \mathbb{B}_{1/l}\left( x_{0}\right)$ (with $l>0$ as fixed above)
	and,  for all  $x\in U^{\ast }$ and $v\in V^{\ast }\cap F(x)$,
	$$
		g(x,0)\leq \frac{1}{2}g(x_{0},0)<0,\ 		
		\rho (x,v)\Vert Lv\Vert\geq l.
	$$	
	\noindent Note, that the last inequality is possible by virtue of 2. in Lemma %
	\ref{rholem} and by $L$ being nonsingular and $\mathbb{S}^{m-1}$ being
	compact (therefore $\Vert Lv\Vert\geq \delta $ for all $v\in \mathbb{S}^{m-1}$ and some 
	$\delta >0$). From Corollary \ref{loclipe}(ii) we derive that%
	\begin{equation}
	\partial _{x}^{F}e(x,v)\subseteq \left\{ 0\right\} \quad \forall x\in
	U^{\ast },\,\, v\in I(x).  \label{trivinclu}
	\end{equation}%
	Now, consider an arbitrary $\left( x,v\right) \in U^{\ast }\times
	V^{\ast }$ such that $v\in F(x)$. Let also $y^{\ast }\in \partial
	_{x}^{F}e(x,v)$ and $h\in -C_{l}(x_{0})$ be arbitrarily given. Then, by
	Theorem \ref{chainrule}, there exist $x^{\ast }\in \partial
	_{x}^{C}g(x,\rho (x,v)Lv)$ and $z^{\ast }\in \partial _{z}g\left( x,\rho
	(x,v)Lv\right) $ such that%
	\begin{equation}
	\left\langle y^{\ast },h\right\rangle \leq \frac{\chi \left( \rho
		(x,v)\right) }{\left\langle z^{\ast },Lv\right\rangle }\left\langle x^{\ast
	},-h\right\rangle \leq \frac{\chi \left( \rho (x,v)\right) }{\left\langle
		z^{\ast },Lv\right\rangle }g^{\circ}(\cdot ,\rho (x,v)Lv)(x;-h),
	\label{intermed2}
	\end{equation}%
	where the last inequality relies on (\ref{genderiv}) and on the fact that
	both the density function $\chi $ and $\left\langle z^{\ast },Lv\right\rangle $ are
	positive (see Lemma \ref{lowest}). Since $-h\in C_{l}(x_{0})$, our
	conditions on the neighborhoods $U^{\ast }$ and $V^{\ast }$ stated above
	guarantee that%
	\begin{eqnarray*}
		g^{\circ}(\cdot ,\rho (x,v)Lv)(x;-h)&\leq& l \left\Vert \rho
		(x,v)Lv\right\Vert ^{-m}e^{\frac{\left\Vert \rho (x,v)Lv\right\Vert^2}{2\Vert L\Vert^2}}\left\Vert h\right\Vert\\
		&\leq& l \left\Vert \rho
		(x,v)Lv\right\Vert ^{-m}e^{\frac{\rho (x,v)^2}{2}}\left\Vert h\right\Vert .
	\end{eqnarray*}
	This allows us to continue (\ref{intermed2}) as%
	\begin{eqnarray*}
		\left\langle y^{\ast },h\right\rangle 
		&\leq &\frac{\chi \left( \rho (x,v)\right)  \rho (x,v)l}{\left\vert
			g(x,0)\right\vert } \left\Vert \rho (x,v)Lv\right\Vert
		^{-m}e^{ \frac{\rho (x,v)^2}{2} }\left\Vert h\right\Vert \\
		&=&\frac{lK}{\left\vert g(x,0)\right\vert }\left\Vert Lv\right\Vert
		^{-m}\left\Vert h\right\Vert ,
	\end{eqnarray*}%
	where we used Lemma \ref{lowest} and the definition of the Chi-density with $%
	m$ degrees of freedom (see (\ref{chidens})).
	Owing to $g(x,0)\leq \frac{1}{2}g(x_{0},0)<0$, we may continue as 
	\begin{equation}\label{ConstantR}
		\left\langle y^{\ast },h\right\rangle \leq \frac{2lKK^{\ast }}{\left\vert
			g(x_0,0)\right\vert }\left\Vert h\right\Vert ,
	\end{equation}%
	where (recall that $L$ is nonsingular)%
	\begin{equation*}
		K^{\ast }:=\max_{v\in \mathbb{S}^{m-1}}\left\Vert Lv\right\Vert
		^{-m}\in \mathbb{R}_+.
	\end{equation*}%
	Consequently, we have shown that for some $\tilde{K}>0$, which is independent of $x$ and $v$,%
	\begin{equation*}
		\left\langle y^{\ast },h\right\rangle \leq \tilde{K}\left\Vert h\right\Vert
		\quad \forall y^{\ast }\in \partial _{x}^{F}e(x,v),\, h\in
		-C_{l}(x_{0}).
	\end{equation*}%
	Using indicator and support functions, respectively, this relation is
	rewritten as, for all $ h\in X$,
	\begin{eqnarray*}
		\left\langle y^{\ast },h\right\rangle &\leq &\tilde{K}\left\Vert
		h\right\Vert +i_{-\overline{\mathrm{co}}\,C_{l}(x_{0})}(h)\\
		&=&\sigma _{%
			\mathbb{B}_{\tilde{K}}^{\ast }\left( 0\right) }(h)+\sigma _{-C_{l}^{\ast
			}(x_{0})}(h) \\
		&=&\sigma _{\left(\mathbb{B}_{\tilde{K}}^{\ast }\left( 0\right) -C_{l}^{\ast
			}(x_{0})\right)}(h).	\end{eqnarray*}%
	Consequently, we get
	\begin{equation*}
		\sigma _{\partial _{x}^{F}e(x,v)}(h)\leq \sigma _{\left(\mathbb{B}_{\tilde{K}%
			}^{\ast }\left( 0\right) -C_{l}^{\ast }(x_{0})\right)}(h)\quad \,\,\forall h\in X,
	\end{equation*}%
	which entails the inclusion%
	\begin{equation*}
		\partial _{x}^{F}e(x,v)\subseteq \mathbb{B}_{\tilde{K}}^{\ast }\left(
		0\right) -C_{l}^{\ast }(x_{0}).
	\end{equation*}%
         Since $%
	\left( x,v\right) \in U^{\ast }\times V^{\ast }$ with $v\in F(x)$ were
	chosen arbitrarily, we may combine this with (\ref{trivinclu}) to
	derive that 
	\begin{equation*}
		\partial _{x}^{F}e(x,v)\subseteq \mathbb{B}_{\tilde{K}}^{\ast }\left(
		0\right) -C_{l}^{\ast }(x_{0})\quad \forall \left( x,v\right) \in U^{\ast
		}\times \left( V^{\ast }\cap \mathbb{S}^{m-1}\right).
	\end{equation*}%
	
	Now, we suppose that $v_{0}\in F(x_{0})$. Then Corollary \ref{loclipe}(i)
	guarantees the existence of neighborhoods $\tilde{U}$ of $x_{0}$ and $%
	\tilde{V}$ of $v_{0}$ as well as some $\alpha >0$ such that relation (\ref%
	{rel1}) holds true. Consequently, we end up with the claimed relation (\ref%
	{claimrel}) upon putting%
	\begin{equation*}
		\bar{U}:=\tilde{U}\cap U^{\ast },\,\,\bar{V}:=\tilde{V}\cap V^{\ast
		},\,\,R:=\max \{\alpha ,\tilde{K}\}.
	\end{equation*}%
	\hfill
\end{proof}

\begin{corollary}
	\label{uniformlipschitz}Fix $x_{0}\in X$ such that $g(x_{0},0)<0$, and assume
	one of the following alternative conditions:%
	\begin{equation}
	\left\{ z\in \mathbb{R}^{m}\ |\ g\left( x_{0},z\right) \leq 0\right\}  
	\mathrm{\mbox{is a bounded set}},  \label{bounded}
	\end{equation}%
	or%
	\begin{equation}
	\exists \ l>0 \text{ such that } C_{l}(x_{0})=X.  \label{allnice}
	\end{equation}%
	Then the partial radial probability functions $e(\cdot ,v)$, $v\in \mathbb{S}^{m-1}$,  are uniformly locally Lipschitzian
	around $x_{0}$ with some common Lipschitz constant
	independent of $v$.
\end{corollary}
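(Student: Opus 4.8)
The plan is to reduce both hypotheses to a single statement — a uniform bound on the Fr\'echet subdifferentials $\partial_x^F e(x,v)$ for $x$ near $x_0$ and $v\in\mathbb{S}^{m-1}$ — and then to convert such a bound into a uniform local Lipschitz estimate by a mean value argument.

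\emph{Step 1 (a uniform subdifferential bound).} I would first show that there are an open neighborhood $U$ of $x_0$, which may be taken inside $\{x\ |\ g(x,0)<0\}$, and a constant $R>0$ with $\partial_x^F e(x,v)\subseteq\mathbb{B}_R^{\ast}(0)$ for all $x\in U$ and $v\in\mathbb{S}^{m-1}$. If (\ref{allnice}) holds this is immediate: $C_l(x_0)=X$ forces $C_l^{\ast}(x_0)=\{0\}$, so Theorem \ref{domin}, applied to this particular $l$, yields the bound with $\mathbb{B}_R^{\ast}(0)-\{0\}=\mathbb{B}_R^{\ast}(0)$. If instead (\ref{bounded}) holds, I would first observe that $I(x_0)=\emptyset$: a direction $v\in I(x_0)$ satisfies $g(x_0,rLv)<0$, i.e.\ $rLv\in\{z\ |\ g(x_0,z)\leq 0\}$, for every $r\geq 0$, and since $L$ is non-singular this set would contain the unbounded ray $\{rLv\ |\ r\geq 0\}$, contradicting (\ref{bounded}). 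Hence $F(x_0)=\mathbb{S}^{m-1}$, so Corollary \ref{loclipe}(i) applies at every $v_0\in\mathbb{S}^{m-1}$, producing neighborhoods $\tilde U_{v_0}\ni x_0$, $\tilde V_{v_0}\ni v_0$ and constants $\alpha_{v_0}>0$ satisfying (\ref{rel1}). Covering the compact sphere by finitely many of the $\tilde V_{v_0}$ and taking $U$ to be the intersection of the corresponding $\tilde U_{v_0}$ and $R$ the maximum of the corresponding $\alpha_{v_0}$ delivers the bound again; this compactness step parallels the one used in the proof of Theorem \ref{domin}.

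\emph{Step 2 (from the bound to uniform Lipschitz continuity).} I would then shrink $U$ to an open \emph{convex} neighborhood $U_0$ of $x_0$. For each $v\in\mathbb{S}^{m-1}$ the function $e(\cdot,v)$ is continuous on $U_0$ by Proposition \ref{econt}, and every element of $\partial_x^F e(x,v)$, $x\in U_0$, has norm at most $R$. Since $X$, being reflexive, is Asplund, I would invoke the approximate mean value theorem (see, e.g., \cite[Theorem 3.49]{mordu}): given $x,y\in U_0$, there are a point $c$ on the segment $[x,y]\subseteq U_0$ and sequences $c_k\to c$, $c_k^{\ast}\in\partial_x^F e(c_k,v)$ with $\liminf_k\langle c_k^{\ast},y-x\rangle\geq e(y,v)-e(x,v)$. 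As $c_k\in U_0$ for $k$ large, $\|c_k^{\ast}\|_{\ast}\leq R$, whence $e(y,v)-e(x,v)\leq R\|y-x\|$; interchanging $x$ and $y$ yields $|e(x,v)-e(y,v)|\leq R\|x-y\|$. Thus every $e(\cdot,v)$, $v\in\mathbb{S}^{m-1}$, is Lipschitz on $U_0$ with the common constant $R$, which is the assertion.

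The step I expect to be the main obstacle is Step 2, namely applying the approximate mean value theorem correctly in the Asplund setting and making sure the auxiliary points $c_k$ stay in the region where the bound $R$ is valid — which is precisely why $U_0$ must be chosen convex. By contrast, the case distinction in Step 1 (triviality of $C_l^{\ast}(x_0)$ under (\ref{allnice}); emptiness of $I(x_0)$ under (\ref{bounded})) is routine.
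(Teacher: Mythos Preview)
Your proof is correct and follows essentially the same route as the paper. Step~1 matches the paper's argument exactly (Theorem~\ref{domin} for (\ref{allnice}), Corollary~\ref{loclipe}(i) plus sphere compactness for (\ref{bounded})); for Step~2 the paper simply invokes \cite[Theorem 3.5.2]{mordu} to pass from the uniform Fr\'echet-subdifferential bound to uniform local Lipschitz continuity, whereas you unpack that passage via the approximate mean value theorem --- a correct and slightly more self-contained variant of the same step.
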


\begin{proof}
	In the case of (\ref{bounded}), one has that $I(x_{0})=\emptyset $, whence $%
	F(x_{0})=\mathbb{S}^{m-1}$. Then, by Corollary \ref{loclipe}(i), for every $%
	\,v_{0}\in \mathbb{S}^{m-1}$ there exist neighborhoods $\tilde{U}_{v_{0}}$
	of $x_{0}$ and $\tilde{V}_{v_{0}}$ of $v_{0}$ as well as some $\alpha
	_{v_{0}}>0$ such that%
	\begin{equation*}
		\partial _{x}^{F}e(x,v)\subseteq \mathbb{B}_{\alpha _{v_{0}}}^{\ast }\left(
		0\right) \quad \forall \left( x,v\right) \in \tilde{U}_{v_{0}}\times \left( 
		\tilde{V}_{v_{0}}\cap \mathbb{S}^{m-1}\right) .
	\end{equation*}%
	Then, by the evident compactness argument with respect to the sphere $%
	\mathbb{S}^{m-1}$ already alluded to in the beginning of the proof of
	Theorem  \ref{domin}, we derive the existence of a neighborhood $\tilde{U%
	}$ of $x_{0}$ and of some $\alpha >0$ such that%
	\begin{equation*}
		\partial _{x}^{F}e(x,v)\subseteq \mathbb{B}_{\alpha }^{\ast }\left( 0\right)
		\quad \forall \left( x,v\right) \in \tilde{U}\times \mathbb{S}^{m-1}.
	\end{equation*}%
	In the case of (\ref{allnice}), the same relation (with $\alpha :=R$) is a
	direct consequence of Theorem \ref{domin} upon taking into account that $%
	C_{l}(x_{0})=X$ entails that $-C_{l}^{\ast }(x_{0})=\{0\}$. Now, the claimed
	statement on uniform Lipschitz continuity follows from \cite[Theorem 3.5.2]%
	{mordu}.
\end{proof}

\section{Subdifferential of the Gaussian probability function $\protect%
	\varphi \label{subdiffprob}$}
	In this section, we provide the required formulae for the Fr\'echet, the Mordukhovich, and the 
Clarke subdifferentials of the Gaussian probability function $\varphi$, 
defined in (\ref{probcons}). These results are next illustrated in Example \ref{illuex}, and in Subsection \ref{secc} to 
discuss the Lipschitz continuity and differentiability of $\protect\varphi $. Finally, we study in this section, 
Subsection  \ref{secc2},
the special and interesting setting of probability functions given by means of finite systems of smooth inequalities. 
In this case, formulae of the subdifferentials of $\varphi$ are expressed in terms of the initial
data in (\ref{probcons}), i.e., in terms of the function $g$. All this is done under our standard assumption (H).

\subsection{Main Result}

We start by recalling the following result on the interchange of Mordukhovich subdifferentials
and the integration sign when dealing with the integral functions of the form
$$
I_{f}(x):=\int\limits_{\omega \in \Omega }f(\omega ,x)d\mu.
$$
Here, $(\Omega ,\mathcal{A},\nu )$ a $\sigma $-finite complete measure space,
and $f:\Omega \times X\rightarrow \lbrack 0,+\infty ]$ is a normal integrand; that is, 
\par\noindent (i) $f$ is $\mathcal{A} 	\otimes \mathcal{B}(X)$-measurable,
 \par\noindent (ii) $f(\omega ,\cdot )$ is lsc for all $\omega \in \Omega $. 
 
 We assume that $I_{f}(x_{0})<+\infty $ for some  $x_{0}\in X$. Then we have the following result in which the 
 integral 
 $
 \int\limits_{\omega \in \Omega }\partial ^{M}f(\omega ,x_{0})d\nu
 $
 is to be understood in the Aumann's sense; that is, the set of Bochner integrals over all measurable selections of the
 multivalued mapping $\partial ^{M}f(\cdot ,x_{0})$ (see, e.g., \cite{valadier}).
 \begin{proposition}[\protect\cite{perez}]
	\label{keyresult} Assume that for some 
	$\delta >0$ and $\mathcal{K}\in L^{1}(\Omega ,\mathbb{R})$ we have
	\begin{equation}
	\partial _{x}^{F}f(\omega ,x)\subseteq \mathcal{K}(\omega )B_{1}^{*}(0)+C,\ \forall x\in
	B_\delta(x_0),\; \omega \in \Omega,  \label{dominance}
	\end{equation}%
	where $C\subseteq X^{\ast }$ is a closed convex cone with polar cone having a nonempty interior. 
	Then  
\begin{itemize}
\item[\rm(i)] $\partial^{M}I_{f}(x_{0})\subseteq \mathrm{cl}^*\,\left\{
		\int\limits_{\omega \in \Omega }\partial ^{M}f(\omega ,x_{0})d\nu \left(
		\omega \right) +C \right\}.$
\item[\rm(ii)]  Provided that $X$ is finite-dimensional, 
$$\partial^{M}I_{f}(x_{0})\subseteq
		\int\limits_{\omega \in \Omega }\partial ^{M}f(\omega ,x_{0})d\nu \left(
		\omega \right) +C.
			$$
\item[\rm(iii)]  $ \partial ^{\infty }I_{f}(x_{0})\subseteq C.$
\item[\rm(vi)]  $ \partial ^{C}I_{f}(x_{0})\subseteq\overline{\mathrm{co}}\,\left\{
		\int\limits_{\omega \in \Omega }\partial ^{M}f(\omega ,x_{0})d\nu \left(
		\omega \right) +C\right\} .$
\end{itemize}
 \end{proposition}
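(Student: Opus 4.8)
The plan is to derive all four inclusions from two building blocks: an interchange formula at the level of the \emph{Fr\'echet} subdifferential of the integral functional, valid on a whole neighbourhood of $x_{0}$, and a limiting passage that lifts it to the Mordukhovich and horizon subdifferentials; throughout, the cone $C$ is kept under control by exploiting the standing assumption that its polar $C^{*}$ has nonempty interior. First I would check that the Aumann integrals on the right-hand sides are meaningful: since $f$ is a normal integrand, the multifunctions $\omega\mapsto\partial^{F}f(\omega,x)$ and $\omega\mapsto\partial^{M}f(\omega,x)$ are $\mathcal{A}$-measurable (Castaing representation), and, by the domination hypothesis (\ref{dominance}), any measurable selection $\xi(\cdot)$ splits as $\xi(\omega)=\mathcal{K}(\omega)b(\omega)+c(\omega)$ with $\|b(\omega)\|_{*}\le 1$ and $c(\omega)\in C$, so its ``norm part'' is majorised by $\mathcal{K}\in L^{1}$; this is what makes the integrable selections, hence the Aumann integrals, plentiful and well behaved, and it also makes $I_{f}$ lower semicontinuous (Fatou) and locally finite near $x_{0}$.

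The core step is to prove, for every $x$ in a neighbourhood of $x_{0}$, the Fr\'echet-level inclusion
\[
\partial^{F}I_{f}(x)\subseteq\mathrm{cl}^{*}\Big\{\int_{\Omega}\partial^{M}f(\omega,x)\,d\nu(\omega)+C\Big\},
\]
obtained by combining the fuzzy sum rule with the standard fact that a Fr\'echet subgradient of the lsc integral functional $I_{f}$ is, up to an arbitrarily small $\varepsilon$-ball, of the form $\int_{\Omega}\xi_{\varepsilon}(\omega)\,d\nu(\omega)+c$ with $\xi_{\varepsilon}(\omega)$ an ($\varepsilon$-)Fr\'echet subgradient of the integrand and $c\in C$; here the normal-integrand structure, the finiteness $I_{f}(x_{0})<+\infty$, and the $\sigma$-finiteness and completeness of $(\Omega,\mathcal{A},\nu)$ all enter through measurable-selection arguments. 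To pass to $\partial^{M}I_{f}(x_{0})$, I would take $x^{*}\in\partial^{M}I_{f}(x_{0})$, pick $x_{n}\to x_{0}$ with $I_{f}(x_{n})\to I_{f}(x_{0})$ and $x_{n}^{*}\rightharpoonup x^{*}$, $x_{n}^{*}\in\partial^{F}I_{f}(x_{n})$, and write (modulo a vanishing $\varepsilon_{n}$-ball) $x_{n}^{*}=\int_{\Omega}\xi_{n}(\omega)\,d\nu(\omega)+c_{n}$ with $\xi_{n}(\omega)\in\partial^{M}f(\omega,x_{n})$ and $c_{n}\in C$. The selections $\{\xi_{n}\}$ are uniformly $\nu$-integrable (dominated by $\mathcal{K}$ modulo $C$), so by a Dunford--Pettis and Mazur argument, together with $f(\omega,x_{n})\to f(\omega,x_{0})$ for $\nu$-a.e.\ $\omega$ along a subsequence (which follows from $I_{f}(x_{n})\to I_{f}(x_{0})$, lower semicontinuity of $f(\omega,\cdot)$ and Fatou) and the outer semicontinuity of $\partial^{M}f(\omega,\cdot)$, one extracts a selection $\xi(\omega)\in\partial^{M}f(\omega,x_{0})$ with $\int\xi_{n}\,d\nu\rightharpoonup\int\xi\,d\nu$. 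The cone parts $c_{n}$ stay bounded because, fixing $e\in\mathrm{int}\,C^{*}$, an interior-ball estimate gives $\|c_{n}\|_{*}\le\kappa(-\langle c_{n},e\rangle)$, and $\langle c_{n},e\rangle=\langle x_{n}^{*},e\rangle-\int_{\Omega}\langle\xi_{n}(\omega),e\rangle\,d\nu(\omega)$ is bounded below, since $\langle x_{n}^{*},e\rangle$ is bounded ($x_{n}^{*}\rightharpoonup x^{*}$) and $\int_{\Omega}\langle\xi_{n}(\omega),e\rangle\,d\nu(\omega)\le\|e\|\,\|\mathcal{K}\|_{1}$ (the $C$-part of $\xi_{n}(\omega)$ pairs non-positively with $e\in C^{*}$). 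Hence $c_{n}\rightharpoonup c\in C$ and $x^{*}=\int\xi\,d\nu+c$, which is (i); the Clarke-subdifferential inclusion then follows by combining (i) and (iii) with the representation (\ref{clarkesub}), together with $C+C=C$ and $\overline{\mathrm{co}}\,\mathrm{cl}^{*}(\cdot)=\overline{\mathrm{co}}(\cdot)$.

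For (ii) one repeats the same limiting argument in $\mathbb{R}^{n}$ and checks that the $\mathrm{cl}^{*}$ is superfluous: the Aumann integral of a measurable multifunction dominated (modulo $C$) by $\mathcal{K}\in L^{1}$ is closed, and since the $C$-valued parts of selections integrate into $C$, the set $\int_{\Omega}\partial^{M}f(\omega,x_{0})\,d\nu+C$ is already closed. For (iii), a singular subgradient is of the form $x^{*}=\lim\lambda_{n}x_{n}^{*}$ (weakly) with $\lambda_{n}\downarrow 0$, $x_{n}\to x_{0}$, $I_{f}(x_{n})\to I_{f}(x_{0})$ and $x_{n}^{*}\in\partial^{F}I_{f}(x_{n})$; by the Fr\'echet-level inclusion and (\ref{dominance}), $\partial^{F}I_{f}(x_{n})\subseteq\mathbb{B}^{*}_{\|\mathcal{K}\|_{1}}(0)+C$, so $\lambda_{n}x_{n}^{*}\in\mathbb{B}^{*}_{\lambda_{n}\|\mathcal{K}\|_{1}}(0)+C$, and letting $n\to\infty$ (with $C$ closed) gives $x^{*}\in C$.

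The main obstacle will be the $L^{1}$-weak-compactness and measurable-selection bookkeeping in the Fr\'echet-to-Mordukhovich passage: one must guarantee that the limiting selection actually lands in $\partial^{M}f(\omega,x_{0})$ for $\nu$-a.e.\ $\omega$ (rather than in some larger outer-limit set), and that the splitting ``integral part plus cone part'' survives passage to the weak-$*$ limit in $X^{*}$. This is precisely the point where $\sigma$-finiteness and completeness of $(\Omega,\mathcal{A},\nu)$, the $L^{1}$-domination by $\mathcal{K}$, and the nonemptiness of $\mathrm{int}\,C^{*}$ are all needed simultaneously; in the infinite-dimensional case, reflexivity of $X$ is what supplies the weak sequential compactness that closes the argument.
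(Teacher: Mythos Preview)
The paper does not contain a proof of this proposition: it is quoted verbatim from the external reference \cite{perez} (Correa, Hantoute and P\'erez-Aros), as indicated by the citation in the proposition heading, and is used in the paper only as a black box to derive Theorem~\ref{main}. There is therefore no ``paper's own proof'' to compare your proposal against.

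That said, your outline follows the shape one would expect for such an interchange result: a Fr\'echet-level approximate representation of subgradients of $I_{f}$ via integrable selections of $\partial^{F}f(\omega,\cdot)$, followed by a sequential limiting passage using Dunford--Pettis/Mazur weak $L^{1}$ compactness and outer semicontinuity of $\partial^{M}f(\omega,\cdot)$, with the cone part controlled through an element of $\mathrm{int}\,C^{*}$. The derivations of (iii) from the inclusion $\partial^{F}I_{f}(x)\subseteq\mathbb{B}^{*}_{\|\mathcal{K}\|_{1}}(0)+C$ and of (iv) from (i), (iii) and (\ref{clarkesub}) are standard and correct. The genuinely delicate step you flag yourself---that the Mazur-convex-combination limit of the selections $\xi_{n}$ lands $\nu$-a.e.\ in $\partial^{M}f(\omega,x_{0})$---is the heart of the matter and is not fully justified by what you wrote: Mazur gives a.e.\ convergence of convex combinations, but $\partial^{M}f(\omega,x_{0})$ need not be convex, so one typically needs an additional argument (e.g., passing through $\mathrm{co}\,\partial^{M}f(\omega,x_{0})$ and then invoking nonatomicity/Lyapunov-type convexity of the Aumann integral, or using a more refined extraction). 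If you intend to carry this through, that is the place to invest the effort; the rest of your sketch is sound.
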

 
\bigskip\noindent Now, we are in a position to prove the main result of our
paper.

\begin{theorem}
	\label{main}Let $x_{0}\in X$ be such that $g(x_{0},0)<0$. Assume that the cone $%
	C_{l}(x_{0})$ has a non-empty interior for some  $l>0$. Then,%
\begin{itemize}
\item[\rm(i)] $
		\partial ^{M}\varphi (x_{0})\subseteq  \mathrm{cl}^*\,\left\{
		\int\limits_{v\in \mathbb{S}^{m-1}}\partial _{x}^{M}e(x_{0},v)d\mu
		_{\zeta}(v)-C_{l}^{\ast }(x_{0})\right\} $
\item[\rm(ii)]  Provided that $X$ is finite-dimensional, 
$$\partial ^{M}\varphi (x_{0})\subseteq  
		\int\limits_{v\in \mathbb{S}^{m-1}}\partial _{x}^{M}e(x_{0},v)d\mu
		_{\zeta}(v)-C_{l}^{\ast }(x_{0}) .$$		
\item[\rm(iii)]  $\partial ^{\infty }\varphi (x_{0})\subseteq -C_{l}^{\ast }(x_{0}).$
\item[\rm(vi)]  $
		\partial ^{C}\varphi (x_{0})\subseteq\overline{\mathrm{co}}\,%
		\left\{ \int\limits_{v\in \mathbb{S}^{m-1}}\partial _{x}^{M}e(x_{0},v)d\mu
		_{\zeta }(v)-C_{l}^{\ast }(x_{0})\right\} .$
\end{itemize}
\end{theorem}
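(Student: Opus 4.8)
The plan is to read off all four inclusions directly from the interchange Proposition \ref{keyresult}, applied to the representation (\ref{fidef}) of $\varphi$ as an integral functional over the sphere. Concretely, I would set $\Omega:=\mathbb{S}^{m-1}$, equip it with the finite uniform measure $\mu_{\zeta}$, take as integrand $f(v,x):=e(x,v)$ so that $\varphi=I_{f}$ by (\ref{fidef}), and make the choice of cone $C:=-C_{l}^{\ast}(x_{0})$. Everything then reduces to checking the hypotheses of Proposition \ref{keyresult}: (a) $f$ is a nonnegative normal integrand with $I_{f}(x_{0})<+\infty$; (b) the domination (\ref{dominance}) holds near $x_{0}$ with an $L^{1}$ majorant; and (c) $C$ is a closed convex cone whose polar cone has nonempty interior.

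For (a), since $g(x_{0},0)<0$ and $g$ is continuous, there is $\delta>0$ with $\mathbb{B}_{\delta}(x_{0})\subseteq\{x\mid g(x,0)<0\}$; on $\mathbb{S}^{m-1}\times\mathbb{B}_{\delta}(x_{0})$ the function $e$ is nonnegative, jointly measurable (for each $(x,v)$ it is the $\mu_{\eta}$-measure of the closed set $\{r\geq 0\mid g(x,rLv)\leq 0\}$, whose indicator is jointly Borel), and continuous in $x$ for each fixed $v$ by Proposition \ref{econt}. Replacing $e$ outside a slightly smaller ball by $+\infty$ yields a normal integrand on all of $\Omega\times X$ agreeing with $e$ on a neighborhood of $x_{0}$; since $\partial^{M}\varphi(x_{0})$, $\partial^{\infty}\varphi(x_{0})$ and $\partial^{C}\varphi(x_{0})$ depend on $\varphi=I_{e}$ only near $x_{0}$, this localization is harmless. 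Finally $I_{f}(x_{0})=\varphi(x_{0})\leq 1<+\infty$.

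For (b), I would invoke Theorem \ref{domin} with the given $l>0$: it produces a neighborhood $U$ of $x_{0}$ and some $R>0$ with $\partial_{x}^{F}e(x,v)\subseteq\mathbb{B}_{R}^{\ast}(0)-C_{l}^{\ast}(x_{0})$ for all $x\in U$ and all $v\in\mathbb{S}^{m-1}$. Shrinking $\delta$ so that $\mathbb{B}_{\delta}(x_{0})\subseteq U$, this is exactly (\ref{dominance}) with the constant majorant $\mathcal{K}\equiv R$, which lies in $L^{1}(\mathbb{S}^{m-1},\mathbb{R})$ because $\mu_{\zeta}$ is finite, and with $C=-C_{l}^{\ast}(x_{0})$. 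For (c), $C$ is a closed convex cone, being the negative of a polar cone, and by the bipolar theorem its polar equals $(-C_{l}^{\ast}(x_{0}))^{\ast}=-\overline{\mathrm{co}}\,C_{l}(x_{0})$; hence the standing hypothesis that $C_{l}(x_{0})$ has nonempty interior guarantees (since $\overline{\mathrm{co}}\,C_{l}(x_{0})\supseteq C_{l}(x_{0})$, even though $C_{l}(x_{0})$ itself need not be convex) that the polar of $C$ has nonempty interior.

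With (a)--(c) in place, Proposition \ref{keyresult} applies. Its third conclusion gives $\partial^{\infty}\varphi(x_{0})\subseteq C=-C_{l}^{\ast}(x_{0})$, which is assertion (iii). Its remaining three conclusions, after substituting $C=-C_{l}^{\ast}(x_{0})$ and $\partial_{x}^{M}e(x_{0},v)$ for the Mordukhovich subdifferential of the integrand, yield the stated inclusions for $\partial^{M}\varphi(x_{0})$ with the weak$^{\ast}$ closure, the sharper version without the closure when $X$ is finite-dimensional, and the inclusion for $\partial^{C}\varphi(x_{0})$ in terms of the closed convex hull. I expect the only genuinely delicate point to be the normal-integrand bookkeeping in step (a) together with the localization argument that reconciles the a priori merely continuous $\varphi$ with the setting of Proposition \ref{keyresult}; all the substantive analytic work, namely the uniform Fr\'echet-subgradient bound driving (\ref{dominance}), has already been carried out in Theorem \ref{domin}.
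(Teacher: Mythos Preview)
Your proposal is correct and follows essentially the same route as the paper: apply Proposition~\ref{keyresult} with $\Omega=\mathbb{S}^{m-1}$, $f(v,x)=e(x,v)$, $C=-C_{l}^{\ast}(x_{0})$, use Theorem~\ref{domin} to supply the domination condition~(\ref{dominance}) with the constant majorant $\mathcal{K}\equiv R\in L^{1}(\mu_{\zeta})$, and read off (i)--(iv) directly. The only difference is that you spell out the normal-integrand localization more carefully than the paper, which simply cites the continuity of $e$ from Proposition~\ref{econt}; this extra care is harmless and, if anything, clarifies a point the paper treats rather tersely.
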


\begin{proof}
	We apply Proposition \ref{keyresult} by putting
	\begin{equation*}
		f\left( \omega ,x\right) :=e\left( x,\omega \right) ,\,C:=-C_{l}^{\ast
		}(x_{0}),
	\end{equation*}
	and using the measurable space  $(\mathbb{S}^{m-1},\mathcal{A},\mu _{\zeta })$,
with $\mathcal{A}$ being the $\sigma$-Algebra of measurable sets with respect to $\mu _{\zeta }$. 
It is known that $\mu _{\zeta }$ is $\sigma $-finite and complete. 
The measurability property of $f$ and
	the lower semicontinuity of $f(\omega ,\cdot )$ are consequences of the
	continuity of $e$ (see Proposition \ref{econt}).  The cone  $C^{*}=\overline{\mathrm{co}}\,%
	C_{l}(x_0)$ has a non-empty interior, by the current assumption. Condition (\ref{dominance}) is a
	consequence of Theorem \ref{domin} upon defining $\mathcal{K}(\omega ):=R$ for all 
	$\omega \in \Omega =\mathbb{S}^{m-1}$, and observing that $\mathcal{K}\in L^{1}(\mathbb{%
		S}^{m-1},\mathbb{R})$, due to $\mathbb{S}^{m-1}$ having finite ($\mu
	_{\zeta}$\,-) measure. Now, the claimed result follows from Proposition \ref%
	{keyresult} by taking into account that $I_{f}=\varphi $ thanks to (\ref%
	{fidef}).
\end{proof}

\smallskip

%

Our main result motivates some investigation about the impact of
the parameter $l>0$ in the definition of the cones $C_{l}^{\ast }(x_{0})$, $x_{0}\in X$. From Definition \ref%
{nicedirect}, it follows immediately that $(C_{l}(x_{0}))_{l\ge 0}$ forms a non-decreasing
family of closed cones, and hence%
\begin{equation}
C_{k}(x_{0})\subseteq C_{k+1}(x_{0});\quad C_{k}^{\ast }(x_{0})\supseteqq
C_{k+1}^{\ast }(x_{0})\quad \forall k\in \mathbb{N}.  \label{monotonefamily}
\end{equation}%
Moreover, $C_{k}(x_{0})$ having a non-empty interior as required in Theorem %
\ref{main}, implies that $C_{k+1}(x_{0})$ does so too. This means that the
upper estimates in the results of Theorem \ref{main} become increasingly
precise for $k\rightarrow \infty $. This immediately raises the question if
we may pass to the limit in this result. Let us then introduce the 
{\it{limiting cone of nice directions}}%
\begin{eqnarray*}
	&C_{\infty }(x_{0}):=\bigcup\limits_{k\in \mathbb{N}}C_{k}(x_{0})=& \\
	&\{h\in X\ |\ \exists k\in \mathbb{N:}\, g^{\circ}(\cdot ,z)(y;h)\leq 
	k \left\Vert z\right\Vert ^{-m}e^{\frac{\Vert z\Vert^2}{2\Vert L\Vert^2} }\left\Vert
	h\right\Vert, \forall y\in \mathbb{B}_{\frac{1}{k}}\left( x\right), \left\Vert z\right\Vert \geq k\}.&
\end{eqnarray*}%
The reader can simply notice (through Baire's Theorem) the non-emptiness of the interior of $%
C_{\infty }(x_{0})$ is equivalent to the non-emptiness of the interior of $C_{l}(x_0)$ for some 
$l>0$. As far as the singular subdifferential is concerned, we may
immediately pass to the limit:

\begin{proposition}
	\label{singsub}Fix $x_{0}\in X$ with $g(x_{0},0)<0$, and assume
	 that $C_{l}(x_{0})$ has a non-empty interior for  some $l>0$. Then $%
	\partial ^{\infty }\varphi (x_{0})\subseteq -C_{\infty }^{\ast }(x_{0})$.
\end{proposition}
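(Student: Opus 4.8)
The plan is to exploit the monotone family structure in~(\ref{monotonefamily}) together with Theorem~\ref{main}(iii). Fix $x_{0}\in X$ with $g(x_{0},0)<0$, and suppose $C_{l}(x_{0})$ has nonempty interior for some $l>0$. By the remark following the definition of the limiting cone (via Baire's theorem), nonemptiness of $\mathrm{int}\,C_{l}(x_{0})$ for some $l$ is equivalent to nonemptiness of $\mathrm{int}\,C_{\infty}(x_{0})$; in particular, since $(C_{k}(x_{0}))_{k\in\mathbb{N}}$ is nondecreasing, $C_{k}(x_{0})$ has nonempty interior for all $k$ large enough, say $k\ge k_{0}$. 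For each such $k$, Theorem~\ref{main}(iii) applies with parameter $l=k$ and gives
\begin{equation*}
	\partial^{\infty}\varphi(x_{0})\subseteq -C_{k}^{\ast}(x_{0})\qquad\forall k\ge k_{0}.
\end{equation*}
Hence $\partial^{\infty}\varphi(x_{0})\subseteq\bigcap_{k\ge k_{0}}\bigl(-C_{k}^{\ast}(x_{0})\bigr)=-\bigcap_{k\ge k_{0}}C_{k}^{\ast}(x_{0})$.

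The remaining point is to identify $\bigcap_{k}C_{k}^{\ast}(x_{0})$ with $C_{\infty}^{\ast}(x_{0})$. Since $C_{\infty}(x_{0})=\bigcup_{k}C_{k}(x_{0})$ and polarity reverses inclusions, every $x^{\ast}\in C_{\infty}^{\ast}(x_{0})$ satisfies $\langle x^{\ast},h\rangle\le 0$ for all $h\in C_{k}(x_{0})$ and every $k$, so $C_{\infty}^{\ast}(x_{0})\subseteq\bigcap_{k}C_{k}^{\ast}(x_{0})$. Conversely, if $x^{\ast}\in\bigcap_{k}C_{k}^{\ast}(x_{0})$ and $h\in C_{\infty}(x_{0})$, then $h\in C_{k}(x_{0})$ for some $k$, whence $\langle x^{\ast},h\rangle\le 0$; thus $x^{\ast}$ annihilates the whole union $C_{\infty}(x_{0})$ (no closure is needed here, as the defining inequality $\langle x^{\ast},\cdot\rangle\le 0$ is automatically satisfied on $\mathrm{cl}\,C_{\infty}(x_{0})$ by continuity). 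Therefore $\bigcap_{k}C_{k}^{\ast}(x_{0})=C_{\infty}^{\ast}(x_{0})$, and combining with the display above yields $\partial^{\infty}\varphi(x_{0})\subseteq-C_{\infty}^{\ast}(x_{0})$, as claimed.

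I do not expect a genuine obstacle here: the argument is essentially a bookkeeping exercise once Theorem~\ref{main}(iii) is in hand. The only mild subtlety is to confirm that Theorem~\ref{main} is legitimately invokable for all large $k$ — that is, that its hypothesis "$C_{l}(x_{0})$ has nonempty interior for some $l$" is met with the specific choice $l=k$ — which is exactly where monotonicity of the family $(C_{k}(x_{0}))_{k}$ and the Baire-type equivalence noted in the text are used. Everything else is the elementary fact that the polar of an increasing union of cones is the decreasing intersection of their polars.
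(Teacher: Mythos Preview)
Your proof is correct and follows essentially the same route as the paper: apply Theorem~\ref{main}(iii) for all sufficiently large $k$, intersect, and use that the polar of the increasing union $\bigcup_{k}C_{k}(x_{0})$ equals the decreasing intersection $\bigcap_{k}C_{k}^{\ast}(x_{0})$. The only superfluous step is your appeal to the Baire-type equivalence; plain monotonicity $C_{l}(x_{0})\subseteq C_{k}(x_{0})$ for $k\ge l$ already guarantees that each such $C_{k}(x_{0})$ inherits nonempty interior from $C_{l}(x_{0})$, so no Baire argument is needed.
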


\begin{proof}
	By Theorem \ref{main}(iii) we have that $\partial ^{\infty }\varphi
	(x_{0})\subseteq -C_{l}^{\ast }(x_{0})$. Since along with $C_{l} (x_{0})$
	the larger cones $C_{k}(x_{0})$ for $k\in \mathbb{N},k\geq l$, have non-empty
	interiors too, it follows that 
	\begin{equation*}
		\partial ^{\infty }\varphi (x_{0})\subseteq \bigcap_{k\in \mathbb{N},k\geq
			l}-C_{k}^{\ast }(x_{0})=-\left( \bigcup\limits_{k\in \mathbb{N}}C_{k}(x_{0})\right) ^{\ast
		}=-C_{\infty }^{\ast }(x_{0}),
	\end{equation*}%
	where the first equality relies on (\ref{monotonefamily}).
\end{proof}

\bigskip\noindent In order to formulate a corresponding result for the
Mordukhovich and Clarke subdifferentials, we need an additional boundedness
assumption:

\begin{proposition}
	Fix $x_{0}\in X$ with  $g(x_{0},0)<0$, and assume 
	 that $C_{l}(x_{0})$ has a non-empty interior for some $l>0$. Moreover, suppose that $%
	\partial _{x}^{M}e(x_{0},v)$ is integrably bounded; i.e., there exists some
	 function $\mathcal{R}:\mathbb{S}^{m-1}\rightarrow \mathbb{R}_+$ with $\int\limits_{\mathbb{S}^{m-1}}\mathcal{R}(v)d\mu _{\zeta }(v)<\infty$ 
	 such that 
	\begin{equation*}
		\partial
		_{x}^{M}e(x_{0},v)\subseteq \mathbb{B}_{\mathcal{R}(v)}^{\ast }(0)\quad \mu
		_{\zeta }-a.e.\ v\in \mathbb{S}^{m-1}.
	\end{equation*}%
	Then
	\begin{equation*}
		\partial ^{M}\varphi (x_{0})\subseteq \partial ^{C}\varphi (x_{0})\subseteq\mathrm{cl%
		}\left\{ \int\limits_{v\in \mathbb{S}^{m-1}}\partial
		_{x}^{M}e(x_{0},v)d\mu _{\zeta }(v)\right\} -C_{\infty }^{\ast }(x_{0}).
	\end{equation*}
\end{proposition}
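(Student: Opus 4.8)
The plan is to combine the finite-$l$ estimate of Theorem \ref{main} with the limiting procedure already established for the singular subdifferential in Proposition \ref{singsub}. The integrable boundedness hypothesis is exactly what is needed to convert the weak$^\ast$-closure and the inner convex-hull appearing in Theorem \ref{main}(i),(iv) into a mere strong closure, and to make the Aumann integral $\int_{\mathbb{S}^{m-1}}\partial_x^M e(x_0,v)\,d\mu_\zeta(v)$ a bounded (hence well-behaved) convex set. The inclusion $\partial^M\varphi(x_0)\subseteq\partial^C\varphi(x_0)$ is automatic from \eqref{clarkesub} together with $0\in\partial^\infty\varphi(x_0)$ (since normal cones are cones), so the real content is the second inclusion, which it suffices to prove for $\partial^C\varphi(x_0)$.

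First I would fix some $k_0\in\mathbb{N}$, $k_0\ge l$, for which $C_{k_0}(x_0)$ has nonempty interior; by \eqref{monotonefamily} all $C_k(x_0)$ with $k\ge k_0$ then do as well. Applying Theorem \ref{main}(iv) at each such level $k$ gives
\begin{equation*}
\partial^C\varphi(x_0)\subseteq\overline{\mathrm{co}}\,\Bigl\{\int_{v\in\mathbb{S}^{m-1}}\partial_x^M e(x_0,v)\,d\mu_\zeta(v)-C_k^\ast(x_0)\Bigr\}\qquad\forall k\ge k_0.
\end{equation*}
Under the integrable boundedness assumption, $A:=\int_{\mathbb{S}^{m-1}}\partial_x^M e(x_0,v)\,d\mu_\zeta(v)$ is contained in $\mathbb{B}^\ast_{\|\mathcal{R}\|_{L^1}}(0)$; it is also convex (Aumann integrals of measurable multifunctions over a nonatomic measure are convex, or one may simply replace $A$ by $\overline{\mathrm{co}}\,A$ throughout, which only weakens the estimate). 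Since $A$ is bounded and $C_k^\ast(x_0)$ is a closed convex cone, the Minkowski sum $A-C_k^\ast(x_0)$ is already convex, and I would argue it is closed: a bounded-plus-closed-cone sum is closed whenever the cone is closed and the bounded part is, say, weakly compact — here $A$ (or $\overline{\mathrm{co}}\,A$) is weakly compact because $X$ is reflexive and $A$ is bounded, closed and convex. Hence $\overline{\mathrm{co}}$ may be dropped (or reduced to a strong closure $\mathrm{cl}$) and we obtain $\partial^C\varphi(x_0)\subseteq\mathrm{cl}\{A\}-C_k^\ast(x_0)$ for every $k\ge k_0$.

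Next I intersect over $k$. Using $\bigcap_{k\ge k_0}\bigl(\mathrm{cl}\{A\}-C_k^\ast(x_0)\bigr)$ and the identity $\bigcup_k C_k(x_0)=C_\infty(x_0)$, hence $\bigcap_k C_k^\ast(x_0)=\bigl(\bigcup_k C_k(x_0)\bigr)^\ast=C_\infty^\ast(x_0)$ (this is the same computation used in the proof of Proposition \ref{singsub}, via \eqref{monotonefamily}), I want to pass to the limit. The one point requiring a genuine argument is that
\begin{equation*}
\bigcap_{k\ge k_0}\bigl(\mathrm{cl}\{A\}-C_k^\ast(x_0)\bigr)\subseteq\mathrm{cl}\{A\}-C_\infty^\ast(x_0),
\end{equation*}
which is not formal because $\mathrm{cl}\{A\}$ is not a cone and intersection of Minkowski differences need not equal the Minkowski difference by the intersection. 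I would handle this by a compactness/selection argument: if $x^\ast$ lies in every $\mathrm{cl}\{A\}-C_k^\ast(x_0)$, write $x^\ast=a_k-c_k$ with $a_k\in\mathrm{cl}\{A\}$, $c_k\in C_k^\ast(x_0)\subseteq C_{k_0}^\ast(x_0)$; then $c_k=a_k-x^\ast$ stays bounded (as $\mathrm{cl}\{A\}$ is bounded), so by reflexivity a subnet has $a_k\rightharpoonup a\in\mathrm{cl}\{A\}$ (weak closedness of the bounded closed convex set) and $c_k\rightharpoonup c=a-x^\ast$; finally $c\in\bigcap_{k\ge k_0}\mathrm{cl}^\ast\,C_k^\ast(x_0)=\bigcap_{k\ge k_0}C_k^\ast(x_0)=C_\infty^\ast(x_0)$, where weak$^\ast$-closedness of each polar cone is used. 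This yields $x^\ast\in\mathrm{cl}\{A\}-C_\infty^\ast(x_0)$ and completes the proof. The main obstacle is precisely this last interchange of intersection with the Minkowski difference; everything else is bookkeeping built on Theorem \ref{main}, \eqref{clarkesub}, \eqref{monotonefamily}, and reflexivity of $X$.
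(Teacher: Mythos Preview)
Your argument is correct and rests on the same ingredients as the paper's proof: boundedness of the Aumann integral from the integrable-boundedness hypothesis, weak($^\ast$)-compactness of its closed convex hull in the reflexive space $X$, convexity of the strong closure via the nonatomic measure $\mu_\zeta$, and the passage to the limit over $k$ using \eqref{monotonefamily}. The organization, however, differs. The paper starts from Theorem~\ref{main}(i) for $\partial^M\varphi(x_0)$, uses weak$^\ast$-compactness of $\mathrm{cl}^\ast\mathcal{I}$ to split the closure as $\mathrm{cl}^\ast\mathcal{I}-C_k^\ast(x_0)$, intersects over $k$, and only then upgrades to $\partial^C\varphi(x_0)$ via \eqref{clarkesub} together with Proposition~\ref{singsub}; the convexity of $\mathrm{cl}\,\mathcal{I}$ (from \cite{pucci}) is invoked at the very end to drop the $\overline{\mathrm{co}}$. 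You instead begin directly with the Clarke estimate Theorem~\ref{main}(iv), use convexity/compactness of $\mathrm{cl}\,A$ up front to remove the $\overline{\mathrm{co}}$, and then intersect. Your route is slightly more economical in that it bypasses the separate appeal to Proposition~\ref{singsub}; the paper's route has the side benefit of displaying the sharper intermediate Mordukhovich inclusion $\partial^M\varphi(x_0)\subseteq\mathrm{cl}^\ast\mathcal{I}-C_\infty^\ast(x_0)$ before convexifying. The compactness/subnet argument you spell out for interchanging the intersection with the Minkowski difference is exactly the step the paper alludes to when it writes ``using again the $w^\ast$-compactness of $\mathrm{cl}^\ast\mathcal{I}$''.
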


\begin{proof}
	For the purpose of abbreviation, put%
	\begin{equation*}
		\mathcal{I}:=\int\limits_{v\in \mathbb{S}^{m-1}}\partial
		_{x}^{M}e(x_{0},v)d\mu _{\zeta }(v).
	\end{equation*}%
	From our assumption on $\partial _{x}^{M}e(x_{0},v)$, being integrably
	bounded, it follows that $\mathcal{I}$ is bounded too. Consequently, $\mathrm{cl}%
	^{\ast }\mathcal{I}$ is $w^{\ast }$-compact. With $C_{l}(x_{0})$ having
	a non-empty interior, for all $k\in \mathbb{N}$ with $k\geq l$, from 
	Theorem \ref{main}(i)  it follows that%
	\begin{eqnarray*}
		\partial ^{M}\varphi (x_{0}) &\subseteq &\mathrm{cl}%
		^{\ast }\left\{ \mathcal{I}-C_{k}^{\ast }(x_{0})\right\} 
		=\mathrm{cl}^{\ast }\mathcal{I}-C_{k}^{\ast
		}(x_{0})\quad \forall k\geq l.
	\end{eqnarray*}%
 Due to (\ref{monotonefamily}), we may continue as 
	\begin{equation}
	\partial ^{M}\varphi (x_{0})\subseteq \bigcap_{k\in \mathbb{N}}\left\{ 
	\mathrm{cl}^{\ast }\mathcal{I}-C_{k}^{\ast }(x_{0})\right\},
	\label{firstinclu}
	\end{equation}%
	which in turn, using again the $w^{\ast }$-compacity of  $\mathrm{cl}%
	^{\ast }\mathcal{I}$, gives us 
	\begin{equation*}
		\partial ^{M}\varphi (x_{0})\subseteq \mathrm{cl}^{\ast }\mathcal{I}%
		-\bigcap_{k\in \mathbb{N}}C_{k}^{\ast }(x_{0})=\mathrm{cl}^{\ast }\mathcal{I}%
		-\left( \bigcup\limits_{k\in \mathbb{N}}C_{k}(x_{0})\right) ^{\ast }=\mathrm{%
			cl}^{\ast }\mathcal{I}-C_{\infty }^{\ast }(x_{0}).
	\end{equation*}%
	Now, by \cite[Theorem 3.57]{mordu}, by Proposition \ref{singsub}, and by convexity of $%
	C_{\infty }^{\ast }(x_{0})$, we arrive at 
	\begin{eqnarray*}
		\partial ^{C}\varphi (x_{0}) &=&\overline{\mathrm{co}}\,\left\{
		\partial ^{M}\varphi (x_{0})+\partial ^{\infty }\varphi (x_{0})\right\}\\
		&\subseteq&\overline{\mathrm{co}}\,\left\{ \mathrm{cl}^{\ast }%
		\mathcal{I}-C_{\infty }^{\ast }(x_{0})-C_{\infty }^{\ast }(x_{0})\right\} \\
		&=&\overline{\mathrm{co}}\,\left\{ \mathrm{cl}^{\ast }%
		\mathcal{I}-C_{\infty }^{\ast }(x_{0})\right\}.
	\end{eqnarray*}%
	Now, as a consequence of \cite[Theorem 3.1]{pucci}, the strong closure $\mathrm{cl\,}%
	\mathcal{I}$ is convex (the measure $\mu_\zeta$ being nonatomic), so that 
	$\mathrm{cl}^{\ast }\mathcal{I}=\mathrm{cl\,}%
	\mathcal{I}$ is convex, and the last inclusion above reads
	\begin{equation*}
		\partial ^{C}\varphi (x_{0})\subseteq\mathrm{cl}\mathcal{I}%
		-C_{\infty }^{\ast }(x_{0}).
	\end{equation*}%
	This finishes the proof of our proposition.
\end{proof}

\subsection{An illustrating example}

In the following, we provide an example which, on the one hand,
serves as an illustration of our main result Theorem \ref{main} and, on the
other hand, shows that even for a continuously differentiable inequality $%
g\left( x,\xi \right) \leq 0$, satisfying a basic constraint qualification,
the associated probability function $\varphi $ may fail to be
differentiable, actually even to be locally Lipschitzian (though it is
continuous due to the constraint qualification).

\begin{example}
	\label{illuex}Define the function $g:\mathbb{R}\times \mathbb{R}%
	^{2}\rightarrow \mathbb{R}$ by%
	\begin{equation*}
		g\left( x,z_{1},z_{2}\right) :=\alpha (x)e^{h\left( z_{1}\right) }+z_{2}-1,
	\end{equation*}%
	where%
	\begin{equation*}
		\alpha (x):=\left\{ 
		\begin{tabular}{ll}
			$x^{2}$ & $x\geq 0$ \\ 
			$0$ & $x<0,$%
		\end{tabular}%
		\right. 
		\end{equation*}
		\begin{equation*}
		 h\left( t\right) :=-1-4\log \left( 1-\Phi (t)\right) ;\quad
		\Phi (t):=\frac{1}{\sqrt{2\pi }}\int\limits_{-\infty }^{t}e^{-\tau
			^{2}/2}d\tau ,
	\end{equation*}%
	i.e., $\Phi $ is the distribution function of the one-dimensional standard
	normal distribution. Moreover, let $\xi $ have a bivariate standard normal
	distribution, i.e., 
	\begin{equation*}
		\xi =\left( \xi _{1},\xi _{2}\right) \sim \mathcal{N}\left( \left(
		0,0\right) ,\left( 
		\begin{array}{cc}
			1 & 0 \\ 
			0 & 1%
		\end{array}%
		\right) \right) .
	\end{equation*}%
	The following properties are shown in the Appendix:
	
	\begin{enumerate}
		\item $g$ is continuously differentiable.
		
		\item $g$ is convex in $\left( z_{1},z_{2}\right) $.
		
		\item $g\left( 0,0,0\right) <0$.
		
		\item $C_{1}(0)=\left( -\infty ,0\right] $.
		
		\item $\int\limits_{v\in \mathbb{S}^{1}}\partial _{x}^{M}e(0,v)d\mu
		_{\zeta }(v)\subseteq \left( -\infty ,0\right] $.
		
		\item $\varphi $ fails to be locally Lipschitzian in $0$.
	\end{enumerate}
	
	Observe that, by 1. and 2., $g$ satisfies our basic data
	assumptions, (H), and that 3. forces the probability function $\varphi $ to be
	continuous. 
	On the other hand, by 6., $\varphi $
	is not locally Lipschitzian -much less differentiable - in $0$ despite the
	continuous differentiability of $g$ and the satisfaction of Slater's
	condition. Now, Theorem \ref{main}(ii), along with 4. and 5. provides that 
	\begin{equation*}
		\partial ^{M}\varphi (0)\subseteq \left( -\infty ,0\right] -[0,\infty
		)=\left( -\infty ,0\right] ,\newline
		\quad \partial ^{\infty }\varphi (0)\subseteq \left( -\infty ,0\right] .
	\end{equation*}%
	On the other hand, analytical verification along with the formula for $%
	\varphi $ provided in the Appendix
	(or alternatively visual inspection of the graph of $\varphi $) yields that $\partial ^{M}\varphi
	(0)=\{0\}$ and $\partial ^{\infty }\varphi (0)=\left( -\infty ,0\right] $,
	so that the upper estimate for the singular subdifferential is strict, while
	the one for the basic subdifferential is not (nevertheless this upper
	estimate is nontrivial due to being smaller than the whole space).
\end{example}

\subsection{Lipschitz continuity and differentiability of $\protect\varphi $}\label{secc}

The following result on Lipschitz continuity of the probability
function $\varphi $ is an immediate consequence of Clarke's Theorem on
interchanging subdifferentiation and integration \cite[Theorem 2.7.2]{clarke}
and of Corollary \ref{uniformlipschitz}:

\begin{theorem}
	\label{lipprobfunc}Fix $x\in X$ such that $g(x,0)<0$. Under one of
	the alternative conditions {\rm(\ref{bounded})} or {\rm(\ref{allnice})}, the
	probability function $\varphi $ is locally Lipschitz near $x$ and the
	following estimate holds true{\rm:}%
	\begin{equation}
	\partial ^{C}\varphi (x)\subseteq \int\limits_{v\in \mathbb{S}%
		^{m-1}}\partial _{x}^{C}e(x,v)d\mu _{\zeta }(v).  \label{lipinclu}
	\end{equation}
\end{theorem}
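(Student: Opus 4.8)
The plan is to combine two ingredients that are already available in the paper. First, Corollary \ref{uniformlipschitz} tells us that under either hypothesis (\ref{bounded}) or (\ref{allnice}) the partial radial probability functions $e(\cdot,v)$, $v\in\mathbb{S}^{m-1}$, are uniformly locally Lipschitzian around $x$ with a Lipschitz constant $\ell$ independent of $v$; that is, there exist a neighborhood $U$ of $x$ and $\ell>0$ with $|e(y,v)-e(y',v)|\le\ell\|y-y'\|$ for all $y,y'\in U$ and all $v\in\mathbb{S}^{m-1}$. Since $\varphi(y)=\int_{\mathbb{S}^{m-1}}e(y,v)\,d\mu_\zeta(v)$ by (\ref{fidef}) and $\mu_\zeta$ is a probability measure, it follows immediately that $|\varphi(y)-\varphi(y')|\le\ell\|y-y'\|$ for $y,y'\in U$, so $\varphi$ is locally Lipschitzian near $x$. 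Second, to get the subdifferential inclusion (\ref{lipinclu}), I would invoke Clarke's theorem on interchanging subdifferentiation and integration, \cite[Theorem 2.7.2]{clarke}: for an integral functional of a family of uniformly Lipschitz integrands one has $\partial^C\!\left(\int e(\cdot,v)\,d\mu_\zeta(v)\right)(x)\subseteq\int\partial_x^C e(x,v)\,d\mu_\zeta(v)$, the right-hand side being understood in the Aumann sense.

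In writing this up I would be slightly careful about two formal points. The first is to confirm that the measurability hypothesis of Clarke's theorem is met: the map $(v,y)\mapsto e(y,v)$ is measurable in $v$ for fixed $y$ and (by the uniform Lipschitz bound together with continuity in $v$, which follows from Proposition \ref{econt}) jointly well-behaved enough that $v\mapsto\partial_x^C e(x,v)$ is a measurable multifunction with a measurable selection; this is precisely the setting in which \cite[Theorem 2.7.2]{clarke} is stated, so I would simply cite it. The second is to make sure the integrand family in Clarke's theorem is the one parametrized by $v\in\mathbb{S}^{m-1}$ with the probability measure $\mu_\zeta$, and that the common Lipschitz constant furnished by Corollary \ref{uniformlipschitz} provides the required $L^1$ Lipschitz-constant bound (here the constant function $\ell$, which is integrable since $\mu_\zeta(\mathbb{S}^{m-1})<\infty$).

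I do not expect any real obstacle here: the statement is explicitly announced in the excerpt as ``an immediate consequence'' of Clarke's Theorem \cite[Theorem 2.7.2]{clarke} and of Corollary \ref{uniformlipschitz}, and the proof is essentially the concatenation of those two facts. If there is a mildly delicate point, it is the verification of the measurable-selection/normal-integrand hypotheses needed to apply Clarke's interchange theorem, but these are standard and are already implicitly handled in the earlier use of Proposition \ref{keyresult}; one can also lean on the continuity of $e$ established in Proposition \ref{econt} to reduce measurability to a routine check. Thus the write-up is short: first state the neighborhood $U$ and constant $\ell$ from Corollary \ref{uniformlipschitz}, deduce local Lipschitzianity of $\varphi$ from (\ref{fidef}), then apply \cite[Theorem 2.7.2]{clarke} to obtain (\ref{lipinclu}).
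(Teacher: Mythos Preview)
Your proposal is correct and matches the paper's approach exactly: the paper states this theorem as an immediate consequence of Clarke's interchange theorem \cite[Theorem 2.7.2]{clarke} together with Corollary \ref{uniformlipschitz}, and provides no further argument. Your additional care about the measurability hypotheses is reasonable but not elaborated in the paper, which simply cites those two results.
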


\quad

\noindent The next result provides conditions for differentiability of the
probability function $\varphi $; recall that $\# A$ denotes the cardinal of a set $A$.

\begin{proposition}
	\label{strictdiff} In addition to the assumptions of Theorem {\rm\ref%
	{lipprobfunc}}, assume that%
	\begin{equation}
	\#\partial _{x}^{C}e(x,v)=1\quad \mu _{\zeta }\text{-a.e.}\,v\in \mathbb{S}%
	^{m-1}.  \label{singlevalued}
	\end{equation}%
	Then $\varphi $ is strictly differentiable at $x$ and 
	\begin{equation*}
		\nabla \varphi (x)=\int\limits_{v\in \mathbb{S}^{m-1}}\nabla
		_{x}e(x,v)d\mu _{\zeta }(v).
	\end{equation*}%
	Consequently, if $X$ is finite-dimensional and {\rm(\ref{singlevalued})} holds true 
	in some neighborhood of $x$, then $\varphi $ is even continuously
	differentiable at $x$.
\end{proposition}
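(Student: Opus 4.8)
The plan is to derive strict differentiability of $\varphi$ at $x$ by combining the Lipschitz estimate from Theorem \ref{lipprobfunc} with the single-valuedness hypothesis (\ref{singlevalued}), and then to upgrade to $C^1$ in the finite-dimensional case via a standard argument. First I would recall that under the assumptions of Theorem \ref{lipprobfunc}, $\varphi$ is locally Lipschitz near $x$, so $\partial^C\varphi(x)$ coincides with the classical Clarke subdifferential of a locally Lipschitzian function, and moreover $\partial^C\varphi(x)\subseteq\int_{\mathbb{S}^{m-1}}\partial_x^C e(x,v)\,d\mu_\zeta(v)$ by (\ref{lipinclu}). Under (\ref{singlevalued}), for $\mu_\zeta$-a.e.\ $v$ the set $\partial_x^C e(x,v)$ is a singleton; since a locally Lipschitzian function has a single-valued Clarke subdifferential at a point exactly when it is strictly differentiable there, we get that $e(\cdot,v)$ is strictly differentiable at $x$ with $\partial_x^C e(x,v)=\{\nabla_x e(x,v)\}$ for a.e.\ $v$. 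The Aumann integral of an a.e.\ single-valued, integrably bounded multifunction reduces to the Bochner integral of its (measurable) selection, so the right-hand side of (\ref{lipinclu}) is the singleton $\{\int_{\mathbb{S}^{m-1}}\nabla_x e(x,v)\,d\mu_\zeta(v)\}$. Hence $\partial^C\varphi(x)$ is contained in a singleton, and being nonempty (Clarke subdifferential of a locally Lipschitzian function), it equals that singleton; a locally Lipschitzian function whose Clarke subdifferential at $x$ is a singleton is strictly differentiable at $x$, which gives the formula for $\nabla\varphi(x)$.

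For the measurability of $v\mapsto\nabla_x e(x,v)$ — needed to make sense of the Bochner integral — I would invoke the measurability built into the normal-integrand framework already used in Proposition \ref{keyresult}: the multifunction $v\mapsto\partial_x^C e(x,v)$ (equivalently $\partial_x^M e(x,v)$ where it is a singleton) is measurable, and a measurable multifunction that is a.e.\ single-valued has a measurable selection coinciding a.e.\ with its unique value, so $v\mapsto\nabla_x e(x,v)$ is measurable; integrable boundedness comes from Corollary \ref{loclipe}(i) together with the compactness argument over $\mathbb{S}^{m-1}$ exactly as in Corollary \ref{uniformlipschitz}, so the integral is well defined.

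For the final consequence, assume $X$ is finite-dimensional and (\ref{singlevalued}) holds throughout a neighborhood $V$ of $x$. Then the first part applies at every $x'\in V$, so $\varphi$ is (strictly, in particular Fréchet) differentiable on $V$ with $\nabla\varphi(x')=\int_{\mathbb{S}^{m-1}}\nabla_x e(x',v)\,d\mu_\zeta(v)$. To get continuity of $x'\mapsto\nabla\varphi(x')$ I would use the uniform local Lipschitz bound on the $e(\cdot,v)$ from Corollary \ref{uniformlipschitz} (valid under (\ref{bounded}) or (\ref{allnice})), which provides a common integrable bound $\alpha$ for $\|\nabla_x e(x',v)\|$ on $V$; together with continuity in $x'$ of $\nabla_x e(x',v)$ for a.e.\ fixed $v$ (a locally Lipschitzian function that is strictly differentiable on an open set has continuous derivative there, applied to $e(\cdot,v)$), Lebesgue's dominated convergence theorem yields continuity of $\nabla\varphi$ on $V$, i.e.\ $\varphi\in C^1$ near $x$.

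The main obstacle I anticipate is the measurable-selection/integration bookkeeping: one must be careful that the pointwise identity $\partial_x^C e(x,v)=\{\nabla_x e(x,v)\}$ holds off a $\mu_\zeta$-null set only, that this null set can be handled by working with the Aumann integral (whose value is unaffected by modifying the integrand on a null set), and that the Clarke subdifferential of a locally Lipschitzian function being single-valued is genuinely equivalent to strict differentiability (Clarke, \cite[Proposition 2.2.4]{clarke}) — as opposed to mere Gâteaux or Fréchet differentiability — so that the upgrade in the finite-dimensional case to continuous differentiability is legitimate. Everything else is a routine assembly of the already-established results.
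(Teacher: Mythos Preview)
Your argument for the first assertion (strict differentiability and the gradient formula) is correct and follows the same route as the paper: the inclusion (\ref{lipinclu}) together with (\ref{singlevalued}) forces $\partial^C\varphi(x)$ to be a singleton, and then \cite[Proposition 2.2.4]{clarke} gives strict differentiability. Your additional remarks on measurability and integrable boundedness of $v\mapsto\nabla_x e(x,v)$ are fine elaborations of what the paper leaves implicit.

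For the $C^1$ assertion, however, your dominated-convergence route has a quantifier gap. The hypothesis says that for every $x'\in V$ there is a $\mu_\zeta$-null set $N_{x'}$ off which $\partial_x^C e(x',v)$ is a singleton; you need instead a single null set $N$ such that for $v\notin N$ the function $e(\cdot,v)$ is strictly differentiable on all of $V$ (so that $x'\mapsto\nabla_x e(x',v)$ is continuous and dominated convergence applies). Passing from ``$\forall x'\,\exists N_{x'}$'' to ``$\exists N\,\forall x'$'' is not justified here: even using a countable dense subset of $V$, upper semicontinuity of $\partial_x^C e(\cdot,v)$ does not let you propagate single-valuedness from a dense set to the whole neighborhood. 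The paper avoids this entirely by applying \cite[Corollary to Prop.~2.2.4]{clarke} directly to $\varphi$: once the first part shows that $\partial^C\varphi(x')$ is a singleton for every $x'\in V$, that corollary immediately yields $\varphi\in C^1$ near $x$, with no need to touch the integrand. You should replace your dominated-convergence argument by this one-line application.
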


\begin{proof}
	Assumption (\ref{singlevalued}) entails that the integral in (\ref{lipinclu}%
	) reduces to a singleton. On the other hand, the subdifferential on the
	left-hand side of (\ref{lipinclu}) is nonempty, since $\varphi $ is locally
	Lipschitz near $x$ (see \cite[Proposition 2.1.2]{clarke}). Hence, the
	inclusion (\ref{lipinclu}) yields the single-valuedness of $\partial
	^{C}\varphi (x)$ as well as the equality 
	\begin{equation*}
		\partial ^{C}\varphi (x)= \int\limits_{v\in \mathbb{S}%
			^{m-1}}\partial _{x}^{C}e(x,v)d\mu _{\zeta }(v).
	\end{equation*}%
	Now, since a locally Lipschitzian function reducing to a singleton at some
	point is strictly differential at this point with gradient equal to the
	(single-valued) subdifferential (see \cite[Proposition 2.2.4]{clarke}), it follows
	that $\varphi $ is strictly differentiable at $x_{0}$ and $\partial
	^{C}\varphi (x_{0})=\left\{ \nabla \varphi (x_{0})\right\} $. Likewise, the
	local Lipschitz continuity of $e(\cdot ,v)$ around $x_{0}$ for all $v\in 
	\mathbb{S}^{m-1}$ (see Corollary \ref{uniformlipschitz}) yields along with (\ref%
	{singlevalued}) that 
	\begin{equation*}
		\partial _{x}^{C}e(x_{0},v)=\{\nabla _{x}e(x_{0},v)\}\quad \mu _{\zeta
		}-a.e.\,v\in \mathbb{S}^{m-1}.
	\end{equation*}%
	Altogether, we have shown the first assertion of our Proposition. The second
	assertion on continuous differentiability follows from \cite[Corollary to Prop.
	2.2.4]{clarke}.
\end{proof}

\subsection{Application to a finite system of smooth inequalities}\label{secc2}

In order to benefit from Theorem \ref{main}, one has to be able to
express the integrand $\partial _{x}^{M}e(x_{0},v)$ in terms of the initial
data in (\ref{probcons}), i.e., in terms of the function $g$. We will
illustrate this for the case of a probability function defined over a finite
system of continuously differentiable inequalities which are convex in their
second argument:%
\begin{equation}
\varphi (x):=\mathbb{P}\left( g_{i}\left( x,\xi \right) \leq 0,\ 
i=1,\ldots ,p\right),\  x\in X.  \label{probfunc2}
\end{equation}%
Clearly, this can be recast in the form of (\ref{probcons}) upon defining 
\begin{equation}
g:=\max\limits_{i=1,\ldots ,p}g_{i},  \label{maxfunc}
\end{equation}%
where $g$ is locally Lipschitz as required and convex in the second argument
because the $g_{i}$'s are supposed to be so. Since $g\left( x,0\right) <0$
implies that $g_{i}\left( x,0\right) <0$ for all $i=1,\ldots ,p$, we may
associate with each component a function $\rho _{i}$ satisfying the relation 
$g_{i}\left( x,\rho _{i}\left( x,v\right) Lv\right) =0$, as we did in (\ref%
{rhodef}). The relation between $\rho $ associated via (\ref{rhodef}) with $%
g $ in (\ref{maxfunc}) is, clearly,
\begin{equation}
\rho \left( x,v\right) =\min_{i=1,\ldots ,p}\rho _{i}\left( x,v\right) \quad
\forall x:g\left( x,0\right) <0,\,\,\forall v\in F(x).  \label{minfunc}
\end{equation}%
Note, however, that unlike $\rho $, the functions $\rho _{i}$ are
continuously differentiable because the $g_{i}$'s are so. This is a
consequence of the Implicit Function Theorem (see \cite[Lemma 3.2]{ackooij}),
which moreover yields  the gradient formulae, for all $x$ with $g\left( x,0\right) <0$ and all $v\in F(x)$,%
\begin{eqnarray*}
\nabla _{x}\rho _{i}\left( x,v\right) =-\frac{1}{\left\langle \nabla
	_{z}g_{i}\left( x,\rho \left( x,v\right) Lv\right) ,Lv\right\rangle }%
\nabla _{x}g_{i}\left( x,\rho \left( x,v\right) Lv\right)
\label{gradmin},\ i=1,\ldots ,p. 
\end{eqnarray*}%
In the following proposition, we provide an explicit upper estimate of the subdifferential set $%
\partial _{x}^{M}e(x_{0},v)$ in terms of the initial data, which can be used
in the formula of Theorem \ref{main}  to get an upper estimate for
the subdifferential of the probability function (\ref{probfunc2}):

\begin{proposition}
	\label{mordexplic}Fix $x\in X$ such that $g_{i}\left( x,0\right) <0$
	for $i=1,\ldots ,p$. Then, for every $l>0$, there exists some $R>0$ such that
	the radial probability function associated with $g$ in {\rm(\ref{maxfunc})} via {\rm(\ref{edef})}
	satisfies%
	\begin{equation*}
		\partial _{x}^{M}e(x,v)\subseteq \left\{ 
		\begin{array}{ll}
			-\bigcup\limits_{i\in T(v)}\left\{ \frac{\chi \left( \rho \left(
				x,v\right) \right) }{\left\langle \nabla _{z}g_{i}\left( x,\rho \left(
				x,v\right) Lv\right) ,Lv\right\rangle }\nabla _{x}g_{i}\left( x,\rho
			\left( x,v\right) Lv\right) \right\} & v\in F\left( x\right) \\ 
			&\\
			\mathbb{B}_{R}^{\ast }\left( 0\right) -C_{l}^{\ast }(x) & v\in I\left(
			x\right).%
		\end{array}%
		\right.
	\end{equation*}%
	Here, $T(v):=\left\{ i\in \left\{ 1,\ldots ,p\right\} \ |\ \rho _{i}\left(
	x,v\right) =\rho \left( x,v\right) \right\} $.
\end{proposition}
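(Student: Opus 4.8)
The plan is to split the argument according to whether $v\in F(x)$ or $v\in I(x)$, treating the two branches of the claimed formula separately, and to reduce everything to the general estimates already established for the single function $g=\max_i g_i$.

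First I would handle the case $v\in I(x)$, which is the easy one: this is nothing but part (ii) of Corollary \ref{loclipe} combined with Theorem \ref{domin}. Indeed, since $g_i(x,0)<0$ for all $i$ we have $g(x,0)<0$, so Theorem \ref{domin} applies and yields neighborhoods and a constant $R>0$ with $\partial_x^F e(x,v)\subseteq \mathbb{B}_R^*(0)-C_l^*(x)$; passing to the Mordukhovich subdifferential via a limiting argument (using the weak$^*$-closedness built into the definition of $N^M$, together with the fact that $\mathbb{B}_R^*(0)-C_l^*(x)$ is weak$^*$-closed since $\mathbb{B}_R^*(0)$ is weak$^*$-compact and $C_l^*(x)$ is a weak$^*$-closed cone) gives the stated inclusion on the $I(x)$ branch. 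Care must be taken that the perturbation of $v$ to nearby $v_k\in F(x_k)$, which arises in the limiting-subdifferential passage, stays controlled: here Lemma \ref{opendom} and part 2 of Lemma \ref{rholem} guarantee the relevant radius estimates remain uniform in a neighborhood, so Theorem \ref{domin} covers this uniformly.

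Next I would treat the main case $v\in F(x)$. The key structural fact is that, by the Implicit Function Theorem (\cite[Lemma 3.2]{ackooij}), each $\rho_i(\cdot,v)$ is continuously differentiable near $x$ with the gradient formula recalled just above, and $\rho(x,v)=\min_i\rho_i(x,v)$ by (\ref{minfunc}). Since $e(\cdot,v)=F_\eta(\rho(\cdot,v))$ near $x$ with $F_\eta$ continuously differentiable and $F_\eta'=\chi\ge 0$, we get $\partial_x^M e(x,v)=\chi(\rho(x,v))\,\partial_x^M\rho(\cdot,v)(x)$ by the chain rule for the limiting subdifferential under a nondecreasing smooth outer function (\cite[Theorem 3.57]{mordu} or the analogous calculus in \cite{kruger}; note $\chi(\rho(x,v))>0$ when $\rho(x,v)>0$, which holds since $g(x,0)<0$). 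It then remains to compute $\partial_x^M\big(\min_i\rho_i(\cdot,v)\big)(x)$. For a pointwise minimum of finitely many $C^1$ functions, the Mordukhovich subdifferential at $x$ is contained in $\bigcup_{i\in T(v)}\{\nabla_x\rho_i(x,v)\}$, where $T(v)$ is the active index set — this is a standard fact (the limiting subdifferential of a min of smooth functions; e.g. it follows from the max-rule applied to $-\rho_i$, or directly from the description of normal cones to epigraphs of $\min$-type functions). Substituting the explicit gradient formula for $\nabla_x\rho_i(x,v)$ and multiplying by $\chi(\rho(x,v))$ produces exactly the stated union on the $F(x)$ branch. The uniform choice of $R$ (depending only on $l$, not on the particular $v$) for the $I(x)$ branch comes, as in the proof of Theorem \ref{domin}, from a compactness argument over $\mathbb{S}^{m-1}$.

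The main obstacle I anticipate is the chain rule step $\partial_x^M e(x,v)=\chi(\rho(x,v))\,\partial_x^M\rho(\cdot,v)(x)$ together with the $\min$-rule for the limiting subdifferential: both are \emph{equalities-or-inclusions} that require care about the direction of inclusion and about regularity hypotheses. The cleanest route is to observe that $\rho_i(\cdot,v)$ being $C^1$ makes each $\rho_i$ strictly differentiable, so the calculus rules apply as equalities up to the min-structure, and the min of strictly differentiable functions has its limiting subdifferential contained in the union of the gradients of the active ones — which is all we need, since the proposition only claims an upper estimate. One should also double-check that the active set $T(v)$ and the gradients vary measurably in $v$ so that the resulting set-valued map is a legitimate integrand in Theorem \ref{main}, though strictly speaking that measurability issue belongs to the subsequent application rather than to the proof of this proposition itself.
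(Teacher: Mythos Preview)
Your proposal is correct and follows essentially the same route as the paper: for $v\in I(x)$ invoke Theorem~\ref{domin} and pass from the Fr\'echet to the Mordukhovich subdifferential using that $\mathbb{B}_R^*(0)-C_l^*(x)$ is weak$^*$-closed; for $v\in F(x)$ use the chain rule $\partial_x^M e(x,v)=\chi(\rho(x,v))\,\partial_x^M\rho(x,v)$ (the paper derives this by first establishing $\partial_x^F e(y,v)=\chi(\rho(y,v))\,\partial_x^F\rho(y,v)$ on a neighborhood, via the proof of Theorem~\ref{chainrule}, and then passing to the limit) together with the min-rule \cite[Prop.~1.113]{mordu} for $\rho=\min_i\rho_i$. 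One small confusion to clean up: in the limiting passage defining $\partial_x^M e(x,v)$ only $x$ is perturbed while $v$ remains fixed, so your remark about ``perturbation of $v$ to nearby $v_k\in F(x_k)$'' and the appeal to Lemma~\ref{opendom} and Lemma~\ref{rholem}(2) in the $I(x)$ branch are unnecessary---Theorem~\ref{domin} already supplies the Fr\'echet inclusion uniformly for all $x$ in a neighborhood (and all $v\in\mathbb{S}^{m-1}$), which is all that is needed.
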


\begin{proof}
	Fix an arbitrary $v\in \mathbb{S}^{m-1}$. Given the continuity of $e$, we
	exploit the following representation \cite[Theorem 2.34]{mordu} of the
	Mordukhovich subdifferential in terms of the Fr\'{e}chet subdifferential, which holds true
	in Asplund spaces (hence, in particular for reflexive Banach spaces) 
	\begin{align*}
		x^{\ast }\in \partial _{x}^{M}e(x,v)&\Longleftrightarrow \exists\,
		x_{n}\rightarrow _{n}x \text{ and }\exists\, x_{n}^{\ast }\rightharpoonup _{n}x^{\ast
		}: \ x_{n}^{\ast }\in \partial _{x}^{F}e(x_{n},v).
	\end{align*}%
	Then, the inclusion $\partial _{x}^{M}e(x,v)\subseteq \mathbb{B}%
	_{R}^{\ast }\left( 0\right) -C_{l}^{\ast }(x)$ follows from Theorem %
	\ref{domin}, since  $\mathbb{B}_{R}^{\ast }\left( 0\right)$ is weak*-compact and 
	$C_{l}^{\ast }(x)$ is weak*-closed, entailing that $ \mathbb{B}%
	_{R}^{\ast }\left( 0\right) -C_{l}^{\ast }(x)$  is weak*-closed. 
	This yields the desired estimate of $\partial _{x}^{M}e(x,v)$ when $v\in I \left(x\right)$.  
	
	Suppose now in addition that $v\in F\left(
	x\right) $, and, according to Lemma \ref{opendom}, let $U$ be a neighborhood of $x$ such that,
	for all $y\in U$,
	\begin{equation*}
		g\left( y,0\right) <0,\,\,v\in F(y).
	\end{equation*}%
	From the proof of Theorem \ref{chainrule} we have seen that%
	\begin{equation*}
		\partial _{x}^{F}e(y,v)=\chi \left( \rho (y,v)\right)  \partial
		_{x}^{F}\rho (y,v),\quad \forall y\in U,
	\end{equation*}%
	which, by continuity of $\chi $ and by 1. in Lemma \ref{rholem}, immediately
	entails that 
	\begin{equation*}
		\partial _{x}^{M}e(x,v)=\chi \left( \rho (x,v)\right)  \partial
		_{x}^{M}\rho (x,v).
	\end{equation*}%
	From (\ref{minfunc}) and the calculus rule for minimum functions \cite[Proposition
	1.113]{mordu} we conclude that%
	\begin{equation*}
		\partial _{x}^{M}\rho (x,v)\subseteq \bigcup\limits_{i\in T(v)}\nabla
		_{x}\rho _{i}(x,v).
	\end{equation*}%
	with $T(v)$ being defined as in the statement of the Proposition. Now, the assertion
	follows from (\ref{gradmin}).
\end{proof}

\bigskip\noindent We provide next a concrete characterization for the local
Lipschitz continuity/differentiability of the probability function $\varphi$, defined in  (\ref{probfunc2}), 
along with an explicit subdifferential/gradient formula:

\begin{theorem}
	\label{clarkeappli}Fix $x_{0}\in X$ with $g\left( x_{0},0\right) <0$,
	and assume that for some $l>0$ it holds, for $i=1,\ldots ,p$,
	\begin{equation}
	\left\Vert \nabla _{x}g_{i}(x,z)\right\Vert \leq l \left\Vert
	z\right\Vert ^{-m}e^{\frac{\left\Vert z\right\Vert^2}{2\Vert L \Vert^2} }\quad \forall x\in 
	\mathbb{B}_{1/l}\left( x_{0}\right), \,\left\Vert z\right\Vert
	\geq l.  \label{growth}
	\end{equation}%
	Then the probability function {\rm(\ref{probfunc2})} is locally Lipschitz near $%
	x_{0}$ and there exists a nonnegative number $R\leq \sup\{ \| x^*\| \mid x^*\in    \partial^M_x e(x_0,v) \text{ and } v\in I(x_0)\}$ such that
	\begin{eqnarray*}
		\partial ^{C}\varphi (x_{0})&\subseteq& -\int\limits_{v\in F(x_{0})}\mathrm{%
			co\,}\left\{ \bigcup_{i\in T(v)}\frac{\chi \left( \rho \left( x_{0},v\right) \right) 
			\nabla _{x}g_{i}\left( x_0,\rho \left( x_{0},v\right)Lv\right) }{%
			\left\langle \nabla _{z}g_{i}\left( x_0,\rho \left( x_{0},v\right) Lv\right)
			,Lv\right\rangle }\right\} d\mu _{\zeta }(v) \\
			&&\\
		&&+\mu _{\zeta }(I(x_{0})) \mathbb{B}^{*}_{R}\left( 0\right) .
	\end{eqnarray*}
\end{theorem}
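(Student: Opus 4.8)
The plan is to verify that the hypotheses of Theorem~\ref{lipprobfunc} are met under condition~(\ref{growth}), and then to unpack the abstract Clarke estimate (\ref{lipinclu}) using the explicit integrand computed in Proposition~\ref{mordexplic}. First I would observe that (\ref{growth}) is precisely the statement that the common function $g=\max_i g_i$ has all its directions nice at $x_0$ at the level $l$: indeed, for any $h\in X$ the directional derivative $g^{\circ}(\cdot,z)(y;h)$ of the smooth max-function is bounded above by $\max_i\langle\nabla_x g_i(y,z),h\rangle\le(\max_i\|\nabla_x g_i(y,z)\|)\|h\|$, so (\ref{growth}) gives $C_l(x_0)=X$, i.e. the alternative condition~(\ref{allnice}) holds. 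Hence Theorem~\ref{lipprobfunc} applies: $\varphi$ is locally Lipschitz near $x_0$ and $\partial^C\varphi(x_0)\subseteq\int_{\mathbb{S}^{m-1}}\partial_x^C e(x_0,v)\,d\mu_\zeta(v)$.

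Next I would split the sphere as $\mathbb{S}^{m-1}=F(x_0)\cup I(x_0)$ and treat the two pieces of the integral separately. On $F(x_0)$, since each $g_i$ is continuously differentiable and convex in $z$, Theorem~\ref{chainrule} together with Proposition~\ref{mordexplic} shows $\partial_x^M e(x_0,v)\subseteq-\bigcup_{i\in T(v)}\{\chi(\rho(x_0,v))\langle\nabla_z g_i(x_0,\rho Lv),Lv\rangle^{-1}\nabla_x g_i(x_0,\rho Lv)\}$; passing to $\partial_x^C e(x_0,v)=\overline{\mathrm{co}}\,\partial_x^M e(x_0,v)$ (using (\ref{clarkesub}) and the fact that local Lipschitzness of $e(\cdot,v)$ forces $\partial^\infty_x e(x_0,v)=\{0\}$) yields the convex-hull integrand $-\mathrm{co}\{\bigcup_{i\in T(v)}\cdots\}$ appearing in the statement. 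On $I(x_0)$, Proposition~\ref{mordexplic} gives $\partial_x^M e(x_0,v)\subseteq\mathbb{B}_R^*(0)-C_l^*(x)$, but since $C_l(x_0)=X$ we have $C_l^*(x_0)=\{0\}$, so in fact $\partial_x^C e(x_0,v)\subseteq\mathbb{B}_R^*(0)$ for $v\in I(x_0)$, with $R$ chosen as the supremum of $\|x^*\|$ over $x^*\in\partial_x^M e(x_0,v)$, $v\in I(x_0)$ (finite by Corollary~\ref{uniformlipschitz} / Theorem~\ref{domin}). Adding the two contributions and using $\int_{I(x_0)}\mathbb{B}_R^*(0)\,d\mu_\zeta(v)=\mu_\zeta(I(x_0))\mathbb{B}_R^*(0)$ gives the claimed inclusion.

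The step I expect to require the most care is the passage from the Mordukhovich-subdifferential integrand of Proposition~\ref{mordexplic} to the Clarke integrand on $F(x_0)$, and in particular the justification that taking the convex hull inside the integral is legitimate: one needs $\partial_x^C e(x_0,v)=\overline{\mathrm{co}}\,\partial_x^M e(x_0,v)=\mathrm{co}\,\partial_x^M e(x_0,v)$ (the set being finite-dimensional and bounded, hence its convex hull already closed), and then that $\int\mathrm{co}(\cdot)\,d\mu=\int(\cdot)\,d\mu$ when the measure is nonatomic (Lyapunov-type convexification, as used in the previous proposition via \cite[Theorem 3.1]{pucci}). A secondary point to check is that the selection $v\mapsto T(v)$ and the integrand are measurable so the Aumann integral is well defined, and that the bound on $R$ quoted in the statement is consistent with Corollary~\ref{loclipe}. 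Everything else is bookkeeping with the formulae already established.
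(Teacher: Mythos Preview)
Your proposal is correct and follows essentially the same route as the paper's proof: verify $C_l(x_0)=X$ from (\ref{growth}) via the directional derivative of the max-function (the paper names Danskin's Theorem and Clarke-regularity explicitly here), apply Theorem~\ref{lipprobfunc} to get the integral estimate (\ref{lipinclu}), split over $F(x_0)\cup I(x_0)$, and on each piece replace $\partial_x^C e(x_0,v)=\overline{\mathrm{co}}\,\partial_x^M e(x_0,v)$ by the explicit upper estimate from Proposition~\ref{mordexplic} (using $C_l^*(x_0)=\{0\}$ on $I(x_0)$).

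One remark: the Lyapunov/Pucci convexification you flag as a delicate point is not needed here and the paper does not invoke it. The target formula already carries the $\mathrm{co}$ inside the integral, and since Proposition~\ref{mordexplic} gives $\partial_x^M e(x_0,v)$ contained in a finite set of gradients for $v\in F(x_0)$, one has directly $\partial_x^C e(x_0,v)\subseteq -\mathrm{co}\{\bigcup_{i\in T(v)}\cdots\}$, whence the Aumann-integral inclusion is immediate without any nonatomicity argument. Your other care points (closedness of the convex hull of a finite set, measurability of the integrand, the choice of $R$) are legitimate but routine.
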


\begin{proof}
	As a maximum of finitely many smooth functions, $g$ is Clarke-regular, so
	that Clarke's directional derivative of $g$ coincides with its usual
	directional derivative. Hence, by Danskin's Theorem and by (\ref{growth}),
	we get the following estimate, for all $h\in X$, $x\in \mathbb{B}_{1/l}\left( x_{0}\right)$
	and $\left\Vert z\right\Vert \geq l$,
	\begin{eqnarray*}
		g^{\circ}(\cdot ,z)(x;h) &=&\left\langle \nabla _{x}g(x,z),h\right\rangle \\
		&=&\max
		\left\{ \left\langle \nabla _{x}g_{i}(x,z),h\right\rangle
		g_{i}(x,z)=g(x,z)\right\} \\
		&\leq &\max_{i=1,\ldots ,p}\left\langle \nabla _{x}g_{i}(x,z),h\right\rangle
		\leq l \left\Vert z\right\Vert ^{-m}e^{\frac{\left\Vert z\right\Vert^2}{2\Vert L\Vert^2}}\left\Vert h\right\Vert.
	\end{eqnarray*}%
	Hence, $C_{l}(x_{0})=X$ and, so,  Theorem \ref{lipprobfunc} guarantees that $%
	\varphi $ in (\ref{probfunc2}) is locally Lipschitz near $x_{0}$ and that 
	\begin{equation}
	\partial ^{C}\varphi (x_{0})\subseteq \int\limits_{v\in F(x_{0})}\partial
	_{x}^{C}e(x_{0},v)d\mu _{\zeta }(v)+\int\limits_{v\in I(x_{0})}\partial
	_{x}^{C}e(x_{0},v)d\mu _{\zeta }(v).  \label{sumclarke}
	\end{equation}%
	Since $e\left( \cdot ,v\right) $ is locally Lipschitzian for all $v\in 
	\mathbb{S}^{m-1}$, it follows from \cite[Theorem 3.57]{mordu} and from
	Proposition \ref{mordexplic} that 
	\begin{eqnarray*}
		\partial _{x}^{C}e(x_{0},v) &=&\overline{\mathrm{co}}\,\left\{\partial
		_{x}^{M}e(x_{0},v)\right\} \\
		&=&-\mathrm{co\,}\left\{ \bigcup_{i\in T(v)} \frac{\chi \left( \rho \left( x_{0},v\right)
			\right)\nabla _{x}g_{i}\left( x_0,\rho \left(
		x_{0},v\right) Lv\right) }{\left\langle \nabla _{z}g_{i}\left( x_0,\rho \left( x_{0},v\right)
			Lv\right) ,Lv\right\rangle } \right\}.
	\end{eqnarray*}%
	Hence, the first term on the right-hand side of (\ref{sumclarke}) coincides
	with the integral term in the asserted formula above. As for the second
	term, observe that $\partial _{x}^{C}e(x_{0},v)\subseteq \mathbb{B}^*%
	_{R}\left( 0\right) $ for some $R>0$ by Theorem \ref{domin}, which yields
	the second term in the upper estimate of this theorem.
\end{proof}

\bigskip

\noindent From Theorem \ref{clarkeappli} and Proposition \ref{strictdiff},
we immediately derive the following:

\begin{corollary}
	If in the setting of Theorem {\rm\ref{clarkeappli}} one has that $\mu _{\zeta
	}(I(x_{0}))=0$ {\rm(}in particular, under assumption {\rm(\ref{bounded})}{\rm)}, or the constant $R$ in Theorem \ref{clarkeappli} is zero, then
	\begin{equation*}
		\partial ^{C}\varphi (x_{0})\subseteq -\int\limits_{v\in \mathbb{S}^{m-1}}%
		\mathrm{co\,}\left\{ \bigcup_{i\in T(v)}\frac{\chi \left( \rho \left( x_{0},v\right) \right) \nabla _{x}g_{i}\left( x_0,\rho \left( x_{0},v\right)
		Lv\right)}{%
			\left\langle \nabla _{z}g_{i}\left( x_0,\rho \left( x_{0},v\right) Lv\right)
			,Lv\right\rangle } \right\} d\mu _{\zeta }(v).
	\end{equation*}
	If, in addition, for $\mu _{\zeta }$-a.e.$\,v\in \mathbb{S}^{m-1}$ we have
	that $\#T(v)=1$ (say: $T(v)=\{i^{\ast }(v)\}$), then the probability
	function {\rm(\ref{probfunc2})} is strictly differentiable with gradient%
	\begin{equation*}
		\nabla \varphi (x_{0})=-\int\limits_{v\in \mathbb{S}^{m-1}}\mathrm{\,}\frac{%
			\chi \left( \rho \left( x_{0},v\right) \right)\nabla _{x}g_{i^{\ast }(v)}\left( x_0,\rho \left(
		x_{0},v\right) Lv\right) }{\left\langle \nabla
			_{z}g_{i^{\ast }(v)}\left( x_0,\rho \left( x_{0},v\right) Lv\right)
			,Lv\right\rangle } d\mu _{\zeta }(v).
	\end{equation*}
\end{corollary}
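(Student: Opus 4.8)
The plan is to specialize Theorem \ref{clarkeappli} under the two successive additional hypotheses. First I would dispose of the error term: if $\mu_\zeta(I(x_0))=0$ (which holds automatically under (\ref{bounded}) because then $I(x_0)=\emptyset$, as noted in the proof of Corollary \ref{uniformlipschitz}), then the second summand $\mu_\zeta(I(x_0))\,\mathbb{B}^*_R(0)$ in Theorem \ref{clarkeappli} reduces to $\{0\}$; alternatively, if $R=0$, the ball $\mathbb{B}^*_R(0)=\{0\}$ directly, so again the summand vanishes. In either case the estimate from Theorem \ref{clarkeappli} becomes
\begin{equation*}
\partial^C\varphi(x_0)\subseteq -\int\limits_{v\in F(x_0)}\mathrm{co\,}\left\{\bigcup_{i\in T(v)}\frac{\chi(\rho(x_0,v))\,\nabla_xg_i(x_0,\rho(x_0,v)Lv)}{\langle\nabla_zg_i(x_0,\rho(x_0,v)Lv),Lv\rangle}\right\}d\mu_\zeta(v).
\end{equation*}
Then I would observe that the integrand is arbitrary on the $\mu_\zeta$-null set $I(x_0)$, so the domain of integration may be enlarged from $F(x_0)$ to all of $\mathbb{S}^{m-1}$ without changing the value of the integral; this gives the first displayed inclusion of the Corollary.

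For the second part, suppose additionally that $\#T(v)=1$ for $\mu_\zeta$-a.e.\ $v$, say $T(v)=\{i^*(v)\}$. Then for almost every $v\in F(x_0)$ the convex hull in the integrand collapses to a single point, namely
\begin{equation*}
\mathrm{co\,}\left\{\frac{\chi(\rho(x_0,v))\,\nabla_xg_{i^*(v)}(x_0,\rho(x_0,v)Lv)}{\langle\nabla_zg_{i^*(v)}(x_0,\rho(x_0,v)Lv),Lv\rangle}\right\}=\left\{\frac{\chi(\rho(x_0,v))\,\nabla_xg_{i^*(v)}(x_0,\rho(x_0,v)Lv)}{\langle\nabla_zg_{i^*(v)}(x_0,\rho(x_0,v)Lv),Lv\rangle}\right\},
\end{equation*}
so that $\partial^C_xe(x_0,v)$ is a singleton for $\mu_\zeta$-a.e.\ $v$. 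This is exactly condition (\ref{singlevalued}) of Proposition \ref{strictdiff}, whose remaining hypotheses are those of Theorem \ref{lipprobfunc} and hence are in force (we already verified $C_l(x_0)=X$ inside the proof of Theorem \ref{clarkeappli}, so (\ref{allnice}) holds). Applying Proposition \ref{strictdiff} yields that $\varphi$ is strictly differentiable at $x_0$ with $\nabla\varphi(x_0)=\int_{\mathbb{S}^{m-1}}\nabla_xe(x_0,v)\,d\mu_\zeta(v)$, and substituting the explicit single-valued expression for $\nabla_xe(x_0,v)=-\chi(\rho(x_0,v))\nabla_xg_{i^*(v)}(x_0,\rho(x_0,v)Lv)/\langle\nabla_zg_{i^*(v)}(x_0,\rho(x_0,v)Lv),Lv\rangle$ (which is $\nabla_x\rho_{i^*(v)}$ scaled by $\chi$, via (\ref{gradmin})) gives the claimed gradient formula.

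The only point requiring a little care — and the closest thing to an obstacle — is the measurability and integrability needed to invoke Proposition \ref{strictdiff} cleanly: one must know that $v\mapsto\nabla_xe(x_0,v)$ is a measurable, integrably bounded selection, so that the Aumann integral genuinely reduces to an ordinary Bochner integral of a single-valued map. Measurability follows from the continuity of $e$ together with the explicit formula (the index $i^*(v)$ is measurable since $T(v)$ depends measurably on $v$ through the continuous functions $\rho_i$), and integrable boundedness is inherited from the uniform local Lipschitz bound of Corollary \ref{uniformlipschitz} (equivalently from Theorem \ref{domin} with $C_l^*(x_0)=\{0\}$). Everything else is the routine collapse of convex hulls of singletons and the restriction of integration domains modulo null sets, so no further difficulty is expected.
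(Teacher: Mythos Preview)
Your proposal is correct and follows exactly the route the paper indicates: the paper gives no explicit proof, merely stating that the corollary is derived ``immediately'' from Theorem \ref{clarkeappli} and Proposition \ref{strictdiff}, which is precisely the two-step argument you spell out. Your extra remarks on measurability and integrable boundedness are a reasonable elaboration of points the paper leaves implicit; the only cosmetic wrinkle is that in the $R=0$ case your sentence about $I(x_0)$ being a $\mu_\zeta$-null set need not apply, but there the extension of the integration domain is harmless under the convention that the integrand vanishes on $I(x_0)$ (consistent with $\partial_x^C e(x_0,v)=\{0\}$ there), which the paper's statement tacitly adopts as well.
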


\begin{remark}
	It is worth mentioning that  under the strengthened (compared with (\ref{growth})) growth condition%
	\begin{equation*}
		\exists l>0:\left\Vert \nabla _{x}g_{i}(x,z)\right\Vert \leq l
		e^{\left\Vert z\right\Vert }\quad \forall x\in \mathbb{B}_{1/l}\left(
		x_{0}\right), \,\,\left\Vert z\right\Vert \geq l,\,\,
		i=1,\ldots ,p
	\end{equation*}%
	the constant $R$ in Theorem \ref{clarkeappli} and Corollary above is zero,  as it can be seen in  (\ref{ConstantR}) (see also \cite[Theorem 3.6 and Theorem 4.1]{ackooij2}). 
\end{remark}

\section{Appendix}

We verify in this Appendix properties 1.-6. in Example \ref{illuex}.

The continuous differentiability of $g$ stated in 1. is obvious from the
corresponding property of $\alpha $ and $h$. For $h$, this relies on the
smoothness of the distribution function of the one-dimensional standard
normal distribution $\Phi $ and on the fact that the argument $1-\Phi (t)$
of the logarithm is always strictly positive. 

By nonnegativity of $\alpha $
it is sufficient to check that $e^{h\left( t\right) }$ is convex in order to
verify 2. To do so, it is sufficient to show that $h$ itself is convex,
which by definition would follow from the concavity of $\log \left( 1-\Phi
(t)\right) $. This, however, is a consequence of $\log \Phi $ being concave,
which in turn implies that $\log \left( 1-\Phi \right) $ is concave (see 
\cite[Theorem 4.2.4]{prek}). 

Statement 3. follows immediately from the
definition of the functions.

As for 4., observe first that, by continuous differentiability of $g$, 
\begin{equation*}
	g^{\circ}(\cdot ,z)(x;-1)=\nabla _{x}g\left( x,z_{1},z_{2}\right) \cdot \left(
	-1\right) =-\alpha ^{\prime }(x)e^{h\left( z_{1}\right) }\leq 0\quad \forall
	x,z_{1},z_{2}\in \mathbb{R},
\end{equation*}%
whence $-1\in C_{1}(0)$ by Definition \ref{nicedirect}. On the other hand,
putting $x:=1$ and $z:=(1,0)$, we have that $x\in \mathbb{B}_{1}\left(
0\right) $, $\left\Vert z\right\Vert =1$ and%
\begin{equation*}
	g^{\circ}(\cdot ,z)(x;1)=\nabla _{x}g\left( 1,1,0\right) \cdot 1=\alpha
	^{\prime }(1)e^{h\left( 1\right) }=2e^{h\left( 1\right) }\approx 1161,
\end{equation*}%
whereas, due to $m=2$ in this example,%
\begin{equation*}
	\left\Vert z\right\Vert ^{-m}e^{\frac{\left\Vert z\right\Vert^2}{2\Vert L\Vert^2} }=\sqrt{e}%
	\approx 1.65.
\end{equation*}%
Therefore, by Definition \ref{nicedirect}, $1\notin C_{1}(0)$. Since $%
C_{1}(0)$ is a closed cone, this together with $-1\in C_{1}(0)$ yields $%
C_{1}(0)=\left( -\infty ,0\right] $.

For proving 5., it is sufficient to show that%
\begin{equation}
\partial _{x}^{M}e(0,v)\subseteq \left( -\infty ,0\right] \quad \forall v\in 
\mathbb{S}^{1}.  \label{inclumordu}
\end{equation}%
In order to calculate $\partial _{x}^{M}e(0,v)$ for an arbitrarily fixed $%
v\in \mathbb{S}^{1}$, we have to compute first the partial Fr\'{e}chet
subdifferentials $\partial _{x}^{F}e(x,v)$ for $x$ in a neighborhood $U$ of 
$0$. Define $U$ such that $g(x,0,0)<0$ for all $x\in U$ (as a consequence of
the already shown relation $g(0,0,0)<0$). If $x<0$, then, by definition of $%
e $ and $g$, 
\begin{equation*}
	e(x,v)=\mu _{\eta }\left( \left\{ r\geq 0\ |\ g\left( x,rLv\right) \leq
	0\right\} \right) =\mu _{\eta }\left( \left\{ r\geq 0\ |\ rLv_{2}\leq 1\right\}
	\right) .
\end{equation*}%
Hence, for $x<0$, $e(x,v)$ does not depend on its first argument locally
around $x$. Therefore, $\partial _{x}^{F}e(x,v)=\{0\}$ for all $x<0$. Now,
consider some $x\in U$ with $x\geq 0$ and $x^{\ast }\in \partial
_{x}^{F}e(x,v)$. If $v\in I(x)$, then $\partial _{x}^{F}e(x,v)\subseteq
\left\{ 0\right\} $ (see Corollary \ref{loclipe}(ii)). If, in contrast, $v\in F(x)$,
then, by Theorem \ref{chainrule} (putting $w:=\pm 1$ there and observing
that, by continuous differentiability of $g$, the partial Clarke
subdifferentials reduce to partial gradients), 
\begin{eqnarray*}
	x^{\ast } &=&\frac{-\chi \left( \rho (x,v)\right) \nabla _{x}g\left(
	x,\rho (x,v)Lv\right) }{\left\langle \nabla
		_{z}g\left( x,\rho (x,v)Lv\right) ,Lv\right\rangle }=\frac{-2xe^{h\left( \rho (x,v)v_{1}\right) }\chi
		\left( \rho (x,v)\right) }{\left\langle \nabla _{z}g\left( x,\rho
		(x,v)Lv\right) ,Lv\right\rangle }\leq 0. 
\end{eqnarray*}%
Here, the inequality relies on $x\geq 0$, on $\chi $ being positive as a
density and on 
\begin{equation*}
	\left\langle \nabla _{z}g\left( x,\rho (x,v)Lv\right) ,Lv\right\rangle \geq 
	\frac{-g(x,0,0)}{\rho (x,v)}>0
\end{equation*}%
by Lemma \ref{lowest}. Altogether, we have shown that $\partial _{x}^{F}e(x,v)\subseteq
\left( -\infty ,0\right] $ for all $x\in U$. This entails that also $%
\partial _{x}^{M}e(x,0)\subseteq \left( -\infty ,0\right] $. Since $v\in 
\mathbb{S}^{1}$ has been fixed arbitrarily, the desired relation (\ref%
{inclumordu}) follows.

In order to show 6. we provide first a formula for the probability function $%
\varphi $. If $t\leq 0$, then, by definition of $g$, 
\begin{equation*}
	\varphi (t)=\mathbb{P}\left( g\left( x,\xi _{1},\xi _{2}\right) \leq
	0\right) =\mathbb{P}\left( \xi _{2}\leq 1\right) =\Phi (1)
\end{equation*}%
because $\xi _{2}\sim \mathcal{N}\left( 0,1\right) $ by the distribution
assumption on $\xi $ in Example \ref{illuex}. If $t>0$, then, again by the
assumed distribution of $\xi $, 
\begin{eqnarray*}
	\varphi (t) &=&\mathbb{P}\left( \xi _{2}\leq 1-t^{2}e^{h\left( \xi
		_{1}\right) }\right) =\frac{1}{2\pi }\int\limits_{-\infty }^{\infty }\left(
	\int\limits_{-\infty }^{1-t^{2}e^{h\left( z_{1}\right) }}e^{-\left(
		z_{1}^{2}+z_{2}^{2}\right) /2}dz_{2}\right) dz_{1} \\
	&=&\frac{1}{\sqrt{2\pi }}\int\limits_{-\infty }^{\infty
	}e^{-z_{1}^{2}/2}\cdot \frac{1}{\sqrt{2\pi }}\left( \int\limits_{-\infty
	}^{1-t^{2}e^{h\left( z_{1}\right) }}e^{-z_{2}^{2}/2}dz_{2}\right) dz_{1} \\
	&=&\frac{1}{\sqrt{2\pi }}\int\limits_{-\infty }^{\infty }e^{-s^{2}/2}\cdot
	\Phi \left( 1-t^{2}e^{h\left( s\right) }\right) ds.
\end{eqnarray*}%
Now, we are going to show that $\varphi $ fails to be locally Lipschitz
around $0$. Observe first that, since $ \Phi$ is increasing as a distribution
function, $h$ is increasing too by its definition. Then, for any $s,t$
satisfying $s\geq \Phi ^{-1}\left( 1-\sqrt{t}\right) $ (recall that $\Phi $
is strictly increasing and so its inverse exists) it holds that%
\begin{equation*}
	h(s)\geq h\left( \Phi ^{-1}\left( 1-\sqrt{t}\right) \right) =-1-\log t^{2}.
\end{equation*}%
Therefore, $t^{2}e^{h(s)}\geq e^{-1}$. Thus, we have shown that%
\begin{equation*}
	\Phi \left( 1\right) -\Phi \left( 1-t^{2}e^{h(s)}\right) \geq \Phi \left(
	1\right) -\Phi \left( 1-e^{-1}\right) =:\varepsilon \quad \forall s,t:s\geq
	\Phi ^{-1}\left( 1-\sqrt{t}\right) .
\end{equation*}%
With $\Phi $ being strictly increasing, we have that $\varepsilon >0$. Now,
for any $t>0$, we calculate%
\begin{eqnarray*}
	\varphi (0)-\varphi (t) &=&\Phi (1)-\frac{1}{\sqrt{2\pi }}%
	\int\limits_{-\infty }^{\infty }e^{-s^{2}/2}\cdot \Phi \left(
	1-t^{2}e^{h\left( s\right) }\right) ds \\
	&=&\frac{1}{\sqrt{2\pi }}\int\limits_{-\infty }^{\infty }e^{-s^{2}/2}\cdot
	\left( \Phi (1)-\Phi \left( 1-t^{2}e^{h\left( s\right) }\right) \right) ds \\
	&\geq &\varepsilon \frac{1}{\sqrt{2\pi }}\int\limits_{\Phi ^{-1}\left( 1-%
		\sqrt{t}\right) }^{\infty }e^{-s^{2}/2}ds=\varepsilon \left( 1-\frac{1}{%
		\sqrt{2\pi }}\int\limits_{-\infty }^{\Phi ^{-1}\left( 1-\sqrt{t}\right)
	}e^{-s^{2}/2}ds\right) \\
	&=&\varepsilon \left( 1-\Phi \left( \Phi ^{-1}\left( 1-\sqrt{t}\right)
	\right) \right) =\varepsilon \sqrt{t}.
\end{eqnarray*}%
Since $\varepsilon >0$, $\varphi $ fails to be locally Lipschitz around $0$,
which finally shows 6.

\end{document}